\documentclass[11pt]{amsart}

\usepackage[utf8x]{inputenc}
\usepackage[a4paper]{geometry}
\usepackage{graphicx}
\usepackage[all,cmtip]{xy}
\usepackage{amssymb,amsfonts,latexsym}
\usepackage{enumerate}
\usepackage{multirow}
\usepackage{hyperref} \hypersetup{colorlinks=true, linktoc=page, pdfauthor={Giovanni Stagliano'}, pdftitle={On special quadratic birational transformations} }
\usepackage{mathptmx}

\newcommand{\sS}{{\mathbf{S}}}  % hypersurface image
\newcommand{\B}{{\mathfrak{B}}}  % base locus
\newcommand{\I}{{\mathcal{I}}}   % ideal sheaf
 % hilbert scheme
\renewcommand{\O}{{\mathcal{O}}} % structural sheaf
\newcommand{\T}{{\mathcal{T}}} % tangent bundle
\newcommand{\E}{{\mathcal{E}}} % vector bundle
\newcommand{\N}{{\mathcal{N}}} % normal bundle
\newcommand{\CC}{{\mathbb{C}}}   % complex numbers
\newcommand{\FF}{\mathbb{F}}    % rational ruled surface
\newcommand{\GG}{{\mathbb{G}}}   % grassmannian
\newcommand{\ZZ}{{\mathbb{Z}}}   % integers
\newcommand{\PP}{{\mathbb{P}}}   % projective space
\newcommand{\Sec}{{\mathrm{Sec}}} % secant variety
\newcommand{\Bl}{{\mathrm{Bl}}}   % blow-up
\newcommand{\reg}{{\mathrm{reg}}}  %regular locus
\newcommand{\sing}{{\mathrm{sing}}} % singular locus
\newcommand{\Pic}{{\mathrm{Pic}}} % picard group 

\makeatletter

\newcommand{\Rmnum}[1]{\expandafter\@slowromancap\romannumeral #1@}
\makeatother

\theoremstyle{plain}
\newtheorem{proposition}{Proposition}[section]
\newtheorem{theorem}[proposition]{Theorem}
\newtheorem{lemma}[proposition]{Lemma}
\newtheorem{corollary}[proposition]{Corollary}
\newtheorem{fact}[proposition]{Fact}

\theoremstyle{definition}

\newtheorem{assumption}[proposition]{Assumption}

\newtheorem{example}[proposition]{Example}

\theoremstyle{remark}
\newtheorem{remark}[proposition]{Remark}
\newtheorem{claim}{Claim}[proposition]
\newtheorem{case}{Case}[proposition]

\numberwithin{equation}{section}

\title[On special quadratic birational transformations]{On special quadratic birational transformations whose base locus has dimension at most three}
\author[G. Staglian\`o]{Giovanni Staglian\`o} 
\address{Giovanni Staglian\`o -
         Ph.D. in Mathematics (\Rmnum{23} cycle) -  University of Study of Catania 
%        Tel.: +39-3274593602 \\
         } 
\email{\href{mailto:gstagliano@dmi.unict.it}{gstagliano@dmi.unict.it}}
\date{\today} 
\keywords{Birational transformation, quadratic form, base locus.}
\subjclass[2010]{14E05, % Rational and birational maps 
                 14J30  % $3$-folds                  
                 }
\begin{document}
\begin{abstract}
We study birational transformations  
$\varphi:\PP^n\dashrightarrow\overline{\varphi(\PP^n)}\subseteq\PP^N$
defined by linear systems of quadrics
 whose base locus is 
smooth and irreducible of dimension $\leq3$ and whose 
  image $\overline{\varphi(\PP^n)}$ is sufficiently regular.
\end{abstract}
\maketitle
\setcounter{secnumdepth}{1}
\setcounter{tocdepth}{1}
\tableofcontents
\section*{Introduction}\label{sec: introduction} 
In this note we continue the study of special quadratic birational transformations 
$\varphi:\PP^n\dashrightarrow\sS:=\overline{\varphi(\PP^n)}\subseteq\PP^{N}$
started in \cite{note}, 
by reinterpreting techniques and well-known results on 
special Cremona transformations 
(see \cite{crauder-katz-1989}, 
     \cite{crauder-katz-1991}, 
     \cite{ein-shepherdbarron} and
     \cite{hulek-katz-schreyer}).
While in \cite{note} we required that $\sS$ was a hypersurface,
here we allow more freedom in the choice of $\sS$, 
but we only treat the case in which 
the dimension of the base 
locus $\B$ is $r=\dim(\B)\leq3$.
In the last section, we shall also obtain partial results
 in the case $r=4$.

Note that for every closed subscheme $X\subset\PP^{n-1}$ 
cut out by the quadrics containing it, we can 
consider $\PP^{n-1}$ as a hyperplane  in $\PP^n$
and hence $X$ as a subscheme of $\PP^n$. So 
the linear system $|\I_{X,\PP^n}(2)|$ of all quadrics in $\PP^n$
containing $X$ defines a quadratic rational map
 $\psi:\PP^n\dashrightarrow\PP^N$ 
($N=h^0(\I_{X,\PP^n}(2))-1=n+h^0(\I_{X,\PP^{n-1}}(2))$),
which is birational onto the image 
and whose inverse is defined by linear forms, 
i.e. $\psi$ is of type $(2,1)$.
Conversely, every birational transformation 
$\psi:\PP^n\dashrightarrow\overline{\psi(\PP^n)}\subseteq\PP^N$
of type $(2,1)$ whose image is nondegenerate, normal and linearly normal 
arise in this way.
From this it follows that there are many (special) quadratic transformations. 
However, 
when the image $\sS$ of the 
 transformation $\varphi$
 is sufficiently regular,
by straightforward generalization of 
\cite[Proposition~2.3]{ein-shepherdbarron}, 
we obtain  
strong numerical and geometric restrictions on the base locus $\B$. 
For example, as soon as $\sS$ is not too much singular,
 the secant variety $\Sec(\B)\subset\PP^n$ has to be a hypersurface and $\B$
has to be a  $QEL$-variety of type $\delta=\delta(\B)=2\dim(\B)+2-n$; in particular 
$n\leq 2\dim(\B)+2$ and $\Sec(\B)$ is a hyperplane if and only if 
 $\varphi$ is of type $(2,1)$.
So the classification of 
 transformations $\varphi$ of type $(2,1)$ whose base locus 
has dimension $\leq 3$ 
essentially follows  
from classification results on
 $QEL$-manifold:
\cite[Propositions~1.3 and 3.4]{russo-qel1}, 
\cite[Theorem~2.2]{ionescu-russo-conicconnected} and
\cite[Theorems~4.10 and 7.1]{ciliberto-mella-russo}.

When $\varphi$ is of type $(2,d)$ with $d\geq2$, then $\Sec(\B)$ 
is a nonlinear hypersurface 
and it is not so easy to exhibit examples. 
The most difficult cases of this kind 
are those for which $n=2r+2$ i.e. $\delta=0$.
In order to classify these transformations, we first determine the Hilbert
polynomial of $\B$ in Lemmas \ref{lemma: r=2 B nondegenerate} and 
\ref{lemma: r=3 B nondegenerate}, 
by using  
the usual Castelnuovo's argument, Castelnuovo's bound and 
some refinement of Castelnuovo's bound, 
see \cite{ciliberto-hilbertfunctions} and \cite{mella-russo-baselocusleq3}.
Consequently we deduce
 Propositions \ref{prop: r=2 B nondegenerate}
and \ref{prop: r=3 B nondegenerate} by applying the classification 
of smooth varieties of low degree: 
\cite{ionescu-smallinvariants},
\cite{ionescu-smallinvariantsII},
\cite{ionescu-smallinvariantsIII},
\cite{fania-livorni-nine},
\cite{fania-livorni-ten},
\cite{besana-biancofiore-deg11},
\cite{ionescu-degsmallrespectcodim}.
We also apply the 
double point formula in Lemmas:
\ref{lemma: double point formula r=2},
\ref{lemma: double point formula}, 
\ref{lemma: quadric fibration},
\ref{lemma: scroll over surface} and
\ref{lemma: scroll over curve},
 in order to obtain  additional informations on
 $d$ and $\Delta=\deg(\sS)$.  

We summarize our classification 
results in Table \ref{tabella: all cases 3-fold}.
In particular, we provide 
an answer to a question left open
 in the recent preprint \cite{alzati-sierra}. 
\section{Notation and general results}\label{sec: notation}
Throughout the paper we work over $\CC$ and keep the following setting.
\begin{assumption}
Let $\varphi:\PP^n\dashrightarrow\sS:=\overline{\varphi(\PP^n)}\subseteq\PP^{n+a}$ be
a quadratic birational transformation 
with smooth connected base locus $\B$  
and with  $\sS$ nondegenerate, 
linearly normal and factorial.
\end{assumption}
Recall that we can resolve the
 indeterminacies of
 $\varphi$ with the diagram 
\begin{equation}\label{eq: diagram resolving map} 
\xymatrix{ & \widetilde{\PP^n} \ar[dl]_{\pi} \ar[dr]^{\pi'}\\ \PP^n\ar@{-->}[rr]^{\varphi}& & \sS } 
\end{equation}
where $\pi:\widetilde{\PP^n}=\Bl_{\B}(\PP^n)\rightarrow\PP^n$ is the blow-up of
 $\PP^n$ along $\B$ and  
$\pi'=\varphi\circ\pi:\widetilde{\PP^n}\rightarrow\sS$.   
Denote by $\B'$ the base locus of $\varphi^{-1}$, 
 $E$ the exceptional divisor of $\pi$, 
 $E'=\pi'^{-1}(\B')$, 
$H=\pi^{\ast}(H_{\PP^n})$,
$H'={\pi'}^{\ast}(H_{\sS})$, and note that, since
$\pi'|_{\widetilde{\PP^n}\setminus E'}:\widetilde{\PP^n}\setminus E'\rightarrow \sS\setminus\B'$
is an isomorphism,
we have $(\sing(\sS))_{\mathrm{red}}\subseteq (\B')_{\mathrm{red}}$.
We also put 
$r=\dim(\B)$, 
$r'=\dim(\B')$,
$\lambda=\deg(\B)$, 
$g=g(\B)$ the sectional genus of $\B$,
$c_j=c_j(\T_{\B})\cdot H_{\B}^{r-j}$ (resp. $s_j=s_j(\N_{\B,\PP^n})\cdot H_{\B}^{r-j}$) 
the degree of the $j$-th Chern class (resp. Segre class) of $\B$, 
$\Delta=\deg(\sS)$,
$c=c(\sS)$ the \emph{coindex} of $\sS$ 
(the last of which is defined 
by $-K_{\reg(\sS)}\sim (n+1-c)H_{\reg(\sS)}$, 
whenever $\Pic(\sS)=\ZZ\langle H_{\sS}\rangle$).
\begin{assumption}\label{assumption: liftable}
 We suppose that  there exists a rational map 
$\widehat{\varphi}:\PP^{n+a}\dashrightarrow\PP^n$ 
defined by a sublinear system of $|\O_{\PP^{n+a}}(d)|$ and having 
base locus $\widehat{\B}$ 
such that $\varphi^{-1}=\widehat{\varphi}|_{\sS}$ and $\B'=\widehat{\B}\cap\sS$.
We then will say that $\varphi^{-1}$ is \emph{liftable} and 
that $\varphi$ is \emph{of type} $(2,d)$.
\end{assumption}
The above assumption yields the relations:
\begin{equation}\label{eq: lift}
\begin{array}{ll}
 H' \sim  2H-E,   &  H  \sim  dH'-E',  \\
 E'\sim  (2d-1)H-dE, & E \sim  (2d-1)H'-2E' ,
\end{array}
\end{equation}
and hence also
$ \Pic(\widetilde{\PP^n})\simeq \ZZ\langle H \rangle\oplus \ZZ\langle E \rangle
\simeq \ZZ\langle H'\rangle\oplus \ZZ\langle E'\rangle $.
Note that,  by the proofs of
\cite[Proposition~1.3 and 2.1(a)]{ein-shepherdbarron} and
by factoriality of $\sS$, we obtain that $E'$ 
is a reduced and irreducible divisor.
Moreover we have
$\Pic(\sS)\simeq \Pic(\sS\setminus\B')\simeq \Pic(\widetilde{\PP^n}\setminus E')
\simeq \ZZ\langle H'\rangle\simeq \ZZ\langle H_{\sS}\rangle$.
Finally, we require the following:\footnote{See Example 
\ref{example: B2=singSred} and  \cite[Example~4.6]{note} 
 for explicit examples of  special quadratic birational transformations
for which Assumption \ref{assumption: ipotesi} is not satisfied.}
\begin{assumption}\label{assumption: ipotesi}
$(\sing(\sS))_{\mathrm{red}}\neq (\B')_{\mathrm{red}}$.
\end{assumption}
Now we point out that, since $E'$ is irreducible, 
by Assumption \ref{assumption: ipotesi} and \cite[Theorem~1.1]{ein-shepherdbarron},
 we deduce that 
$\pi'|_V:V\rightarrow U$ coincides with the blow-up of $U$ along $Z$,
where $U=\reg(\sS)\setminus\sing((\B')_{\mathrm{red}})$, 
$V=\pi'^{-1}(U)$ and  $Z=U\cap (\B')_{\mathrm{red}}$.
It follows that
$K_{\widetilde{\PP^n}} \sim (-n-1)H+(n-r-1)E \sim (c-n-1)H'+(n-r'-1)E'$, 
from which, together with (\ref{eq: lift}), we obtain
$2r+3-n=n-r'-1$ and 
$c=\left( 1-2d\right) r+dn-3d+2$.
One can also easily see that,
for the general point 
$x\in\Sec(\B)\setminus \B$,  
$\overline{\varphi^{-1}\left(\varphi\left(x\right)\right)}$
is a linear space of dimension $n-r'-1$   and
$\overline{\varphi^{-1}\left(\varphi\left(x\right)\right)}\cap \B$ 
is a quadric hypersurface,  which coincides with 
the entry locus $\Sigma_{x}(\B)$ of $\B$ with respect to $x$.
For more details we refer the reader to
\cite[Proposition~2.3]{ein-shepherdbarron} and \cite[Proposition~3.1]{note}. 
So we can establish one of the main results useful for our purposes:
\begin{proposition}\label{prop: B is QEL}
 $\Sec(\B)\subset\PP^n$ is a hypersurface of degree $2d-1$ and 
       $\B$ is a $QEL$-variety of type $\delta=2r+2-n$.
\end{proposition}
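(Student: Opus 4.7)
The plan is to extract the proposition from three ingredients already laid out in the introduction and the preceding paragraphs: (i) the divisor class relation $E'\sim(2d-1)H-dE$; (ii) the description of the general fiber of $\pi'$ over a point of $\B'$ as a linear $\PP^{n-r'-1}$ meeting $\B$ in a quadric; (iii) the numerical identity $2r+3-n=n-r'-1$.

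First I would identify $\pi(E')$ with $\Sec(\B)$. Since $E'$ is a reduced irreducible divisor on $\widetilde{\PP^n}$ and $\pi$ is a blow-up along a subvariety of codimension $\geq 2$, the image $S:=\pi(E')\subset\PP^n$ is a hypersurface; writing $S$ as having degree $e$ and multiplicity $m$ along $\B$ gives $\pi^*S=eH-mE$, and since $E'\sim eH-mE$ must agree (as divisor classes on $\widetilde{\PP^n}$) with $(2d-1)H-dE$ by (\ref{eq: lift}), we obtain $e=2d-1$ and $m=d$. Thus $\pi(E')$ is a hypersurface of degree $2d-1$.

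Next I would verify $\pi(E')=\Sec(\B)$ by a pair of opposite inclusions. For a general secant line $L$ of $\B$ meeting $\B$ in two distinct points $p_1,p_2$, every quadric in the linear system $|\I_{\B,\PP^n}(2)|$ defining $\varphi$ restricts to $L\simeq\PP^1$ as a degree-$2$ form vanishing at $p_1,p_2$, hence the restrictions are all proportional; consequently $\varphi$ contracts $L\setminus\{p_1,p_2\}$ to a point, forcing $L\subseteq\pi(E')$ and so $\Sec(\B)\subseteq\pi(E')$. Conversely, the general fiber of $\pi'$ over a point $y\in\B'$ is (by the blow-up description of $\pi'|_V$ recalled just before the proposition) a linear $\PP^{n-r'-1}$ whose $\pi$-image meets $\B$ in a quadric hypersurface; since a quadric contains many pairs of points and the ambient space is linear, this image lies in $\Sec(\B)$, giving $\pi(E')\subseteq\Sec(\B)$. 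Together with the divisor-class computation this proves the first assertion.

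Finally I would read off the $QEL$ structure. The previous step shows that for the general $x\in\Sec(\B)\setminus\B$, the closure $\overline{\varphi^{-1}(\varphi(x))}$ is a linear space of dimension $n-r'-1$ cutting $\B$ in the entry locus $\Sigma_x(\B)$, which is a quadric hypersurface; this is precisely the definition of a $QEL$-variety with $\delta=\dim\Sigma_x(\B)=n-r'-2$. Substituting the already established relation $r'=2n-2r-4$ gives $\delta=2r+2-n$, completing the proof. The main obstacle is the identification $\pi(E')=\Sec(\B)$; once this is in hand, the numerical consequences follow mechanically.
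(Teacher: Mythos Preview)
Your argument is correct and follows essentially the same route the paper sketches in the paragraph preceding the proposition (and defers to \cite[Proposition~2.3]{ein-shepherdbarron} and \cite[Proposition~3.1]{note}): identify $\pi(E')$ with $\Sec(\B)$ via the class relation $E'\sim(2d-1)H-dE$, and read off the $QEL$ structure from the linear fibres meeting $\B$ in quadrics together with $2r+3-n=n-r'-1$. One notational slip: what you write as ``$\pi^{*}S=eH-mE$'' is the \emph{strict transform} of $S$, not the total pullback (which would be $eH$); the conclusion $e=2d-1$ is unaffected.
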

In many cases, $\B$ has a much stronger property of being $QEL$-variety.
Recall that a subscheme $X\subset\PP^n$ is said to have the $K_2$ property if 
$X$ is cut out by quadratic forms $F_0,\ldots,F_N$ 
such that the  Koszul relations among the $F_i$ are generated by linear syzygies.
We have the following fact (see \cite{vermeire} and \cite{alzati-syz}):
\begin{fact}\label{fact: K2 property}
Let $X\subset\PP^n$ be 
 a smooth  variety 
cut out by quadratic forms $F_0,\ldots,F_N$
satisfying $K_2$ property and let 
$F=[F_0,\ldots,F_N]:\PP^n\dashrightarrow\PP^N$ be the 
induced rational map. Then 
for every $x\in\PP^n\setminus X$, 
$\overline{F^{-1}\left(F\left(x\right)\right)}$ is 
a linear space of dimension 
$n+1-\mathrm{rank}\left(\left({\partial F_i}/{\partial x_j}(x)\right)_{i,j}\right)$;
moreover,
$\dim(\overline{F^{-1}\left(F\left(x\right)\right)})>0$ 
if and only if $x\in\Sec(X)\setminus X$ and in this case  
$\overline{F^{-1}\left(F\left(x\right)\right)}\cap X$ is a quadric hypersurface,
which coincides with 
the entry locus $\Sigma_{x}(X)$ of $X$ with respect to $x$.
\end{fact}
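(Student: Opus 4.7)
The plan is to realize the fiber via a collection of explicit quadratic equations, then use the $K_2$ hypothesis to rewrite these quadrics as combinations of linear forms, thereby identifying $\overline{F^{-1}(F(x))}$ with an explicit linear subspace $L_x \subseteq \PP^n$. The remaining statements on dimension and on the entry locus then follow from a tangent-space computation and from the quadric structure induced on $L_x \cap X$.

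Fix $x \in \PP^n \setminus X$ and consider the $2\times 2$ minors
\[
G_{ij}(y) := F_i(y)\,F_j(x) - F_j(y)\,F_i(x), \qquad 0 \le i < j \le N,
\]
which scheme-theoretically cut out the closure of the fiber outside $X$. Invoke the $K_2$ hypothesis: every trivial Koszul syzygy $F_j e_i - F_i e_j$ admits a decomposition $F_j e_i - F_i e_j = \sum_k A_{ij}^k \sigma_k$, where $\sigma_k = (L_{k0},\ldots,L_{kN})$ is a linear syzygy (so each $L_{kl}$ is a linear form with $\sum_l L_{kl} F_l = 0$) and each $A_{ij}^k$ is linear by degree. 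Contracting this identity against the constant vector $(F_l(x))_l$ yields
\[
G_{ij}(y) = -\sum_k A_{ij}^k(y)\,\Lambda_k(y), \qquad \Lambda_k(y) := \sum_l L_{kl}(y)\,F_l(x),
\]
so every $G_{ij}$ lies in the ideal generated by the linear forms $\Lambda_k$. Set $L_x := V(\{\Lambda_k\}) \subset \PP^n$. The syzygy relation evaluated at $z=x$ gives $\Lambda_k(x) = 0$, so $x \in L_x$ and thus $L_x \subseteq V(\{G_{ij}\})$; since $L_x$ is irreducible and meets $\PP^n \setminus X$, it is contained in $\overline{F^{-1}(F(x))}$. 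Conversely, if $F(y) = F(x)$, then $(F_l(x))_l$ is proportional to $(F_l(y))_l$, and the syzygy relation evaluated at $z = y$ forces $\Lambda_k(y) = 0$. Hence $\overline{F^{-1}(F(x))} = L_x$, a linear subspace of $\PP^n$.

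For the dimension, observe that $L_x$ is smooth at $x$, so $\dim L_x$ equals the dimension of $\ker(dF_x) \subseteq T_x\PP^n$. Euler's identity $J(x)\cdot x = 2F(x)$ forces $F(x) \in \mathrm{Im}(J(x))$, yielding $\mathrm{rank}(dF_x) = \mathrm{rank}(J(x)) - 1$ and hence $\dim L_x = n + 1 - \mathrm{rank}(J(x))$. For the secant/entry-locus statements, note that since $F$ is projectively constant on $L_x \setminus X$, the restrictions $F_i|_{L_x}$ are all scalar multiples of a single quadric $Q$ on $L_x$ (take $Q = F_j|_{L_x}$ for any $j$ with $F_j(x) \neq 0$), so $L_x \cap X = V(Q)$ is a quadric hypersurface of $L_x$, and this is nonempty precisely when $\dim L_x \geq 1$; in that case, for any $p \in L_x \cap X$ the line $\overline{xp} \subset L_x$ meets $X$ in the length-$2$ scheme cut out by the nonzero binary quadric $Q|_{\overline{xp}}$, exhibiting $x \in \Sec(X)$. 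Conversely, any secant or tangent line $\ell$ through $x$ carries pairwise proportional quadrics $F_i|_\ell$ (sharing the two points $\ell \cap X$ as base locus), hence is collapsed by $F$ to $F(x)$ and lies in $L_x$. Finally, $L_x \cap X$ is the entry locus $\Sigma_x(X)$ since by the preceding each of its points lies on a secant or tangent line to $X$ through $x$, and each such line is contained in $L_x$.

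The main obstacle is the syzygy bookkeeping that produces the key identity $G_{ij} = -\sum_k A_{ij}^k \Lambda_k$ by contracting the $K_2$-decomposition of the trivial Koszul syzygy against the evaluation vector $(F_l(x))_l$; once this identity is set up correctly and one checks that $\Lambda_k$ vanishes both at $x$ and on the set-theoretic fiber, the remaining deductions on dimension, secants, and entry loci are essentially formal.
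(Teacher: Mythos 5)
The paper offers no proof of this Fact --- it is quoted directly from \cite{vermeire} and \cite{alzati-syz} --- so there is no internal argument to compare against; what you have written is essentially the standard proof from those references. The core mechanism (contract the $K_2$-decomposition of the Koszul syzygies against the evaluation vector $(F_l(x))_l$ to place the fibre equations $G_{ij}$ inside the ideal of the linear forms $\Lambda_k$, check that each $\Lambda_k$ vanishes at $x$ and on the set-theoretic fibre, and conclude $\overline{F^{-1}(F(x))}=L_x$) is set up correctly, and the secant/entry-locus discussion at the end is sound.

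The one step that does not go through as written is the dimension formula. From ``$L_x$ is smooth at $x$'' you only obtain $T_xL_x=L_x\subseteq\ker(dF_x)$, i.e.\ the inequality $\dim L_x\leq n+1-\mathrm{rank}\,J(x)$: the scheme-theoretic fibre $V(\{G_{ij}\})$ could a priori be non-reduced at $x$, in which case its tangent space $\ker(dF_x)$ would be strictly larger than its reduction $L_x$, and smoothness of the reduction says nothing about this. You need the reverse inclusion $\ker(dF_x)\subseteq L_x$, and the syzygies must enter once more to provide it: differentiating the identity $\sum_l L_{kl}(y)F_l(y)=0$ at $y=x$ in the direction of a vector $\hat v$ gives $\Lambda_k(\hat v)=-\sum_l L_{kl}(\hat x)\,dF_l(\hat x)(\hat v)$; if $J(\hat x)\hat v=c\,\hat F(\hat x)$, which by the Euler identity is exactly the condition describing the affine cone over $\ker(dF_x)$, then the right-hand side equals $-c\sum_l L_{kl}(\hat x)F_l(\hat x)=0$. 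Hence every $\Lambda_k$ vanishes on $\ker(dF_x)$, so $\ker(dF_x)\subseteq L_x$, the fibre is in fact reduced at $x$, and $\dim L_x=n+1-\mathrm{rank}\,J(x)$ as claimed. With this supplement the proof is complete.
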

We have a simple sufficient 
condition for the $K_2$ property (see \cite[Proposition~2]{alzati-russo-subhomaloidal}):
\begin{fact}\label{fact: test K2}
 Let $X\subset\PP^n$ be a smooth linearly normal variety and suppose $h^1(\O_X)=0$
if $\dim(X)\geq2$. Putting $\lambda=\deg(X)$ and $s=\mathrm{codim}_{\PP^n}(X)$ we have:
\begin{itemize}
\item if $\lambda\leq 2s+1$,  then $X$ is arithmetically Cohen-Macaulay;
\item if $\lambda\leq 2s$,  then the homogeneous ideal of $X$ 
is generated by quadratic forms;
\item if $\lambda\leq2s-1$,  then the syzygies of the generators 
of the homogeneous ideal of $X$ are generated by the linear ones.
\end{itemize}
\end{fact}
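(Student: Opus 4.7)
The three conclusions are cohomological and syzygetic in nature, so my plan is to prove them in a unified way via Castelnuovo--Mumford regularity combined with an inductive hyperplane-section argument on $\dim X$.

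For the inductive step, take a general hyperplane $H \subset \PP^n$ and set $Y = X \cap H \subset H \cong \PP^{n-1}$, a smooth nondegenerate irreducible variety of the same degree $\lambda$ and codimension $s$ as $X$. The structure sequence $0 \to \O_X(k-1) \to \O_X(k) \to \O_Y(k) \to 0$, together with linear normality of $X$ and $h^1(\O_X)=0$, propagates the hypotheses (linear normality and $h^1=0$) to $Y$ whenever $\dim Y \geq 2$. The twisted restriction sequence
\begin{equation*}
0 \to \I_X(k-1) \to \I_X(k) \to \I_{Y,H}(k) \to 0
\end{equation*}
then transfers the cohomology vanishings $H^i(\I_X(k)) = 0$ (i.e.\ the arithmetically Cohen-Macaulay property) and the degree bounds on minimal generators and on linear syzygies from $Y$ back to $X$; here one uses the hypothesis $h^1(\O_X)=0$ and linear normality precisely to kill the obstructions at $i=1,2$ in the long exact sequence.

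The base case is $\dim X = 1$, namely a smooth irreducible nondegenerate curve $C \subset \PP^{s+1}$ of degree $\lambda$. The three statements in this case --- projective normality when $\lambda \leq 2s+1$, quadratic generation when $\lambda \leq 2s$, and linear syzygies when $\lambda \leq 2s-1$ --- are classical consequences of Castelnuovo's lemma and its refinements by Gruson--Lazarsfeld--Peskine and Green.

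The main obstacle is to keep track, in the inductive step, not only of the Castelnuovo--Mumford regularity of $\I_X$ but of the \emph{shape} of its minimal free resolution. The cleanest way to manage this is to recast all three conclusions as the vanishing of specific Koszul cohomology groups $K_{p,q}(X,\O_X(1))$ in the sense of Green, and then to apply the Koszul hyperplane-section theorem to reduce each such vanishing to the corresponding one on $Y$, and ultimately on $C$, where the classical results apply.
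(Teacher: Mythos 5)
The paper does not prove this statement at all: it is quoted as a known ``Fact'' with a pointer to \cite[Proposition~2]{alzati-russo-subhomaloidal}, so there is no in-paper argument to compare yours against. Your outline is, in substance, the standard proof lying behind that citation (reduction to a general curve section plus Green's $N_p$-theorem), and I see no step that would fail; but as written it is a strategy rather than a proof, and two points deserve to be made explicit. First, in the base case the hypotheses are stated in terms of degree versus codimension, while the classical curve results (Castelnuovo--Mumford, Saint-Donat/Fujita, Green) are stated in terms of degree versus genus; to pass from $\lambda\leq 2s+1-p$ to $\lambda\geq 2g+1+p$ you must first show that $\O_C(1)$ is nonspecial, which follows from Clifford's theorem (if $h^1(\O_C(1))>0$ then $s+2=h^0(\O_C(1))\leq \lambda/2+1$, contradicting $\lambda\leq 2s+1$), and then apply Riemann--Roch together with linear normality to get $g=\lambda-s-1$. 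Second, the transfer of quadratic generation and of linear syzygies from $Y$ back to $X$ does not follow from the ideal-sheaf restriction sequence alone, as your second paragraph might suggest; it is exactly Green's hyperplane-section (Lefschetz) theorem for Koszul cohomology, whose hypotheses --- essentially $H^1(X,\O_X(k))=0$ for the relevant $k$ --- must be verified, and here the natural route is to establish the first bullet (the arithmetically Cohen--Macaulay property, which the weakest degree bound already guarantees) before the other two, since $H^1(\I_X(k))=0$ for all $k$ together with $h^1(\O_X)=0$ yields the required vanishings. With these two points filled in, your induction on $\dim X$ closes correctly.
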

\begin{remark}
Let $\psi:\PP^n\dashrightarrow\mathbf{Z}:=\overline{\psi(\PP^n)}\subseteq\PP^{n+a}$ be
a birational transformation ($n\geq3$). 

We point out that, from Grothendieck's Theorem on parafactoriality (Samuel's Conjecture) 
\cite[\Rmnum{11} Corollaire~3.14]{sga2} it follows that $\mathbf{Z}$ is factorial 
whenever it is a local complete intersection 
 with $\dim(\sing(\mathbf{Z}))<\dim(\mathbf{Z})-3$.
Of course, every complete intersection in a smooth variety 
is a local complete intersections.

Moreover, $\psi^{-1}$ is liftable whenever
  $\Pic(\mathbf{Z})=\ZZ\langle H_{\mathbf{Z}}\rangle$ 
and $\mathbf{Z}$ is factorial and projectively normal. 
So, from \cite{larsen-coomology} and \cite[\Rmnum{4} Corollary~3.2]{hartshorne-ample}, 
$\psi^{-1}$ is liftable whenever
$\mathbf{Z}$ is either 
  smooth and projectively normal with $n\geq a+2$ 
or a factorial % of dimension $n\geq3$ 
complete intersection. 
\end{remark}
\section{Numerical restrictions}
Proposition \ref{prop: B is QEL} already provides 
a restriction on the invariants of the transformation $\varphi$;
 here we give further restrictions of this kind.
\begin{proposition}\label{prop: hilbert polynomial}
 Let $\epsilon=0$ if $\langle \B \rangle =\PP^n$ and let  $\epsilon=1$ otherwise.  
\begin{itemize} 
\item   If $r=1$ we have: 
  \begin{eqnarray*}
   \lambda &=&  (n^2-n+2\epsilon-2a-2)/2  ,  \\
         g &=&  (n^2-3n+4\epsilon-2a-2)/2  .
  \end{eqnarray*}
\item If $r=2$ we have: 
 \begin{eqnarray*}
   \chi(\O_{\B}) &=&  (2a-n^2+5n+2g-6\epsilon+4)/4 , \\
         \lambda &=&  (n^2-n+2g+2\epsilon-2a-4)/4 . \\
 \end{eqnarray*}
\item If $r=3$ we have: 
 \begin{eqnarray*}
   \chi(\O_{\B}) &=& (4\lambda-n^2+3n-2g-4\epsilon+2a+6)/2  .
 \end{eqnarray*}
\end{itemize}
\end{proposition}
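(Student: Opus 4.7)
The plan is to extract from the setup two cohomological equalities
\[
h^0(\O_\B(1))=n+1-\epsilon,\qquad h^0(\O_\B(2))=\tfrac{n^2+n-2a}{2},
\]
and to convert them into constraints on the Hilbert polynomial of $\B$ via Riemann-Roch. The first identity is linear normality of $\B$ in $\langle\B\rangle\simeq\PP^{n-\epsilon}$: since $\Sec(\B)$ is a proper subvariety of $\PP^n$ by Proposition~\ref{prop: B is QEL}, Zak's linear normality theorem applies and produces $h^1(\I_\B(1))=0$, and together with $h^0(\I_\B(1))=\epsilon$ the ideal sequence yields the claim. The second identity uses that $\varphi$ is defined by the complete linear system $|\I_\B(2)|$ and that $\sS\subseteq\PP^{n+a}$ is linearly normal, forcing $h^0(\I_\B(2))=n+a+1$; combined with quadratic normality $h^1(\I_\B(2))=0$ this gives the claim.

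Granting the standard vanishings $h^i(\O_\B(k))=0$ for $i\geq 1$ and $k\geq 1$, Riemann-Roch identifies $h^0(\O_\B(k))$ with $\chi(\O_\B(k))$. Unfolding the Hilbert polynomial through the cascade $\chi(\O_\B(k))-\chi(\O_\B(k-1))=\chi(\O_{\B\cap H}(k))$ down to $\chi(\O_C(k))=\lambda k+1-g$ for a general curve section $C$, one obtains
\[
\chi(\O_\B(k))=
\begin{cases}
\lambda k+(1-g), & r=1,\\
\chi(\O_\B)+\lambda\binom{k+1}{2}+(1-g)k, & r=2,\\
\chi(\O_\B)+k\,\chi(\O_S)+\lambda\binom{k+2}{3}+(1-g)\binom{k+1}{2}, & r=3,
\end{cases}
\]
where $S$ denotes a general surface section in the $r=3$ case. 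Substituting $k=1,2$ yields a small linear system: for $r=1$ it uniquely determines $(\lambda,g)$; for $r=2$ it gives $(\lambda,\chi(\O_\B))$ in terms of $g$; for $r=3$ it gives $\chi(\O_\B)$ in terms of $(\lambda,g)$ after eliminating the nuisance variable $\chi(\O_S)$.

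The main obstacle is securing the cohomological input, namely quadratic normality $h^1(\I_\B(2))=0$ together with the Kodaira-type vanishings $h^i(\O_\B(k))=0$ for $i\geq 1$ and $k=1,2$. I would extract these from the $K_2$-type criteria recalled in Facts~\ref{fact: K2 property}--\ref{fact: test K2} applied to the QEL structure of $\B$, and from the fact that the QEL manifolds in our regime have sufficiently positive anticanonical class for Kodaira vanishing to apply. Once these vanishings are in hand, the remainder is a mechanical Riemann-Roch bookkeeping that delivers the stated formulas.
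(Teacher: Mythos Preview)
Your overall strategy matches the paper's: compute $h^0(\I_\B(1))=\epsilon$ and $h^0(\I_\B(2))=n+1+a$ from the geometry of $\varphi$, upgrade $h^0(\O_\B(k))$ to $\chi(\O_\B(k))$ for $k=1,2$ via vanishing, and solve the resulting linear system in the coefficients of the Hilbert polynomial. The gap is in how you justify the vanishings, which you correctly flag as the main obstacle but do not actually secure.

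Neither source you propose works. Fact~\ref{fact: test K2} needs the degree bound $\lambda\leq 2\,\mathrm{codim}_{\PP^n}(\B)+1$, which is not known a priori and in fact fails in several cases the proposition must cover (e.g.\ $r=3$, $n=8$, $\lambda=13$). And $QEL$-manifolds of type $\delta=0$ need not have $H_\B-K_\B$ ample, so Kodaira vanishing on $\B$ itself is unavailable: already the septic elliptic scroll in $\PP^6$ (case $r=2$, $a=0$) is not Fano. Your appeal to Zak for $h^1(\I_\B(1))=0$ also breaks down when $\epsilon=1$, since then $\Sec(\B)=\langle\B\rangle$ and the hypothesis of Zak's theorem is not met inside the span.

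The paper obtains all the required vanishing in one stroke, proving $h^j(\PP^n,\I_\B(k))=0$ for every $j,k\geq1$ (this subsumes linear and quadratic normality and, via the ideal sequence, the vanishings $h^i(\O_\B(k))=0$ you need). The argument, contained in \cite[Lemma~3.3]{note} and \cite[Proposition~1.8]{mella-russo-baselocusleq3}, works on the blow-up $\widetilde{\PP^n}$ rather than on $\B$: one has $H^j(\PP^n,\I_\B(k))\simeq H^j(\widetilde{\PP^n},kH-E)$, and
\[
kH-E-K_{\widetilde{\PP^n}}=(k+n+1)H-(n-r)E=(k-n+2r+1)\,H+(n-r)\,H'.
\]
Both $H=\pi^{\ast}H_{\PP^n}$ and $H'=\pi'^{\ast}H_{\sS}$ are nef and big (pullbacks of ample classes under birational morphisms), and for $k\geq1$ the first coefficient is $k-n+2r+1\geq 2r+2-n=\delta\geq0$ precisely because $n\leq 2r+2$; Kawamata--Viehweg then yields the vanishing. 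This is where the numerical constraint $\delta\geq0$ from Proposition~\ref{prop: B is QEL} enters essentially, and it is the ingredient your sketch is missing.
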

\begin{proof}
By Proposition \ref{prop: B is QEL} we have
$h^0(\PP^n,\I_{\B}(1))=\epsilon$.
Since $\sS$ is normal and linearly normal, 
we have $h^0(\PP^n,\I_{\B}(2))=n+1+a$ (see \cite[Lemma~2.2]{note}).
Moreover,
 since $n\leq 2r+2$ (being $\delta\geq0$), 
proceeding as in  \cite[Lemma~3.3]{note}
 (or applying
 \cite[Proposition~1.8]{mella-russo-baselocusleq3}), 
 we obtain
$h^j(\PP^n,\I_{\B}(k))=0$ for every $j,k\geq1$.
So we obtain
$\chi(\O_{\B}(1))=n+1-\epsilon$ and 
$\chi(\O_{\B}(2))= (n+1)(n+2)/2 - (n+1+a)$.
\end{proof}
\begin{proposition}\label{prop: segre and chern classes}  \hspace{1pt}
\begin{itemize}
\item  If $r=1$ we have:
\begin{eqnarray*}
{c}_{1} &=& 2-2\,g, \\
{s}_{1} &=& \left( -n-1\right) \,\lambda-2\,g+2, \\
d &=& \left(2\,\lambda-{2}^{n}\right)/\left(\left( 2\,n-2\right) \,\lambda-{2}^{n+1}-4\,g+4\right), \\
\Delta &=& \left( 1-n\right) \,\lambda+{2}^{n}+2\,g-2.
\end{eqnarray*}
\item If $r=2$ we have:
\begin{eqnarray*}
{c}_{1} &=& \lambda-2\,g+2, \\
{c}_{2} &=& -\left(\left( {n}^{2}-3\,n\right) \,\lambda-{2}^{n+1}+\left( 4-4\,g\right) \,n+4\,g+2\,\Delta-4\right)/2, \\
{s}_{1} &=& -n\,\lambda-2\,g+2, \\
{s}_{2} &=& 2\,n\,\lambda+{2}^{n}+\left( 4\,g-4\right) \,n-\Delta, \\
d\,\Delta &=& \left( 2-n\right) \,\lambda+{2}^{n-1}+2\,g-2. \\
\end{eqnarray*}
\item If $r=3$ we have: 
\begin{eqnarray*}
{c}_{1} &=& 2\,\lambda-2\,g+2, \\
{c}_{2} &=& -\left(\left( {n}^{2}-5\,n+2\right) \,\lambda-{2}^{n}+\left( 4-4\,g\right) \,n+12\,g+2\,d\,\Delta-12\right)/2, \\
{c}_{3} &=& \left(\left( 2\,{n}^{3}-12\,{n}^{2}+22\,n-12\right) \,\lambda+9\,{2}^{n}+n\,\left( -3\,{2}^{n}+18\,g+6\,d\,\Delta-18\right) \right.\\
        &&  \left. +\left( 6-6\,g\right) \,{n}^{2}-24\,g+\left( -6\,d-6\right) \,\Delta+24\right)/6, \\
{s}_{1} &=& \left( 1-n\right) \,\lambda-2\,g+2, \\
{s}_{2} &=& \left(\left( 4\,n-4\right) \,\lambda+{2}^{n}+\left( 8\,g-8\right) \,n-8\,g-2\,d\,\Delta+8\right)/2, \\
{s}_{3} &=& \left(\left( 2\,{n}^{3}-12\,{n}^{2}+10\,n\right) \,\lambda+3\,{2}^{n}+n\,\left( -3\,{2}^{n}+12\,g+6\,d\,\Delta-12\right) \right. \\
        && \left. +\left( 12-12\,g\right) \,{n}^{2}-3\,\Delta\right)/3 .
\end{eqnarray*}
\end{itemize}
\end{proposition}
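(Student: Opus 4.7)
The plan is to reduce every quantity in the statement to intersection numbers on the blow-up $\widetilde{\PP^n}=\Bl_\B(\PP^n)$ and then translate back to $\B$ via the standard blow-up formula. With $c=n-r$ the codimension, the blow-up push-forward identity
\[
\pi_*(E^{c+k})=(-1)^{c+k-1}\,i_*\bigl(s_k(\N_{\B,\PP^n})\bigr),\qquad k\geq 0,
\]
together with $\pi_*(E^j)=0$ for $1\leq j<c$, specialises via the projection formula to
\[
H^{n-j}\cdot E^{j}=(-1)^{j-1}\,s_{j-c}\quad(j\geq c),\qquad H^{n-j}\cdot E^{j}=0\quad(1\leq j<c),
\]
where $s_{k}=s_k(\N_{\B,\PP^n})\cdot H_\B^{\,r-k}$ is the numerical Segre class appearing in the statement. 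Thus every monomial in $H$ and $E$ on $\widetilde{\PP^n}$ becomes an explicit polynomial in $\lambda, s_1,\dots,s_r$.

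The first key identity is $\Delta=(H')^n=(2H-E)^n$; expanding by the binomial theorem and substituting the displayed formula yields the formulas for $\Delta$ (and for $r=1$, also the direct expression stated). The second is $d\Delta=H\cdot(H')^{n-1}$: from the lift relation $E\sim(2d-1)H'-2E'$ in (\ref{eq: lift}) together with $\pi'_*(E')=0$ (which holds because $E'$ contracts to $\B'$, of codimension at least $2$ in $\sS$), one obtains $\pi'_*(E)=(2d-1)H_\sS$, and then $2H\sim H'+E$ gives $\pi'_*(H)=d\,H_\sS$; the projection formula yields $\int H\cdot(H')^{n-1}=d\,\Delta$. Expanding $H\cdot(2H-E)^{n-1}$ on $\widetilde{\PP^n}$ then produces the stated formula for $d\Delta$, and for $r=1$ the resulting value $d\Delta=2^{n-1}-\lambda$ combined with the formula for $\Delta$ determines $d$.

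The quantity $c_1=c_1(\T_\B)\cdot H_\B^{r-1}$ is obtained directly from the sectional genus via iterated adjunction, $2g-2=(K_\B+(r-1)H_\B)\cdot H_\B^{\,r-1}$, yielding $c_1=(r-1)\lambda+2-2g$ in each of the three cases. The remaining Chern and Segre quantities are bound together by the normal bundle sequence $c(\T_\B)\cdot c(\N_{\B,\PP^n})=(1+H_\B)^{n+1}$: extracting the $k$-th graded piece of $s(\N_{\B,\PP^n})=c(\T_\B)\cdot(1+H_\B)^{-(n+1)}$ and capping with $H_\B^{r-k}$ produces the triangular relation
\[
s_k=\sum_{i=0}^{k}(-1)^{k-i}\binom{n+k-i}{k-i}\,c_i,\qquad c_0:=\lambda,
\]
which inverts to express each $c_i$ as an integer linear combination of $\lambda,c_1,\dots,c_{i-1}$ and $s_1,\dots,s_i$. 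Together with $c_1$ from adjunction and $s_2,\dots,s_r$ from the $\Delta$ and $d\Delta$ identities above, this closes the system and produces every formula in the statement.

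The main technical obstacle is bookkeeping: tracking signs through the blow-up push-forward and Segre-inversion formulae, and executing the somewhat lengthy binomial expansions. Once each intersection number is written in the basis $(n,\lambda,g,\Delta)$ (respectively $(n,\lambda,g,\Delta,d\Delta)$ for $r=3$), the stated expressions follow as the unique solution of the resulting triangular linear system; the value of $\chi(\O_\B)$ supplied by Proposition \ref{prop: hilbert polynomial} together with the Hirzebruch--Riemann--Roch identity $\chi(\O_\B)=c_1(\T_\B)\cdot c_2(\T_\B)/24$ on a threefold provides a useful independent consistency check.
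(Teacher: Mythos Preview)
Your proposal is correct and follows essentially the same approach as the paper: express $\Delta=(2H-E)^n$ and $d\Delta=H\cdot(2H-E)^{n-1}$ via the blow-up intersection formula $H^{n-j}\cdot E^{j}=(-1)^{j-1}s_{j-c}$, obtain $c_1$ from adjunction, and relate the $s_k$ to the $c_k$ through the normal-bundle sequence, yielding a triangular system. Your justification of $H'^{\,n-1}\cdot E'=0$ via $\pi'_*(E')=0$ is in fact slightly more explicit than the paper's, which simply writes $d\,H'^{\,n}=H'^{\,n-1}\cdot(dH'-E')$ without comment; the final Riemann--Roch remark is extraneous but harmless.
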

\begin{proof}
See also \cite{crauder-katz-1989} and \cite{crauder-katz-1991}.
By \cite[page~291]{crauder-katz-1989} we see that 
\begin{displaymath}
 H^j\cdot E^{n-j}=
\left\{ 
\begin{array}{ll} 
1, & \mbox{if } j= n ; \\
0, & \mbox{if } r+1\leq j \leq n-1 ; \\
(-1)^{n-j-1} s_{r-j}, & \mbox{if } j \leq r .
\end{array} 
\right.
\end{displaymath}
Since $H'=2H-E$ and $H=dH'-E'$ we have
\begin{eqnarray}
\Delta &=& {H'}^n=(2H-E)^n,  \\
d \Delta &=& d{H'}^{n}={H'}^{n-1}\cdot(dH'-E')=(2H-E)^{n-1}\cdot H.  
\end{eqnarray}
From the exact sequence
$0\rightarrow\mathcal{T}_{\B}\rightarrow \mathcal{T}_{\PP^n}|_{\B}\rightarrow\mathcal{N}_{\B,\PP^n}\rightarrow0$
we get: 
\begin{eqnarray}
s_1 &=& - \lambda \left( n+1\right) + {c}_{1} ,\\
s_2 &=&  \lambda \begin{pmatrix}n+2\cr 2\end{pmatrix}-{c}_{1} \left( n+1\right) +{c}_{2} ,\\ 
s_3 &=& -\lambda \begin{pmatrix}n+3\cr 3\end{pmatrix}+{c}_{1} \begin{pmatrix}n+2\cr 2\end{pmatrix}-{c}_{2} \left( n+1\right) +{c}_{3} ,\\ 
  & \vdots & \nonumber
\end{eqnarray}
Moreover $c_1=-K_{\B}\cdot H_{\B}^{r-1}$ 
and it can be expressed as a function of $\lambda$ and $g$.
Thus we found $r+3$ 
 independent equations on the $2r+5$ variables: 
$c_1,\ldots,c_r,s_1,\ldots,s_r,d,\Delta,\lambda,g,n$.
\end{proof}
\begin{remark}
Proposition \ref{prop: segre and chern classes} 
holds under less restrictive assumptions, as shown in the above proof.
Here we treat the special case:
let $\psi:\PP^8\dashrightarrow\mathbf{Z}:=\overline{\psi(\PP^8)}\subseteq\PP^{8+a}$ be a quadratic 
rational map whose base locus 
is a smooth irreducible $3$-dimensional variety $X$.
Without any other restriction on $\psi$, denoting
with $\pi:\Bl_X(\PP^8)\rightarrow \PP^8$ 
the blow-up of $\PP^8$ along $X$ and with $s_i(X)=s_i(\N_{X,\PP^8})$, we have
\begin{equation}\label{eq: grado mappa razionale}
\deg(\psi)\deg(\mathbf{Z}) = (2\pi^{\ast}(H_{\PP^8})-E_X)^8 =
-s_3(X)-16s_2(X)-112s_1(X)-448\deg(X)+256.
\end{equation}
 Moreover, if $\psi$ is birational with liftable inverse and 
$\dim(\sing(\mathbf{Z}))\leq 6$,
 we also have
\begin{equation}\label{eq: sollevabile}
d \deg(\mathbf{Z}) =(2\pi^{\ast}(H_{\PP^8})-E_X)^7\cdot \pi^{\ast}(H_{\PP^8}) = -s_2(X) -14 s_1(X) -84 \deg(X) +128,
\end{equation}
where $d$ denotes the degree of the linear system defining $\psi^{-1}$.
%
%   Let $f:\widetilde{\PP^n}\stackrel{\pi'}{\rightarrow}\sS\hookrightarrow\PP^{N}$. If $\dim(\B')\leq n-2$, then $f_{\ast}(E')=0$ and hence $$\deg(H'^{n-1}\cdot E')=\deg(f^{\ast}(H_{\PP^N}^{n-1})\cdot E')=\deg(H_{\PP^N}^{n-1}\cdot f_{\ast}(E'))=0.$$ Now, let $(\B')_{\mathrm{red}}=B_1\cup\cdots\cup B_l\cup B_{l+1}\cup\cdots\cup B_t$,  be the decomposition into irreducible components, with $B_1,\ldots,B_l\nsubseteq \sing(\sS)=\sS\setminus U$ and $B_{l+1},\ldots,B_t\subseteq \sing(\sS)=\sS\setminus U$;  $U:=\reg(\sS)$, $g:=\psi^{-1}:\sS\dashrightarrow\PP^n$. We have $\mathrm{Bs}(g|_U)=\mathrm{Bs}(g)\cap U=\B'\cap U =(B_1\cup\cdots\cup B_l)\cap U$ and $\mathrm{codim}_{\sS}(B_i)=\mathrm{codim}_{U}(B_i\cap U)\geq 2$ for $i=1,\ldots,l$.  Thus we have $\mathrm{codim}_{\sS}(\B')\geq2$ if, for example, $\mathrm{codim}_{\sS}(\sing(\sS))\geq 2$. 
%
\end{remark}
Proposition \ref{prop: double point formula}
is a translation of the well-known 
\emph{double point formula} (see for example \cite{peters-simonis} and \cite{laksov}),
taking into account Proposition \ref{prop: B is QEL}.
\begin{proposition}\label{prop: double point formula}
If $\delta=0$ 
then 
$$
2(2d-1)=
\lambda^2 - 
\sum_{j=0}^{r}\begin{pmatrix} 2r+1 \cr j \end{pmatrix} s_{r-j}(\T_{\B})\cdot H_{\B}^{j}.
$$
\end{proposition}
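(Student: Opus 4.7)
The plan is to apply the classical double point formula to a general linear projection of $\B$. By Proposition~\ref{prop: B is QEL}, the hypothesis $\delta=0$ forces $n=2r+2$, and $\Sec(\B)\subset\PP^{n}$ is a hypersurface of degree $2d-1$ through a general point of which passes a unique secant of $\B$, meeting $\B$ in the entry locus, a smooth $0$-dimensional quadric (two distinct reduced points).

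First I would choose a general line $L\subset\PP^{n}$, disjoint from $\B$ by a dimension count and meeting $\Sec(\B)$ transversely in $2d-1$ reduced points. Let $f\colon\B\to\PP^{2r}$ be the projection from $L$ restricted to $\B$. For each $x\in L\cap\Sec(\B)$ the unique secant through $x$ has two distinct endpoints in $\B$, so $f$ is birational onto its image, and $f^{-1}(f(p))$ consists of two reduced points $\{p,q\}$ precisely when the chord $\overline{pq}$ meets $L$, i.e.\ for exactly $2d-1$ unordered pairs. Hence the double-point $0$-cycle $\mathbb{D}(f)\in A_0(\B)$ satisfies $\deg\mathbb{D}(f)=2(2d-1)$.

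Then I would invoke the double point formula of \cite{peters-simonis} and \cite{laksov}
$$\mathbb{D}(f)=f^{\ast}f_{\ast}[\B]-c_r\bigl(f^{\ast}\T_{\PP^{2r}}-\T_{\B}\bigr)\cap[\B].$$
Since $f$ is birational onto a degree-$\lambda$ image, $f_{\ast}[\B]=\lambda H^{r}\in A^{r}(\PP^{2r})$, giving $\deg f^{\ast}f_{\ast}[\B]=\lambda\cdot H_{\B}^{r}=\lambda^{2}$. The Euler sequence on $\PP^{2r}$ pulled back via $f$ yields $c(f^{\ast}\T_{\PP^{2r}})=(1+H_{\B})^{2r+1}$, hence
$$c_r\bigl(f^{\ast}\T_{\PP^{2r}}-\T_{\B}\bigr)=\bigl[(1+H_{\B})^{2r+1}\,s(\T_{\B})\bigr]_{r}=\sum_{j=0}^{r}\binom{2r+1}{j}\,s_{r-j}(\T_{\B})\cdot H_{\B}^{j}.$$
Equating the two expressions for $\deg\mathbb{D}(f)$ yields the stated identity. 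The delicate step will be the multiplicity count: for $L$ general one must rule out pinch points of $f$, tangential chords through $L$, and coalescing preimages, so that each of the $2d-1$ double-image points contributes exactly $2$ to $\deg\mathbb{D}(f)$. Genericity of $L$ (the tangent variety of $\B$ has dimension $2r$, so a general line in $\PP^{2r+2}$ misses it) handles the first two issues in the standard way, while the $\delta=0$ QEL-structure of Proposition~\ref{prop: B is QEL} is exactly what guarantees the third.
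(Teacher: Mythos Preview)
Your argument is correct and is precisely the ``translation'' the paper alludes to: the paper gives no detailed proof, merely citing \cite{peters-simonis} and \cite{laksov} together with Proposition~\ref{prop: B is QEL}, and your proposal spells out exactly this---projection from a general line to $\PP^{2r}$, identification of the double-point scheme with the $2d-1$ secants meeting the line (each contributing an ordered pair), and the standard expansion of $c_r(f^{\ast}\T_{\PP^{2r}}-\T_{\B})$ via the Euler sequence.
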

\section{Case of dimension 1}\label{sec: dim 1}
Lemma \ref{lemma: numerical 1-fold}  
directly follows from  
Propositions \ref{prop: hilbert polynomial} 
and \ref{prop: segre and chern classes}.
\begin{lemma}\label{lemma: numerical 1-fold}
If $r=1$, then 
one of the following cases holds:
\begin{enumerate}[(A)]
 \item $n=3$, $a=1$, $\lambda=2$, $g=0$, $d=1$, $\Delta=2$; 
 \item $n=4$, $a=0$, $\lambda=5$, $g=1$, $d=3$, $\Delta=1$; 
 \item $n=4$, $a=1$, $\lambda=4$, $g=0$, $d=2$, $\Delta=2$; 
 \item\label{case: escluso 1-fold} $n=4$, $a=2$, $\lambda=4$, $g=1$, $d=1$, $\Delta=4$; 
 \item $n=4$, $a=3$, $\lambda=3$, $g=0$, $d=1$, $\Delta=5$.
\end{enumerate}
\end{lemma}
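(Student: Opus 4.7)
The plan is to enumerate all numerical configurations compatible with Propositions \ref{prop: hilbert polynomial} and \ref{prop: segre and chern classes}, together with the geometric constraints implicit in the setup.

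First, Proposition \ref{prop: B is QEL} gives $\delta=2r+2-n\geq 0$, so $n\leq 4$; and $n\geq 3$ is forced since a smooth connected curve cannot be the base locus of a quadratic birational transformation of $\PP^n$ for smaller $n$. Hence it suffices to go through the four pairs $(n,\epsilon)\in\{3,4\}\times\{0,1\}$. For each such pair the $r=1$ formulas of Proposition \ref{prop: hilbert polynomial} give $\lambda$ and $g$ as affine-linear functions of $a$, and the inequalities $\lambda\geq 1$, $g\geq 0$, $a\geq 0$ leave only a short list of non-negative integer candidates for $a$.

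Second, for each surviving $(n,\epsilon,a)$ I would substitute the corresponding $(\lambda,g)$ into the $r=1$ identities of Proposition \ref{prop: segre and chern classes} and compute $d$ and $\Delta$. I then retain only those triples for which both $d$ and $\Delta$ are positive integers, and for which $\Delta=1$ exactly when $a=0$ (since $a=0$ forces $\sS=\PP^n$). Spurious candidates such as $(n,\epsilon,a)=(3,1,0)$, where $d=1/2$, and $(4,1,0)$, where $\Delta=2$ although $a=0$, are eliminated at this step.

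The point that requires a little care is the degenerate branch $\epsilon=1$: here the hyperplane $H\supset\B$ contributes $n+1$ of the quadrics defining $\varphi$, so only the $h^0(\I_{\B,H}(2))=a$ quadrics of $H\cong\PP^{n-1}$ containing $\B$ are available to cut $\B$ out of $H$. Counting codimensions forces $a\geq(n-1)-r=n-2$, which excludes the otherwise admissible triple $(n,\epsilon,a,\lambda,g,d,\Delta)=(4,1,1,5,2,1,3)$. After this last check only the five configurations (A)--(E) survive, as claimed.
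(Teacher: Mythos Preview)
Your proof is correct and follows the same strategy the paper intends by its one-line remark that the lemma ``directly follows from Propositions~\ref{prop: hilbert polynomial} and~\ref{prop: segre and chern classes}'': enumerate the finitely many $(n,\epsilon,a)$ compatible with $\delta\ge 0$, $g\ge 0$, $\lambda\ge 1$, compute $(\lambda,g,d,\Delta)$, and discard the inconsistent ones. Your write-up makes explicit one point the paper's terse proof glosses over, namely the exclusion of the numerically admissible tuple $(n,\epsilon,a,\lambda,g,d,\Delta)=(4,1,1,5,2,1,3)$; your codimension argument (that when $\epsilon=1$ the $a=h^0(\I_{\B,H}(2))$ quadrics in $H\cong\PP^{n-1}$ must already cut out $\B$, forcing $a\ge(n-1)-r$) is exactly what is needed here and is implicit in the paper's description of type $(2,1)$ maps in the introduction.
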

\begin{proposition}\label{prop: possibili casi 1-fold}
If $r=1$, then 
one of the following cases holds:
\begin{enumerate}[(I)]
 \item $n=3$, $a=1$,  $\B$ is a conic;
 \item $n=4$, $a=0$,  $\B$ is an elliptic curve of degree $5$;
 \item $n=4$, $a=1$,  $\B$ is the rational normal quartic curve;
 \item $n=4$, $a=3$,  $\B$ is the twisted cubic curve.
\end{enumerate}
\end{proposition}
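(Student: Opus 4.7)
The plan is to walk through the five numerical cases (A)--(E) of Lemma \ref{lemma: numerical 1-fold}, identifying $\B$ in each case from Proposition \ref{prop: B is QEL} and the standard classification of smooth irreducible curves of low degree; the delicate point is to rule out case (D), which I would do by exhibiting a violation of Assumption \ref{assumption: ipotesi}.

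\medskip

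For case (A), Proposition \ref{prop: B is QEL} gives $\delta=2r+2-n=1$, so through a general point of $\Sec(\B)\setminus\B$ the entry locus $\Sigma_x(\B)$ is a $1$-dimensional smooth quadric contained in $\B$. Since $\B$ is irreducible of dimension $1$, this forces $\B=\Sigma_x(\B)$ to be a smooth conic, consistent with $\lambda=2$, $g=0$.

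\medskip

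For cases (B), (C), (D), (E), we have $\delta=0$ and so $\Sec(\B)\subset\PP^4$ is a hypersurface of degree $2d-1$. Reading the value of $\epsilon$ off the first formula in Proposition \ref{prop: hilbert polynomial} separates the cases where $\langle\B\rangle=\PP^4$ (namely (B) and (C), where $\epsilon=0$) from those where $\B$ lies in a hyperplane $\PP^3\subset\PP^4$ (namely (D) and (E), where $\epsilon=1$). In case (B), $\B$ is then a smooth linearly normal elliptic curve of degree $5$ in $\PP^4$; in case (C), $\B$ is a smooth nondegenerate rational curve of degree $4$ in $\PP^4$, i.e.\ the rational normal quartic; in case (E), $\B$ is a smooth rational cubic in a hyperplane of $\PP^4$, i.e.\ a twisted cubic.

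\medskip

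The main obstacle is excluding case (D). There the analysis above produces a smooth elliptic curve of degree $4$ in a hyperplane $\PP^3\subset\PP^4$, hence an elliptic quartic $\B$ cut out in that $\PP^3$ by two independent quadrics $Q_1,Q_2$. Up to projective equivalence $\varphi$ is then necessarily the map
$$ [x_0:x_1:x_2:x_3:x_4]\longmapsto [x_0^2:x_0x_1:x_0x_2:x_0x_3:x_0x_4:Q_1(x_1,\ldots,x_4):Q_2(x_1,\ldots,x_4)], $$
whose image $\sS\subset\PP^6$ is the complete intersection of the two quadrics $y_0y_5-Q_1(y_1,\ldots,y_4)$ and $y_0y_6-Q_2(y_1,\ldots,y_4)$; its inverse is the linear projection $\widehat{\varphi}=[y_0:y_1:y_2:y_3:y_4]$. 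A short Jacobian computation shows that the two defining quadrics drop rank exactly along the line $\ell=\{y_0=y_1=\cdots=y_4=0\}\subset\PP^6$, so $\sing(\sS)=\ell$; on the other hand the base locus of $\widehat{\varphi}$ is the same linear subspace $\ell$, and $\ell\subset\sS$, so $\B'=\widehat{\B}\cap\sS=\ell$. Hence $(\sing(\sS))_{\mathrm{red}}=(\B')_{\mathrm{red}}$, violating Assumption \ref{assumption: ipotesi}, which rules out case (D) and completes the classification.
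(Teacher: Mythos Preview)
Your argument is correct, and the handling of cases (A), (B), (C), (E) matches the paper's (which simply defers to Lemma~\ref{lemma: numerical 1-fold}). The interesting divergence is in how you exclude case~(D).

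The paper's argument is purely structural: under the standing assumptions, Proposition~\ref{prop: B is QEL} applies, so with $d=1$ the secant variety $\Sec(\B)$ is the hyperplane $\PP^3=\langle\B\rangle$ and the general entry locus is a $0$-dimensional quadric; hence $\B\subset\PP^3$ is an $OADP$-curve. Since the only $OADP$-curve in $\PP^3$ is the twisted cubic, an elliptic quartic is impossible. Your approach instead writes down $\varphi$ explicitly, identifies $\sS\subset\PP^6$ as the complete intersection $y_0y_5-Q_1=y_0y_6-Q_2=0$, and checks by a Jacobian computation that $\sing(\sS)$ coincides with the line $\ell=\{y_0=\cdots=y_4=0\}=\B'$, so Assumption~\ref{assumption: ipotesi} fails directly. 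Both routes ultimately hinge on the same assumption (Proposition~\ref{prop: B is QEL} itself needs Assumption~\ref{assumption: ipotesi}), but the paper's is a two-line appeal to a known classification, while yours is a self-contained computation that makes the failure of Assumption~\ref{assumption: ipotesi} completely explicit---and incidentally produces a concrete example in the spirit of Example~\ref{example: B2=singSred}. Your computation of $\sing(\sS)$ is correct because the smoothness of the elliptic quartic $\B=V(Q_1,Q_2)$ guarantees $\nabla Q_1$ and $\nabla Q_2$ are independent at every nonzero $(y_1,\ldots,y_4)$ with $Q_1=Q_2=0$.
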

\begin{proof}
 From Lemma \ref{lemma: numerical 1-fold}
 it remains only to exclude case (\ref{case: escluso 1-fold}).
In this case $\B$ is a complete intersection of two quadrics in $\PP^3$ 
% (see \cite[\Rmnum{4} Exercise~3.6]{hartshorne-ag}) 
and also 
it is an $OADP$-curve. 
This is absurd because the only $OADP$-curve is the twisted cubic curve.
\end{proof}
\section{Case of dimension 2}\label{sec: dim 2}
Proposition \ref{prop: possibili casi 2-fold} follows from 
            \cite[Propositions~1.3 and 3.4]{russo-qel1} and
            \cite[Theorem~4.10]{ciliberto-mella-russo}.
\begin{proposition}\label{prop: possibili casi 2-fold} 
If $r=2$, then either $n=6$, $d\geq2$, $\langle \B \rangle = \PP^6$, or 
one of the following cases holds:
\begin{enumerate}[(I)] 
  \setcounter{enumi}{4} 
 \item\label{case 2-fold a}  $n=4$,  $d=1$, $\delta=2$, 
              $\B=\PP^1\times\PP^1\subset\PP^3\subset\PP^4$;
 \item\label{case 2-fold b}  $n=5$,  $d=1$, $\delta=1$, 
              $\B$ is a hyperplane section of $\PP^1\times\PP^2\subset\PP^5$; 
 \item\label{case 2-fold c}  $n=5$,  $d=2$, $\delta=1$, 
              $\B=\nu_2(\PP^2)\subset\PP^5$ is the Veronese surface;
 \item\label{case 2-fold d}  $n=6$,  $d=1$, $\delta=0$, $\B\subset\PP^5$ is an $OADP$-surface, i.e. $\B$ is as in one of the following cases: 
         \begin{enumerate}[($\ref{case 2-fold d}_1$)]
         \item\label{case 2-fold d1} $\PP_{\PP^1}(\O(1)\oplus\O(3))$ or
                                     $\PP_{\PP^1}(\O(2)\oplus\O(2))$;        
         \item\label{case 2-fold d2} del Pezzo surface of degree  $5$ 
                              (hence the blow-up of  
                 $\PP^2$ at $4$ points 
                 $p_1,\ldots,p_4$ and $|H_{\B}|=|3H_{\PP^2}-p_1-\cdots-p_4|$).
         \end{enumerate}
\end{enumerate}
\end{proposition}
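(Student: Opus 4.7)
The plan is to combine Proposition \ref{prop: B is QEL} with the cited classifications of low-$\delta$ QEL-surfaces and of OADP-surfaces, then split on the value of $n$. By Proposition \ref{prop: B is QEL}, $\B$ is a smooth QEL-surface of type $\delta=2r+2-n=6-n\geq 0$, so $n\in\{4,5,6\}$, and $\Sec(\B)\subset\PP^n$ is a hypersurface of degree $2d-1$.

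First I would handle the two easy dimensions. For $n=4$ we have $\delta=r=2$; a QEL-surface of maximal type $\delta=r$ has the property that a general secant line lies in $\B$ itself, and by \cite[Prop.~1.3]{russo-qel1} this forces $\B$ to be a smooth quadric surface $\PP^1\times\PP^1$ sitting in a $\PP^3\subset\PP^4$, giving case (\ref{case 2-fold a}) (and $d=1$ since $\Sec(\B)=\PP^3$ is a hyperplane). For $n=5$ we have $\delta=1$ and \cite[Prop.~3.4]{russo-qel1} lists precisely the smooth QEL-surfaces of this type as the hyperplane sections of $\PP^1\times\PP^2\subset\PP^5$ and as the Veronese surface $\nu_2(\PP^2)\subset\PP^5$, yielding cases (\ref{case 2-fold b}) and (\ref{case 2-fold c}); the value $d=1$ in the first arises because $\B$ spans a hyperplane, while $d=2$ in the second comes from the classical fact that $\Sec(\nu_2(\PP^2))$ is a cubic hypersurface of $\PP^5$.

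For $n=6$ we have $\delta=0$. If $d=1$, then $\deg\Sec(\B)=2d-1=1$ forces $\Sec(\B)$ to be a hyperplane $\PP^5\subset\PP^6$ in which $\B$ becomes a genuine OADP-surface, and applying \cite[Thm.~4.10]{ciliberto-mella-russo} produces exactly the list in case (\ref{case 2-fold d}): the two rational normal scrolls $\PP_{\PP^1}(\O(1)\oplus\O(3))$ and $\PP_{\PP^1}(\O(2)\oplus\O(2))$ (subcase (\ref{case 2-fold d1})) together with the del Pezzo surface of degree $5$ (subcase (\ref{case 2-fold d2})). If instead $d\geq 2$, the secant hypersurface is nonlinear, and since $\B\subset\Sec(\B)$ cannot be contained in any hyperplane in this situation, we obtain $\langle\B\rangle=\PP^6$, which is precisely the remaining ``open'' case left unresolved by the proposition.

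The main obstacle is not geometric but bookkeeping: all the classification heavy-lifting is imported wholesale from \cite{russo-qel1} and \cite{ciliberto-mella-russo}, so the only delicate point is to verify that the numerical value of $d$ recorded in each listed case is consistent with $\deg\Sec(\B)=2d-1$, and in particular that at $n=6$ the degenerate/nondegenerate dichotomy corresponds exactly to $d=1$ versus $d\geq 2$.
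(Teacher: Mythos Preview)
Your argument is correct and follows exactly the route indicated by the paper, which simply records that the proposition ``follows from \cite[Propositions~1.3 and 3.4]{russo-qel1} and \cite[Theorem~4.10]{ciliberto-mella-russo}''; you have merely unpacked how each citation applies to the cases $n=4,5,6$ and checked the value of $d$ via $\deg\Sec(\B)=2d-1$. The only cosmetic point is the phrasing in the $n=6$, $d\geq 2$ step: the clean implication is $\langle\B\rangle\subset H\Rightarrow\Sec(\B)\subset H\Rightarrow\Sec(\B)=H$ (degree $1$), contradicting $d\geq 2$, which is what your sentence intends.
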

\begin{lemma}\label{lemma: r=2 B nondegenerate}
If $r=2$, $n=6$ and $\langle \B \rangle = \PP^6$, 
then  one of the following cases holds:
\begin{enumerate}[(A)]
\item \label{case 2-fold a=0 lambda=7}
 $a=0$,  $\lambda=7$, $g=1$, $\chi(\O_{\B})=0$;
\item \label{case 2-fold a leq 3}
 $0\leq a \leq 3$, $\lambda=8-a$, $g=3-a$, $\chi(\O_{\B})=1$.
\end{enumerate}
\end{lemma}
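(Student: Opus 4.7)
The plan is to combine the Hilbert-polynomial formulas of Proposition \ref{prop: hilbert polynomial} with classical and refined Castelnuovo-type genus bounds on a general hyperplane section of $\B$.

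Applying Proposition \ref{prop: hilbert polynomial} with $r=2$, $n=6$, $\epsilon=0$ yields
\[
\lambda=\tfrac{13+g-a}{2},\qquad \chi(\O_{\B})=\tfrac{a+g-1}{2},
\]
equivalently $g=2\lambda+a-13$ and $\chi(\O_{\B})=\lambda+a-7$. Integrality forces $a+g$ odd, and non-degeneracy of the codimension-$4$ surface $\B\subset\PP^6$ forces $\lambda\geq 5$; together with $a,g\geq 0$ this produces a discrete set of candidates parameterised by $(a,\lambda)$.

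I next pass to the general hyperplane section $C=\B\cap L\subset\PP^5$, a smooth irreducible non-degenerate curve of degree $\lambda$ and genus $g=2\lambda+a-13$, and apply Castelnuovo's classical bound $g\leq \pi(\lambda,5)=m(\lambda-2m-3)$ with $m=\lfloor(\lambda-1)/4\rfloor$. For $\lambda\in\{9,10,11,12,13\}$ even the minimal choice $a=0$ gives $g=2\lambda-13>\pi(\lambda,5)$, so these are immediately ruled out; direct enumeration for $\lambda\in\{5,6,7,8\}$ produces exactly (A) and (B), together with a single stray quadruple $(\lambda,a,g,\chi)=(6,1,0,0)$. To exclude it, observe that $g=0$ makes $\B$ covered by rational hyperplane sections, so $\kappa(\B)=-\infty$ and the minimal model of $\B$ is either $\PP^2$ or a $\PP^1$-bundle $\PP(E)\to C_0$ over a smooth curve. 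The relation $\chi(\O_{\B})=1-g(C_0)=0$ rules out $\PP^2$ and forces $g(C_0)=1$; but a general hyperplane section cannot be contained in a fiber $F$ of the map $\B\to C_0$ (ampleness gives $H\cdot F>0$), hence the composition $C\hookrightarrow\B\to C_0$ is a non-constant morphism from a rational curve to an elliptic one, contradiction.

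For the remaining range $\lambda\geq 14$, Castelnuovo's plain bound is no longer sharp enough: at these values it is either attained or only slightly fails. Here I would invoke the refined Castelnuovo-type bounds from \cite{ciliberto-hilbertfunctions} and \cite{mella-russo-baselocusleq3}, which exploit the projective normality of $\B$ (and hence of $C$) together with the precise quadric count $h^0(\I_{C,\PP^5}(2))=7+a$ to strengthen Castelnuovo's inequality and eliminate these extremal configurations. The main obstacle is exactly this last step: extremal curves $C\subset\PP^5$ with $g=\pi(\lambda,5)$ lie on a surface of minimal degree, so one must verify that such a $C$ cannot be the hyperplane section of a $QEL$-surface of type $\delta=0$ in $\PP^6$, and the cleanest route is the refinement of the Hilbert-function side of Castelnuovo's argument explicitly cited in the paper.
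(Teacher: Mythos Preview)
Your derivation of $g=2\lambda+a-13$ and $\chi(\O_{\B})=\lambda+a-7$ matches the paper, and your explicit exclusion of the stray quadruple $(\lambda,a,g,\chi)=(6,1,0,0)$ via the minimal model of $\B$ is a valid argument; the paper handles that case by reference to \cite[Proposition~6.2]{note}.

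The gap is your treatment of $\lambda\geq 14$: you correctly observe that the plain Castelnuovo genus bound on the curve section $C\subset\PP^5$ no longer bites (for instance at $\lambda=14$, $a=0$ it gives $g\leq 15$, which is exactly the value forced), and you only gesture toward refined bounds without carrying anything through. The paper sidesteps this entirely by cutting one dimension further, to the general zero-dimensional section $\Lambda\subset\PP^4$. Castelnuovo's argument on the Hilbert function (uniform position, cited as \cite[Lemma~6.1]{note}) gives
\[
\min\{\lambda,9\}\ \leq\ h_{\Lambda}(2)\ \leq\ 15-h^0(\PP^6,\I_{\B}(2))\ =\ 8-a,
\]
which for $a\geq 0$ forces $\lambda\leq 8-a$ in a single line and makes the whole range $\lambda\geq 9$ disappear without any genus estimate. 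Combined with $g\geq 0$, i.e.\ $\lambda\geq(13-a)/2$, one is left with precisely the six pairs $(a,\lambda)$ you found by enumeration. You already had the quadric count $h^0(\I_{C,\PP^5}(2))=7+a$ in hand; the point you missed is that passing once more to $\Lambda$ and reading it as a Hilbert-function bound, rather than as input to a refined genus inequality, closes the argument immediately.
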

\begin{proof}
By Proposition \ref{prop: hilbert polynomial}
 it follows that
$g=2\lambda+a-13$ and $\chi(\O_{\B})=\lambda+a-7$.
By \cite[Lemma~6.1]{note} and using that $g\geq0$
(proceeding as in \cite[Proposition~6.2]{note}),
we obtain $(13-a)/2 \leq \lambda \leq 8-a$.
\end{proof}
\begin{lemma}\label{lemma: double point formula r=2}
 If $r=2$, $n=6$ and $\langle \B \rangle = \PP^6$, then
one of the following cases holds:
\begin{itemize}
 \item $a=0$, $d=4$, $\Delta=1$;
 \item $a=1$, $d=3$, $\Delta=2$;
 \item $a=2$, $d=2$, $\Delta=4$;
 \item $a=3$, $d=2$, $\Delta=5$.
\end{itemize}
\end{lemma}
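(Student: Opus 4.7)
The plan is to combine the double point formula of Proposition \ref{prop: double point formula} with the identities in Proposition \ref{prop: segre and chern classes} and the Hilbert data of Lemma \ref{lemma: r=2 B nondegenerate}. Specializing Proposition \ref{prop: segre and chern classes} (case $r=2$) to $n=6$ yields the relation
\begin{equation*}
d\Delta = -4\lambda + 2g + 30,
\end{equation*}
together with an explicit expression for $c_2 = c_2(\T_{\B})$ in terms of $\lambda$, $g$, $\Delta$. In parallel, since $\delta=0$, Proposition \ref{prop: double point formula} reads
\begin{equation*}
2(2d-1) = \lambda^2 - s_2(\T_{\B}) - 5\,s_1(\T_{\B})\cdot H_{\B} - 10\lambda,
\end{equation*}
where now the Segre classes involved refer to the \emph{tangent} bundle of $\B$ and must first be converted into Chern-class data.

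The key bridge is the identity $s(\T_{\B}) = 1/c(\T_{\B})$, which gives $s_1(\T_{\B}) = -c_1(\T_{\B})$ and $s_2(\T_{\B}) = c_1(\T_{\B})^2 - c_2(\T_{\B})$. Adjunction yields $c_1(\T_{\B})\cdot H_{\B} = \lambda - 2g + 2$, while Noether's formula gives $c_1(\T_{\B})^2 = K_{\B}^2 = 12\chi(\O_{\B}) - c_2$. Inserting these into the double point formula together with the expression for $c_2$ obtained from Proposition \ref{prop: segre and chern classes}, a direct substitution reduces everything to the single linear relation
\begin{equation*}
4d + 2\Delta = \lambda^2 - 23\lambda + 10g - 12\chi(\O_{\B}) + 120.
\end{equation*}

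The final step is to impose the two relations above in each of the numerical types provided by Lemma \ref{lemma: r=2 B nondegenerate}. In case (A) they immediately force $(d,\Delta) = (4,1)$. In case (B) one obtains, for each $a \in \{0,1,2,3\}$, a quadratic equation in $d$ whose integer solutions give exactly the pairs listed in the statement. The spurious root $d=1$ that appears for $a=1$ must be excluded: by Proposition \ref{prop: possibili casi 2-fold}, a transformation of type $(2,1)$ with $r=2$ and $n=6$ necessarily has $\B$ degenerate, contradicting our hypothesis $\langle\B\rangle = \PP^6$.

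The main delicate point in this plan is not any single computation but the bookkeeping between the two conventions for Segre classes appearing in the paper (those of the \emph{normal} bundle in Proposition \ref{prop: segre and chern classes}, but of the \emph{tangent} bundle in Proposition \ref{prop: double point formula}); once the conversion via $s(\T_{\B})\,c(\T_{\B}) = 1$ and Noether's formula is made, the rest reduces to solving a handful of quadratics and discarding non-integer or excluded values of $d$.
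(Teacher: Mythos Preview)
Your proposal is correct and follows essentially the same route as the paper: convert the tangent-bundle Segre classes in Proposition~\ref{prop: double point formula} to Chern data via $s_1(\T_{\B})=-c_1$, $s_2(\T_{\B})=c_1^2-c_2=12\chi(\O_{\B})-2c_2$, substitute the expression for $c_2$ from Proposition~\ref{prop: segre and chern classes}, and combine with $d\Delta=-4\lambda+2g+30$ and the cases of Lemma~\ref{lemma: r=2 B nondegenerate}. The paper merely eliminates $\lambda$ and $\chi(\O_{\B})$ earlier via Proposition~\ref{prop: hilbert polynomial} to write everything in terms of $a$ and $g$, and your explicit exclusion of the spurious root $d=1$ for $a=1$ (using Proposition~\ref{prop: possibili casi 2-fold}) is a detail the paper leaves implicit.
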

\begin{proof}
 We have $s_1(\T_{\B})\cdot H_{\B} = -c_1 $ and 
 $ s_2(\T_{\B}) = c_1^2-c_2=12\chi(\O_{\B}) -2c_2 $. 
So, by Proposition \ref{prop: double point formula}, we obtain 
\begin{equation}
2(2d-1) = \lambda^2-10\lambda-12\chi(\O_{\B})+2c_2+5c_1 .
\end{equation}
Now, by Propositions \ref{prop: hilbert polynomial} and
 \ref{prop: segre and chern classes}, 
we obtain
\begin{equation}
d\Delta = 2a+4,\quad  
 \Delta = (g^2+(-2a-4)g-16d+a^2-4a+75)/8 ,
\end{equation}
and then we conclude by Lemma \ref{lemma: r=2 B nondegenerate}.
\end{proof}
\begin{proposition}\label{prop: r=2 B nondegenerate}
If $r=2$,  $n=6$ and $\langle \B\rangle=\PP^6$ 
 then  one of the following cases holds:
\begin{enumerate}[(I)]
\setcounter{enumi}{8} 
\item $a=0$,  $\lambda=7$,  $g=1$, $\B$ is an elliptic scroll $\PP_{C}(\E)$ with $e(\E)=-1$;
\item $a=0$,  $\lambda=8$,  $g=3$, $\B$ is the blow-up of $\PP^2$ at $8$ points $p_1\ldots,p_8$, $|H_{\B}|=|4H_{\PP^2}-p_1-\cdots-p_8|$;
\item $a=1$,  $\lambda=7$,  $g=2$, $\B$ is the blow-up of $\PP^2$ at $6$ points $p_0\ldots,p_5$, $|H_{\B}|=|4H_{\PP^2}-2p_0-p_1-\cdots-p_5|$;
\item $a=2$,  $\lambda=6$,  $g=1$, $\B$ is the blow-up of $\PP^2$ at $3$ points $p_1,p_2,p_3$, $|H_{\B}|=|3H_{\PP^2}-p_1-p_2-p_3|$;
\item $a=3$,  $\lambda=5$,  $g=0$, $\B$ is a rational normal scroll.
\end{enumerate}
\end{proposition}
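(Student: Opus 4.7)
The plan is to enumerate, via Lemma \ref{lemma: r=2 B nondegenerate}, the five admissible quadruples $(a,\lambda,g,\chi(\O_{\B}))$, and then in each case identify the smooth nondegenerate linearly normal surface $\B\subset\PP^{6}$ by invoking the classification of smooth surfaces of small degree and small sectional genus. Throughout, $\B$ also satisfies the strong $QEL$-condition $\delta=0$ coming from Proposition \ref{prop: B is QEL}.

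Two of the cases should be handled directly. For $(a,\lambda,g)=(3,5,0)$, $\B$ has degree $\lambda=\mathrm{codim}_{\PP^{6}}(\B)+1$, so it is a surface of minimal degree, hence a rational normal scroll by the Del~Pezzo--Bertini classification. For $(a,\lambda,g)=(2,6,1)$, $\B$ is a smooth linearly normal surface of degree $6$ and sectional genus $1$ in $\PP^{6}$; these invariants force $\B$ to be anticanonically embedded, so it is a del~Pezzo surface of degree $6$, namely $\PP^{2}$ blown up at three general points with $|3H_{\PP^{2}}-p_{1}-p_{2}-p_{3}|$. For the remaining three cases I would match the prescribed invariants $(\lambda,g,\chi)$ against the classification lists of \cite{ionescu-smallinvariants}, \cite{ionescu-smallinvariantsII}, \cite{ionescu-smallinvariantsIII}, \cite{fania-livorni-nine}, \cite{fania-livorni-ten}, \cite{besana-biancofiore-deg11}, \cite{ionescu-degsmallrespectcodim}. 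The surviving entries should be: for $(1,7,2,1)$, the blow-up of $\PP^{2}$ at six points with $|H_{\B}|=|4H_{\PP^{2}}-2p_{0}-p_{1}-\cdots-p_{5}|$; for $(0,8,3,1)$, the Bordiga surface, i.e.\ $\PP^{2}$ blown up at eight points embedded by $|4H_{\PP^{2}}-p_{1}-\cdots-p_{8}|$; and for $(0,7,1,0)$, a scroll $\PP_{C}(\E)$ over an elliptic curve (forced by $g=1$ together with $\chi(\O_{\B})=0$), with the invariant $e(\E)=-1$ singled out by the degree $\lambda=7$.

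The main difficulty will lie in correctly reading the cited classifications and isolating the unique entry compatible with each tuple of invariants. To exclude spurious candidates I would rely on the auxiliary constraints already in hand: the $QEL$-property from Proposition \ref{prop: B is QEL}, which forces $\Sec(\B)$ to be a hypersurface of degree $2d-1$ in $\PP^{6}$, together with the concrete values of $(d,\Delta)$ listed in Lemma \ref{lemma: double point formula r=2}. These ancillary constraints should rule out the nonminimal or otherwise incompatible surfaces that might appear at the same numerical height in the classification lists, leaving exactly the five cases claimed.
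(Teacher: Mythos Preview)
Your approach is essentially the same as the paper's: reduce to the five numerical cases via Lemma~\ref{lemma: r=2 B nondegenerate} and then identify $\B$ from the classification of surfaces of low degree. The paper's proof is extremely terse, citing \cite{crauder-katz-1989} for $a=0$, \cite[Proposition~6.2]{note} for $a=1$, and \cite{ionescu-smallinvariants} for $a\in\{2,3\}$; your treatment of $a=3$ (minimal degree) and $a=2$ (del Pezzo sextic) is more self-contained and arguably cleaner than simply pointing to \cite{ionescu-smallinvariants}.

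One small imprecision: for $(a,\lambda,g,\chi)=(0,7,1,0)$ you say that $e(\E)=-1$ is ``singled out by the degree $\lambda=7$.'' The degree alone does not force $e=-1$ (writing $H\equiv C_0+bf$ on $\PP_C(\E)$, the equation $-e+2b=7$ admits other odd values of $e$). What actually pins down $e=-1$ is the further requirement that $\B\subset\PP^6$ be cut out scheme-theoretically by quadrics (equivalently, that it arise as the base locus of a special Cremona transformation); this is exactly why the paper defers to \cite{crauder-katz-1989} for the $a=0$ cases rather than to a bare surface classification. If you want to keep your more uniform argument, you should either invoke \cite{crauder-katz-1989} at this point or supply the extra step showing that an elliptic scroll of degree $7$ in $\PP^6$ with $e\geq 1$ cannot be cut out by quadrics.
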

\begin{proof}
For $a=0$, $a=1$ and $a\in\{2,3\}$ 
the statement follows, respectively, from \cite{crauder-katz-1989},
 \cite[Proposition~6.2]{note} and \cite{ionescu-smallinvariants}. 
\end{proof}
\section{Case of dimension 3}\label{sec: dim 3}
Proposition \ref{prop: possibili casi C1} follows from: 
            \cite[Proposition~1.3 and 3.4]{russo-qel1},
            \cite{fujita-3-fold},
            \cite{ionescu-russo-conicconnected},
            \cite[page~62]{fujita-polarizedvarieties} and
            \cite{ciliberto-mella-russo}.
\begin{proposition}\label{prop: possibili casi C1} 
If $r=3$, then either $n=8$, $d\geq2$, $\langle \B \rangle = \PP^8$, or  
one of the following cases holds:
\begin{enumerate}[(I)]
 \setcounter{enumi}{13} 
 \item\label{case C1 a}  $n=5$, $d=1$, $\delta=3$, $\B=Q^3\subset\PP^4\subset\PP^5$ is a quadric;
 \item\label{case C1 b}  $n=6$, $d=1$, $\delta=2$, $\B=\PP^1\times\PP^2\subset\PP^5\subset\PP^6$;
 \item\label{case C1 c}  $n=7$, $d=1$, $\delta=1$, $\B\subset\PP^6$ is as 
in one of the following cases:
         \begin{enumerate}[($\ref{case C1 c}_1$)]
         \item\label{case C1 c1} $\PP_{\PP^1}(\O(1)\oplus\O(1)\oplus\O(2))$;
         \item\label{case C1 c2} linear section of $\GG(1,4)\subset\PP^9$; 
         \end{enumerate}
 \item\label{case C1 d}  $n=7$, $d=2$, $\delta=1$, $\B$ is a hyperplane section of $\PP^2\times\PP^2\subset\PP^8$;
 \item\label{case C1 e}  $n=8$,  $d=1$, $\delta=0$, $\B\subset\PP^7$ is an $OADP$-variety, 
 i.e. $\B$ is as in one of the following cases:
 \begin{enumerate}[($\ref{case C1 e}_1$)]
 \item\label{case C1 e1} $\PP_{\PP^1}(\O(1)\oplus\O(1)\oplus\O(3))$ or
                         $\PP_{\PP^1}(\O(1)\oplus\O(2)\oplus\O(2))$; 
 \item\label{case C1 e2} Edge variety of degree $6$ (i.e. $\PP^1\times\PP^1\times\PP^1$) or 
                         Edge variety of degree $7$;
 \item\label{case C1 e3} $\PP_{\PP^2}(\E)$, where $\E$ 
is a vector bundle with  $c_1(\E)=4$ and $c_2(\E)=8$, given as an extension by
the following exact sequence
$0\rightarrow\O_{\PP^2}\rightarrow\E\rightarrow 
\I_{\{p_1,\ldots,p_8\},\PP^2}(4)\rightarrow0$. 
 \end{enumerate}
\end{enumerate}
\end{proposition}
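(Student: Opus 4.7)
\emph{Proof proposal.} My plan is to deduce the classification directly from Proposition~\ref{prop: B is QEL} combined with the geometric classifications of $QEL$-threefolds cited in the statement. Since $\B$ is a $QEL$-variety of type $\delta=2r+2-n=8-n$ with $\delta\in[0,r]=[0,3]$, we must have $n\in\{5,6,7,8\}$. Moreover, $\Sec(\B)$ is a hypersurface of degree $2d-1$, which is a hyperplane (so $\B$ is degenerate) exactly when $d=1$, while $\B$ is nondegenerate and $d\geq 2$ otherwise. Hence, for each admissible $n$ the analysis splits into ``$\B\subset\PP^{n-1}$ with $d=1$'' versus ``$\B\subset\PP^n$ nondegenerate with $d\geq 2$''.

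First I would dispatch $n\in\{5,6\}$ essentially from the QEL property alone. For $n=5$ one has $\delta=r=3$; since the entry locus through a general secant point is a smooth $\delta$-dimensional quadric contained in the irreducible $r$-fold $\B$, it must coincide with $\B$, so $\B=Q^3$ is itself a quadric and lies in some $\PP^4\subset\PP^5$, forcing $d=1$ and giving case (\ref{case C1 a}). For $n=6$, $\delta=2$, and Russo's classification \cite[Propositions~1.3 and 3.4]{russo-qel1} of $QEL$-varieties with $\delta\geq r-1$ leaves only the Segre threefold $\PP^1\times\PP^2\subset\PP^5\subset\PP^6$, again with $d=1$, giving case (\ref{case C1 b}).

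For $n=7$, $\delta=1$ makes $\B$ an $LQEL$-variety, equivalently a conic-connected threefold. If $d=1$, then $\B\subset\PP^6$ and Fujita's classification of polarized threefolds of low $\Delta$-genus \cite{fujita-3-fold}, \cite[page~62]{fujita-polarizedvarieties} together with \cite{ciliberto-mella-russo} isolates the two subcases (\ref{case C1 c1})--(\ref{case C1 c2}). If $d\geq 2$, then $\B$ is nondegenerate in $\PP^7$ and the Ionescu-Russo classification \cite{ionescu-russo-conicconnected} of conic-connected threefolds picks out only the hyperplane section of $\PP^2\times\PP^2\subset\PP^8$; here $d=2$ is forced because this threefold's secant is a cubic hypersurface, giving case (\ref{case C1 d}). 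Finally, for $n=8$, $\delta=0$, $\B$ is an $OADP$-threefold: if $d\geq 2$, $\B$ is nondegenerate in $\PP^8$, which is the ``hard case'' deliberately excluded from the statement and treated by the subsequent lemmas; if $d=1$, $\B$ spans a hyperplane $\PP^7\subset\PP^8$ in which it is a smooth $OADP$-threefold, and \cite[Theorems~4.10 and 7.1]{ciliberto-mella-russo} gives exactly the three families (\ref{case C1 e1})--(\ref{case C1 e3}).

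The main obstacle is the $n=7$ analysis, where one must stitch together two different classifications (Fujita's for low-degree polarized threefolds and Ionescu-Russo's for conic-connected threefolds) and verify that no further examples survive once the sharper constraint that $\Sec(\B)$ is a hypersurface of precise degree $2d-1$ is imposed; in particular one must rule out spurious conic-connected threefolds in $\PP^7$ and confirm that $d=2$ is the only value compatible with the secant geometry of the Segre section.
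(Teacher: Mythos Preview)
Your proposal is correct and follows essentially the same approach as the paper: the paper's own proof is nothing more than the one-line assertion that the proposition follows from \cite[Propositions~1.3 and 3.4]{russo-qel1}, \cite{fujita-3-fold}, \cite{ionescu-russo-conicconnected}, \cite[page~62]{fujita-polarizedvarieties} and \cite{ciliberto-mella-russo}. You have supplied exactly the connective tissue that the paper leaves implicit---splitting by $n$ via $\delta=8-n\in\{0,1,2,3\}$ and then by degenerate ($d=1$) versus nondegenerate ($d\geq2$) via $\deg(\Sec(\B))=2d-1$---and you invoke the same references at the same places.
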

In the following we denote by 
$\Lambda\subsetneq C\subsetneq S\subsetneq \B$ 
a sequence of general linear sections of $\B$.
\begin{lemma}\label{lemma: r=3 B nondegenerate}
If $r=3$, $n=8$ and $\langle \B \rangle = \PP^8$,
 then one of the following cases holds:
\begin{enumerate}[(A)]
\item \label{a=0,lambda=13}
 $a=0$, $\lambda=13$, $g=8$, $K_S\cdot H_S=1$, $K_S^2=-1$; 
\item \label{a=1,lambda=12}
 $a=1$, $\lambda=12$, $g=7$, $K_S\cdot H_S=0$, $K_S^2=0$; 
\item \label{a geq2}
 $0\leq a\leq6$, $\lambda=12-a$, $g=6-a$, $K_S\cdot H_S=-2-a$.
\end{enumerate}
\end{lemma}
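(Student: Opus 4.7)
Following the strategy of Lemma~\ref{lemma: r=2 B nondegenerate}, the first step is to specialize Proposition~\ref{prop: hilbert polynomial} to $r=3$, $n=8$, $\epsilon=0$, yielding
\[ \chi(\O_{\B}) = 2\lambda - g + a - 17; \]
the proof of that proposition also gives $h^i(\I_{\B}(k))=0$ for all $i,k\geq 1$, so $h^0(\O_{\B}(1))=9$ and $h^0(\O_{\B}(2))=36-a$. From Proposition~\ref{prop: segre and chern classes} we have $c_1 = 2\lambda - 2g + 2 = -K_{\B}\cdot H_{\B}^2$, and adjunction $K_S = (K_{\B}+H_{\B})|_S$ then gives $K_S\cdot H_S = -c_1 + \lambda = 2g-2-\lambda$, accounting for the prescribed values of $K_S\cdot H_S$ in (A)--(C) once the triples are known.

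To enumerate the admissible triples $(\lambda, g, a)$, I would combine $g\geq 0$ with Castelnuovo's classical bound $g\leq\pi_0(\lambda, 6)$ applied to the general curve section $C=\B\cap\PP^6$, a smooth nondegenerate curve in $\PP^6$ of degree $\lambda$ and genus $g$. These inequalities restrict us to a finite list of candidate tuples $(\lambda, g, a)$ (via the relation above), from which the refined Castelnuovo-type bounds of \cite{ciliberto-hilbertfunctions} and \cite[Proposition~1.8]{mella-russo-baselocusleq3} exclude every triple outside (A), (B), (C). One verifies a posteriori that $\chi(\O_{\B}) = 1$ in each surviving case.

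For the value of $K_S^2$ in cases (A) and (B), I would apply Noether's formula $12\chi(\O_S) = K_S^2 + c_2(T_S)$ on the surface $S$. Riemann--Roch on $S$, together with linear normality of $S\subset\PP^7$ and $\chi(\O_S(1)) = 8$, yields $\chi(\O_S) = 7-\lambda+g$, while the normal bundle sequence on $S$ gives $c_2(T_S) = c_2 - c_1 + \lambda$. Proposition~\ref{prop: segre and chern classes} then expresses $c_2 = c_2(T_{\B})\cdot H_{\B}$ as a function of $\lambda$, $g$ and $d\Delta$, and $d\Delta$ is pinned down in cases (A) and (B) via the double point formula (Proposition~\ref{prop: double point formula}) applied to $\B$, combined with the Hirzebruch--Riemann--Roch constraints for $3$-folds.

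The principal obstacle is the middle step: the raw Castelnuovo bound leaves a handful of spurious $(\lambda, a)$ pairs (especially at small $\lambda$) with $g$ strictly below $\pi_0(\lambda, 6)$, and their exclusion requires the delicate Hilbert-function refinements together with the $QEL$ structure of $\B$ from Proposition~\ref{prop: B is QEL} and the fact that $\B$ is cut out by quadrics. A secondary technical point is determining $d\Delta$ precisely in cases (A) and (B) in order to extract $K_S^2$ from Noether's formula.
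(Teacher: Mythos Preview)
Your outline captures the opening moves correctly (the Hilbert-polynomial relation $\chi(\O_{\B})=2\lambda-g+a-17$, the adjunction formula $K_S\cdot H_S=2g-2-\lambda$, and the Noether/normal-bundle route to $K_S^2$), but the central step is not merely ``delicate'': your claim that the refined Castelnuovo bounds of \cite{ciliberto-hilbertfunctions} together with \cite[Proposition~1.8]{mella-russo-baselocusleq3} exclude every triple outside (A)--(C) is false. Those bounds only yield, for $a=0$, the range $13\leq\lambda\leq14$ with $g\leq 2\lambda-18$; the three spurious possibilities $(\lambda,g)=(14,8),(14,9),(14,10)$ survive and must be killed individually. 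The paper does this with arguments of a different nature: for $(14,8)$ one invokes the key Claim that $K_S\cdot H_S\leq0$ (with $K_S\nsim0$) forces case~(C); for $(14,9)$ one uses Ionescu's adjunction theory and the observation $(K_{\B}+4H_{\B})\cdot K_S<0$ together with $\chi(\O_{\B})$; for $(14,10)$ one carries out a hands-on analysis of the mobile part of $|K_S|$, using that $\B$ is cut out by quadrics. None of this is a Hilbert-function refinement.

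More structurally, the paper's proof is organized around the Claim just mentioned: when $K_S\cdot H_S\leq0$ and $K_S\nsim0$ one shows, via $P_{\B}(-1)=0$ and $P_{\B}(0)=1-q$ with $q=h^1(\O_S)$, that $g=-5q-a+6$ and $\lambda=-3q-a+12$; the cases $q=1$ are then excluded by Fujita's classification of low-$\Delta$-genus varieties. This Claim is what handles all $a\geq2$ in one stroke (there $\lambda\leq10$ forces $K_S\cdot H_S<0$ by Castelnuovo), and it also disposes of the $K_S\cdot H_S<0$ subcase when $a=0$. Your proposal has no analogue of this step. Finally, for $K_S^2$ you do not need the double-point formula: when $a=0$ one has $(d,\Delta)=(5,1)$ automatically, and the paper's direct identity $K_S^2=-22\lambda+12g+d\Delta-12a+184$ then gives $K_S^2$ immediately; for $a=1$ the paper defers to \cite[Proposition~6.4]{note}.
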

\begin{proof}
Firstly we note that,
from the exact sequence
 $0\rightarrow\mathcal{T}_{S}\rightarrow\mathcal{T}_{\B}|_{S}\rightarrow\O_{S}(1)\rightarrow0$,
 we deduce 
$c_2=c_2(S)+c_1(S)=12\chi(\O_{S})-K_{S}^2-K_{S}\cdot H_{S}$
and hence
\begin{equation}
K_S^2=14\lambda+12\chi(\O_S)-12g+d\Delta-116 = -22\lambda+12g+d\Delta-12a+184.
\end{equation}
Secondly we note that (see \cite[Lemma~6.1]{note}), putting 
 $h_{\Lambda}(2):=h^0(\PP^5,\O(2))-h^0(\PP^5,\I_{\Lambda}(2))$,
we have  %(see also \cite[page~29]{ciliberto-hilbertfunctions})
%   We have ($c=5$)
%   $$h^0(\PP^c,\I_{\Lambda}(2))=\left\{\begin{array}{ll} h^0(\PP^c,\O(2))-\lambda & \mbox{if }\lambda\leq 2c+1, \\\leq h^0(\PP^c,\O(2))-(2c+1), & \mbox{if } \lambda>2c+1. \end{array} \right.$$
%   Hence
%   $$ h_{\Lambda}(2) \geq \left\{\begin{array}{ll} \lambda & \mbox{if }\lambda\leq 2c+1, \\ 2c+1, & \mbox{if } \lambda>2c+1, \end{array} \right. $$
%   i.e.  $$ h_{\Lambda}(2) \geq \min\{\lambda,2c+1\}. $$
\begin{equation}\label{hilbert-function}
 \mathrm{min}\{\lambda,11\} \leq h_{\Lambda}(2)\leq 
         21-h^0(\PP^8,\I_{\B}(2))=12-a.
\end{equation}
Now we establish the following:
\begin{claim}\label{claim: KsHs<0} 
If $K_S\cdot H_S\leq0$ and $K_S\nsim 0$, then $\lambda=12-a$ and $g=6-a$.
\end{claim}
\begin{proof}[Proof of the Claim]
Similarly to \cite[Case~6.1]{note}, we obtain that
$P_{\B}(-1)=0$ and $P_{\B}(0)=1-q$, where 
$q:=h^1(S,\O_S)=h^1(\B,\O_{\B})$;  in particular $g=-5q-a+6$ and  $\lambda=-3q-a+12$.
Since $g\geq0$  we have $5q\leq 6-a$ and
the possibilities are:
if $a\leq1$ then $q\leq1$; if $a\geq 2$ then $q=0$.
If $(a,q)=(0,1)$ then $(g,\lambda)=(1,9)$ 
and the case is excluded by 
 \cite[Theorem~12.3]{fujita-polarizedvarieties}\footnote{Note that 
$\B$ cannot be a scroll over a curve 
(this follows from (\ref{eq: relation scroll}) and (\ref{eq: second relation scroll}) below
   and also it follows from \cite[Proposition~3.2(i)]{mella-russo-baselocusleq3}).}; 
if $(a,q)=(1,1)$ then $(g,\lambda)=(0,8)$ 
and the case is excluded by 
\cite[Theorem~12.1]{fujita-polarizedvarieties}.
Thus we have $q=0$ and hence
$g=6-a$ and $\lambda=12-a$;
in  particular we have $a\leq 6$.
\end{proof}
Now we discuss the cases according 
to the value of  $a$.
\begin{case}[$a=0$]
It is clear that $\varphi$ must be of type $(2,5)$ and hence 
$K_S^2=-22\lambda+12g+189$. 
%   \color{red}   If $h^1(\O_{\B})=0$, we can use the following argument:   Since $\B$ is cut out by quadrics and    it cannot be a scroll over a curve,    by \cite[Proposition~5.2]{ciliberto-mella-russo} it follows that   the support of the base locus    of the general tangential projection $\tau_{x,\B}:\B\dashrightarrow W_{x,\B}\subset\PP^4$,    consists of $0\leq k< \infty$ lines through  $x$.   Now, just as pointed out in the first proof of Proposition \ref{prop: 3-fold in P8},    we deduce  the relation $\lambda-8+k=\deg(W_{x,\B})$ and also that    $\deg(W_{x,\B})\leq d=5$.    Thus    $\lambda-8+k\leq 5$ and in particular we have  $\lambda\leq13.$  \color{black}
By Claim \ref{claim: KsHs<0}, if $K_S\cdot H_S=2g-2-\lambda<0$,
we fall into case (\ref{a geq2}). 
So we suppose that $K_S\cdot H_S\geq0$, namely that $g\geq\lambda/2+1$.
From Castelnuovo's bound it follows that
 $\lambda\geq12$  and if $\lambda=12$ then  
$K_S\cdot H_S=0$,  $g=7$ and hence $K_S^2=9$.
Since this is impossible by Claim \ref{claim: KsHs<0}, we conclude that
 $\lambda\geq 13$.
Now by (\ref{hilbert-function}) it follows that
 $11\leq h_{\Lambda}(2)\leq12$, but
if $h_{\Lambda}(2)=11$ 
from Castelnuovo Lemma \cite[Lemma~1.10]{ciliberto-hilbertfunctions}  
we obtain a contradiction.
Thus we have $h_{\Lambda}(2)=12$ and
$h^0(\PP^5,\I_{\Lambda}(2))=h^0(\PP^8,\I_{\B}(2))=9$.
So from \cite[Theorem~3.1]{ciliberto-hilbertfunctions}  we deduce 
that $\lambda\leq 14$ and furthermore, 
 by the refinement of Castelnuovo's bound contained in 
\cite[Theorem~2.5]{ciliberto-hilbertfunctions}, 
we obtain $g\leq 2\lambda-18$.
In summary we have the following possibilities:
\begin{enumerate}[(i)]
 \item\label{case: T1} $\lambda=13$, $g=8$, $K_S\cdot H_S=1$, $\chi(\O_S)=2$, $K_S^2=-1$;
 \item\label{case: T2} $\lambda=14$, $g=8$, $K_S\cdot H_S=0$, $\chi(\O_S)=-1$, $K_S^2=-23$;
 \item\label{case: T3} $\lambda=14$, $g=9$, $K_S\cdot H_S=2$, $\chi(\O_S)=1$, $K_S^2=-11$;
 \item\label{case: T4} $\lambda=14$, $g=10$, $K_S\cdot H_S=4$, $\chi(\O_S)=3$, $K_S^2=1$.
\end{enumerate}
Case (\ref{case: T1}) coincides with case (\ref{a=0,lambda=13}). 
Case (\ref{case: T2}) is excluded by Claim \ref{claim: KsHs<0}.
In the circumstances of case (\ref{case: T3}), we have $h^1(S,\O_S)=h^2(S,\O_S)=h^0(S,K_S)$.
If $h^1(S,\O_S)>0$, since 
$(K_{\B}+4H_{\B})\cdot K_S=K_S^2+3 K_S\cdot H_S=-5<0$,
we see that $K_{\B}+4H_{\B}$ is not nef and then 
we obtain a contradiction by \cite{ionescu-adjunction}.
If $h^1(S,\O_S)=0$, then we also have $h^1(\B,\O_{\B})=h^2(\B,\O_{\B})=0$ and hence 
$\chi(\O_{\B})=1-h^3(\B,\O_{\B})\leq 1$, against the fact that $\chi(\O_{\B})=2\lambda-g-17=2$. 
Thus case (\ref{case: T3}) does not occur.
Finally,  in the circumstances of case (\ref{case: T4}), note 
that $h^0(S,K_S)=2+h^1(S,\O_S)\geq2$ and we  write 
$|K_S|=|M|+F$, where $|M|$ is the mobile  part of the linear system $|K_S|$ 
and $F$ is the fixed part. If $M_1=M$ 
 is a general member of $|M|$, there exists $M_2\in|M|$
having no common irreducible components with $M_1$ and 
so $M^2=M_1\cdot M_2=\sum_{p}\left(M_1\cdot M_2\right)_{p}\geq0$;
furthermore, by using Bertini Theorem, we see that
 $\sing(M_1)$ consists of points $p$   such that 
the intersection multiplicity $\left(M_1\cdot M_2\right)_{p}$ of $M_1$ and $M_2$ in $p$ is at least $2$.
By definition, we also have $M\cdot F\geq0$ and so we deduce 
$2p_a(M)-2=M\cdot (M+K_S)= 2 M^2+ M\cdot F\geq 0$, from which 
$p_a(M)\geq 1$ and $p_a(M)=2$ if $F=0$.
On the other hand,  we have $M\cdot H_S\leq K_S\cdot H_S=4$ and, 
since $S$ is cut out by quadrics,
 $M$ does not contain  planar curves of degree $\geq3$.
If  $M\cdot H_S=4$, then  $F=0$,
 $M^2=1$ and $M$ is  a (possibly disconnected) smooth curve;
since $p_a(M)=2$, $M$ is actually disconnected % see \cite[page~315]{hartshorne-ag}
 and so it is a disjoint union  of twisted cubics, conics and lines.  
 But then we obtain  the contradiction that
$p_a(M)=1-\#\{\mbox{connected components of }M\}<0$.
If $M\cdot H_S\leq3$,  then $M$ must be either 
a twisted cubic or a union of conics and lines.  In all these cases
we again obtain the contradiction that 
$p_a(M)=1-\#\{\mbox{connected components of }M\}\leq 0$.
Thus case (\ref{case: T4}) does not occur.
\end{case}
\begin{case}[$a=1$] By \cite[Proposition~6.4]{note}   
we fall into case (\ref{a=1,lambda=12}) or (\ref{a geq2}).
 \end{case}
\begin{case}[$a\geq2$] By (\ref{hilbert-function}) it follows that
$\lambda\leq 10$ and by  Castelnuovo's bound  it follows that  
$K_S\cdot H_S\leq -4<0$. Thus, by Claim \ref{claim: KsHs<0}
we fall into case (\ref{a geq2}).
\end{case}
\end{proof}
Now we apply the double point formula 
(Proposition \ref{prop: double point formula}) 
in order to obtain additional numerical restrictions 
under the hypothesis of Lemma \ref{lemma: r=3 B nondegenerate}. 
\begin{lemma}\label{lemma: double point formula}
If $r=3$, $n=8$ and $\langle \B\rangle=\PP^8$, then
$$
K_{\B}^3=\lambda^2+23\lambda-24g-(7d+1)\Delta-4d+36a-226 .
$$
\end{lemma}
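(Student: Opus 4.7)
The plan is to specialize Proposition \ref{prop: double point formula} to $r=3$ (which applies because $\delta=2r+2-n=0$) and then eliminate every intermediate invariant using (i) the formulas of Proposition \ref{prop: segre and chern classes}, (ii) Noether's formula on $\B$, and (iii) adjunction together with the expression for $K_S^2$ recalled at the beginning of the proof of Lemma \ref{lemma: r=3 B nondegenerate}.

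For $r=3$, Proposition \ref{prop: double point formula} reads
$$2(2d-1)=\lambda^{2}-s_{3}(\T_{\B})-7\,s_{2}(\T_{\B})\cdot H_{\B}-21\,s_{1}(\T_{\B})\cdot H_{\B}^{2}-35\,\lambda.$$
Using the inverse relation $s(\T_{\B})=c(\T_{\B})^{-1}$, with $c_{i}=c_{i}(\T_{\B})$ and $c_{1}=-K_{\B}$, I would expand
$s_{1}=-c_{1}$, $s_{2}=c_{1}^{2}-c_{2}$, $s_{3}=-c_{1}^{3}+2c_{1}c_{2}-c_{3}$. Noether's formula on the threefold $\B$ yields $K_{\B}\cdot c_{2}(\T_{\B})=-24\,\chi(\O_{\B})$, so that the three Segre intersections reduce to
$$s_{1}(\T_{\B})\cdot H_{\B}^{2}=K_{\B}\cdot H_{\B}^{2},\quad s_{2}(\T_{\B})\cdot H_{\B}=K_{\B}^{2}\cdot H_{\B}-c_{2}(\T_{\B})\cdot H_{\B},\quad s_{3}(\T_{\B})=K_{\B}^{3}+48\,\chi(\O_{\B})-c_{3}(\T_{\B}).$$

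Next I would plug in the values of $c_{1}(\T_{\B})\cdot H_{\B}^{2}$, $c_{2}(\T_{\B})\cdot H_{\B}$, $c_{3}(\T_{\B})$ from Proposition \ref{prop: segre and chern classes} with $r=3$, $n=8$ and $\epsilon=0$, and the value of $\chi(\O_{\B})=2\lambda-g+a-17$ coming from Proposition \ref{prop: hilbert polynomial}. The only remaining non-elementary intersection number is $K_{\B}^{2}\cdot H_{\B}$; I would extract it by adjunction on a smooth hyperplane section $S\subset \B$, namely
$$K_{S}^{2}=(K_{\B}+H_{\B})^{2}\cdot H_{\B}=K_{\B}^{2}\cdot H_{\B}+2\,K_{\B}\cdot H_{\B}^{2}+\lambda,$$
and then invoke the identity $K_{S}^{2}=-22\lambda+12g+d\Delta-12a+184$ obtained inside the proof of Lemma \ref{lemma: r=3 B nondegenerate} (this identity relies only on the exact sequence $0\to \T_{S}\to \T_{\B}|_{S}\to \O_{S}(1)\to 0$ and Riemann--Roch on $S$, so it is available here without further hypotheses).

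With these substitutions, the right-hand side of the double point formula becomes a polynomial in $\lambda$, $g$, $d$, $\Delta$, $a$ and $K_{\B}^{3}$. Solving for $K_{\B}^{3}$ produces the asserted identity. The only real work is the bookkeeping in the final collection step; no conceptual difficulty is expected beyond carefully tracking the coefficients of $\lambda$, $g$, $d\Delta$, $\Delta$, $a$ and the constant term.
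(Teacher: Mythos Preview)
Your proposal is correct and follows essentially the same route as the paper: specialize the double point formula, rewrite the Segre classes of $\T_{\B}$ in terms of $K_{\B}$, $c_2$, $c_3$ and $\chi(\O_{\B})$, and then substitute the values from Propositions \ref{prop: hilbert polynomial} and \ref{prop: segre and chern classes}. The only cosmetic difference is how $K_{\B}^{2}\cdot H_{\B}$ is eliminated: the paper does it directly via Riemann--Roch on the threefold (using $\chi(\O_{\B}(H_{\B}))=9$), whereas you pass through adjunction on $S$ and the $K_S^2$ identity from Lemma \ref{lemma: r=3 B nondegenerate}; since that identity itself encodes the same Riemann--Roch information (through $\chi(\O_S)$, hence ultimately through Proposition \ref{prop: hilbert polynomial}), the two computations are equivalent.
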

\begin{proof}
We have (see \cite[App. A, Exercise~6.7]{hartshorne-ag}):
\begin{eqnarray*}
 s_1(\T_{\B})\cdot H_{\B}^2 &=& -c_1(\B)\cdot H_{\B}^2=K_{\B}\cdot H_{\B}^2 , \\
 s_2(\T_{\B})\cdot H_{\B} &=& c_1(\B)^2\cdot H_{\B}-c_2(\B)\cdot H_{\B} 
 = K_{\B}^2\cdot H_{\B}-c_2(\B)\cdot H_{\B} \\
 &=& 3K_{\B}\cdot H_{\B}^2-2H_{\B}^3-2c_2(\B)\cdot H_{\B}+12\left(\chi(\O_{\B}(H_{\B}))-\chi(\O_{\B})\right), \\
 s_3(\T_{\B}) &=& -c_1(\B)^3+2c_1(\B)\cdot c_2(\B)-c_3(\B)
=K_{\B}^3+48\chi(\O_{\B})-c_3(\B).
\end{eqnarray*}
Hence, applying the double point formula and using 
 the relations 
$\chi(\O_{\B})=2\lambda-g+a-17$, $\chi(\O_{\B}(H_{\B}))=9$, we obtain:
\begin{eqnarray*}
4d-2 &=& 2\,\deg(\Sec(\B))\\
&=& \deg(\B)^2-s_3(\T_{\B})-7\,s_2(\T_{\B})\cdot H_{\B}-21\,s_1(\T_{\B})\cdot H_{\B}^2-35\,H_{\B}^3 \\
 &=& \deg(\B)^2-21\,\deg(\B)-42\,K_{\B}\cdot H_{\B}^2+14\,c_2(\B)\cdot H_{\B}-K_{\B}^3 \\
 && +c_3(\B)-84\,\chi(\O_{\B}(H_{\B}))+36\,\chi(\O_{\B}) \\
 &=& -K_{\B}^3+\lambda^2+23\lambda-24g-(7d+1)\Delta+36a-228.
\end{eqnarray*} 
\end{proof}
\begin{lemma}\label{lemma: quadric fibration}
If $r=3$, $n=8$, $\langle \B\rangle=\PP^8$ and
 $\B$ is a quadric fibration over a curve,
then  one of the following cases holds: 
\begin{itemize}
\item $a=3$, $\lambda=9$, $g=3$, $d=3$, $\Delta=5$; 
\item $a=4$, $\lambda=8$, $g=2$, $d=2$, $\Delta=10$. 
\end{itemize}
\end{lemma}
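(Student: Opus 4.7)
The plan is to combine two expressions for $K_{\B}^3$---one coming from the quadric-fibration structure, the other from Lemma \ref{lemma: double point formula}---and then to run through the cases of Lemma \ref{lemma: r=3 B nondegenerate}, solving the resulting Diophantine relation.

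First I would observe that in each of the cases (A), (B), (C) of Lemma \ref{lemma: r=3 B nondegenerate} one has $\chi(\O_{\B}) = 2\lambda - g + a - 17 = 1$. Since for a quadric fibration $p\colon\B\to C$ over a smooth curve $C$ with general fiber $F \cong Q^2 \subset \PP^3$ one has $\chi(\O_F) = 1$ and $H^i(F,\O_F) = 0$ for $i > 0$, Leray gives $\chi(\O_{\B}) = \chi(\O_C) = 1 - g(C)$, forcing $C \cong \PP^1$. From $K_F = -2H_F$ one obtains $K_{\B} \sim -2H_{\B} + p^{\ast}M$ for a divisor $M$ on $\PP^1$; intersecting with a general curve section and using adjunction gives $\deg M = g - 1$. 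Since $(p^{\ast}M)^2 = 0$ and $H_{\B}^2 \cdot F = 2$, a direct expansion yields $K_{\B}^3 = -8\lambda + 24(g - 1)$.

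Equating with Lemma \ref{lemma: double point formula} gives
\[ (7d+1)\Delta + 4d \;=\; \lambda^2 + 31\lambda - 48g + 36a - 202. \]
In cases (A) and (B) of Lemma \ref{lemma: r=3 B nondegenerate} the right-hand side equals $-14$ and $14$ respectively, and neither admits a solution with $d \geq 2$ (forced by Proposition \ref{prop: B is QEL}, since $\Sec(\B)$ is a nondegenerate hypersurface of degree $2d - 1 \geq 2$) and $\Delta \geq 1$. Case (C), with $\lambda = 12 - a$ and $g = 6 - a$, reduces to
\[ (7d+1)\Delta + 4d \;=\; a^2 + 29a + 26, \qquad a \in \{0,1,\ldots,6\}. \]

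Finally I would solve this Diophantine equation under the constraints $d \geq 2$; $\Delta \geq 2$ whenever $a \geq 1$ (since $\Delta = 1$ would force $\sS$ linear, hence $a = 0$); and $d \leq 5$ (from the coindex relation $c = (1-2d)r + dn - 3d + 2 = 5 - d \geq 0$). Iterating over $a$ leaves only $(a,d,\Delta) \in \{(1,3,2),(3,3,5),(4,2,10)\}$; the triple $(5,6,4)$ is ruled out by $d \leq 5$, and $a = 6$ is ruled out because $\lambda = 6$ is the minimal degree of a nondegenerate $3$-fold in $\PP^8$, forcing $\B$ to be a rational normal scroll (a $\PP^2$-bundle over $\PP^1$), not a quadric fibration. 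The hard part is to exclude $(1,3,2)$, which the numerics alone do not rule out: here $\sS$ would be a factorial nondegenerate $8$-fold of degree $2$ in $\PP^9$, hence a quadric hypersurface $Q^8$ of coindex $1$, contradicting $c = 5 - d = 2$. This leaves exactly the two cases in the statement.
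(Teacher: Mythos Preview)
Your route differs from the paper's in one key respect. You extract a single relation by equating your $K_{\B}^3=-8\lambda+24(g-1)$ with Lemma~\ref{lemma: double point formula}. The paper derives a \emph{second} independent relation: from $(K_{\B}+2H_{\B})^2\cdot H_{\B}=0$ and the Riemann--Roch expression for $\chi(\O_{\B}(H_{\B}))=9$ it computes $c_2(\B)\cdot H_{\B}$ directly, and comparison with the $c_2$-formula in Proposition~\ref{prop: segre and chern classes} gives
\[
d\Delta \;=\; 23\lambda-16g+12a-180,
\]
which in case~(C) of Lemma~\ref{lemma: r=3 B nondegenerate} becomes simply $d\Delta=5a$. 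Together with your equation this pins down $(d,\Delta)$ uniquely for each $a$ and leaves only $a\in\{3,4,6\}$, with $a=6$ dispatched as you do.

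Your single-equation approach forces two ad~hoc exclusions. The exclusion of $(a,d,\Delta)=(1,3,2)$ via the coindex contradiction is fine: a degree-$2$ hypersurface has $-K_{\reg(\sS)}\sim 8H$, so $c=1\neq 5-d=2$. But the exclusion of $(5,6,4)$ rests on $c=5-d\geq0$, i.e.\ a Kobayashi--Ochiai bound, and you have not justified this when $\sS$ is merely factorial (not smooth, not known to be klt). This is a genuine gap. You can close it either by deriving the second equation above (then $d\Delta=25$ kills $(6,4)$ immediately), or by arguing as for $a=6$: with $a=5$ one has $\lambda=7$, $g=1$, so $\B\subset\PP^8$ is a linearly normal $3$-fold of $\Delta$-genus~$1$, hence a del Pezzo variety---in fact $\Bl_p\PP^3$ embedded by $|2H_{\PP^3}-p|$ (cf.\ Example~\ref{example: a=5})---which is not a quadric fibration over a curve.

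Incidentally, the identification $C\cong\PP^1$ is correct but unnecessary: the relations $(K_{\B}+2H_{\B})^2\cdot H_{\B}=0$ and $(K_{\B}+2H_{\B})^3=0$ already yield $K_{\B}^3=-8\lambda+24g-24$ for any base curve.
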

\begin{proof}
Denote by $\beta:(\B,H_{\B})\rightarrow (Y,H_Y)$ 
the projection over the curve $Y$
such that $\beta^{\ast}(H_Y)=K_{\B}+2H_{\B}$.
We have 
\begin{eqnarray*}
 0&=&\beta^{\ast}(H_Y)^2\cdot H_{\B} 
  = K_{\B}^2\cdot H_{\B}+4K_{\B}\cdot H_{\B}^2+4H_{\B}^3, \\
 0&=& \beta^{\ast}(H_Y)^3= K_{\B}^3+6K_{\B}^2\cdot H_{\B}+12 K_{\B}\cdot H_{\B}^2+8 H_{\B}^3, \\ 
\chi(\O_{\B}(H_{\B})) &=& \frac{1}{12} K_{\B}^2\cdot H_{\B}-\frac{1}{4}K_{\B}\cdot H_{\B}^2+\frac{1}{6}H_{\B}^3+\frac{1}{12}c_2(\B)\cdot H_{\B}+\chi(\O_{\B}), \\ 
\end{eqnarray*}
from which it follows that 
\begin{eqnarray}
K_{\B}^3 &=& -8\lambda+24g-24, \\
c_2(\B)\cdot H_{\B} 
 &=& -36\lambda+26g-12a+298.
\end{eqnarray}
Hence, by Lemma \ref{lemma: double point formula} and 
Proposition \ref{prop: segre and chern classes}, 
 we obtain
\begin{eqnarray}
 d\Delta&=& 23\lambda-16g+12a-180 , \\ 
\Delta+4d&=&\lambda^2-130\lambda+64g-48a+1058 .
\end{eqnarray}
Now the conclusion follows from Lemma 
\ref{lemma: r=3 B nondegenerate},
by observing that 
the case $a=6$ cannot occur. In fact,
if $a=6$, by \cite{ionescu-smallinvariants} 
it follows that $\B$ is a rational normal scroll 
and by a direct calculation (or by Lemma \ref{lemma: scroll over curve}) we see that 
$d=2$ and $\Delta=14$.
\end{proof}
\begin{lemma}\label{lemma: scroll over surface}
If $r=3$, $n=8$, $\langle \B\rangle=\PP^8$ and
 $\B$ is a scroll over a smooth surface $Y$,
then we have:
\begin{eqnarray*}
c_2\left(Y\right) &=& \left(\left(7d-1\right)\lambda^2+\left(177-679d\right)\lambda+\left(292d-92\right)g-28d^2 \right. \\ 
                  && \left. +\left(5554-252a\right)d+36a-1474\right)/\left(2d+2\right), \\ 
\Delta &=& \left(\lambda^2-107\lambda+48g-4d-36a+878\right)/\left(d+1\right) .
\end{eqnarray*}
\end{lemma}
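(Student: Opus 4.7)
The strategy is to exploit two consequences of the scroll structure $\beta\colon\B=\PP(\E)\to Y$, where $\E$ is a rank-two vector bundle on the smooth surface $Y$. The first is the relative adjunction relation $K_{\B}+2H_{\B}\sim\beta^{\ast}(K_Y+c_1(\E))$, whose cube vanishes because $L:=K_Y+c_1(\E)$ is a divisor class on a surface. The second is the short exact sequence $0\to\T_{\B/Y}\to\T_{\B}\to\beta^{\ast}\T_Y\to0$; since $\T_{\B/Y}$ is a line bundle with $c_1(\T_{\B/Y})=2H_{\B}-\beta^{\ast}c_1(\E)$, this controls the Chern classes of $\B$ in terms of those of $Y$.

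To derive the formula for $\Delta$, expand $(K_{\B}+2H_{\B})^3=0$ and use $H_{\B}^3=\lambda$ together with the sectional-genus identity $K_{\B}\cdot H_{\B}^2=2g-2-2\lambda$ to obtain the linear relation
$$K_{\B}^3+6\,K_{\B}^2\cdot H_{\B}=16\lambda-24g+24.$$
Next eliminate $K_{\B}^2\cdot H_{\B}$ via Riemann--Roch on $\B$: one has $\chi(\O_{\B}(H_{\B}))=h^0(\O_{\B}(1))=9$ and $\chi(\O_{\B})=2\lambda-g+a-17$ by Proposition \ref{prop: hilbert polynomial}, and substituting the expression for $c_2(\B)\cdot H_{\B}$ from Proposition \ref{prop: segre and chern classes} (with $n=8$) solves Riemann--Roch for $K_{\B}^2\cdot H_{\B}$ as a polynomial in $\lambda,g,a,d,\Delta$. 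Substituting back yields $K_{\B}^3$ in the same variables; equating with the value of $K_{\B}^3$ supplied by Lemma \ref{lemma: double point formula} produces a single linear equation in $\Delta$, whose unique solution is the claimed formula.

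For the formula for $c_2(Y)$, the tangent sequence gives $c(\T_{\B})=(1+2H_{\B}-\beta^{\ast}c_1(\E))\cdot\beta^{\ast}c(\T_Y)$; extracting the codimension-three part and using both the vanishing of $c_1(\E)\cdot c_2(Y)$ on the two-dimensional base $Y$ and the identity $H_{\B}\cdot\beta^{\ast}([\mathrm{pt}])=1$ for a fiber class collapses the result to the clean identity $c_3(\B)=2\,c_2(Y)$. Equating with the formula for $c_3$ in Proposition \ref{prop: segre and chern classes} at $n=8$ and then substituting the value of $\Delta$ just determined gives the stated rational expression over $2(d+1)$.

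The only real obstacle is bookkeeping. The coefficients appearing in Proposition \ref{prop: segre and chern classes} at $n=8$ are cumbersome, so clearing denominators and collecting like terms to verify that the final numerators coincide with those in the statement requires care. All the geometric inputs---the scroll adjunction, the Grothendieck/Euler relation on a $\PP^1$-bundle, and classical Noether--Riemann--Roch on threefolds---are standard.
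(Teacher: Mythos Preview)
Your proposal is correct and follows essentially the same route as the paper: both expand $(K_{\B}+2H_{\B})^3=0$, use Riemann--Roch together with the expression for $c_2(\B)\cdot H_{\B}$ from Proposition~\ref{prop: segre and chern classes} to write $K_{\B}^3$ in terms of $\lambda,g,a,d,\Delta$, compare with Lemma~\ref{lemma: double point formula} to solve for $\Delta$, and then obtain $c_2(Y)$ from the identity $c_3(\B)=2c_2(Y)$ combined with the $c_3$ formula of Proposition~\ref{prop: segre and chern classes}. Your derivation of $c_3(\B)=2c_2(Y)$ via the relative tangent sequence is slightly more explicit than the paper's one-line justification $c_3(\B)=c_1(\PP^1)c_2(Y)$, but the content is the same.
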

\begin{proof}
Similarly to Lemma \ref{lemma: quadric fibration}, 
denote by $\beta:(\B,H_{\B})\rightarrow (Y,H_Y)$ 
the projection over the surface  $Y$ such that
 $\beta^{\ast}(H_Y)=K_{\B}+2H_{\B}$. Since $\beta^{\ast}(H_Y)^3=0$ we obtain
\begin{eqnarray*}
K_{\B}^3 &=&-8H_{\B}^3-12K_{\B}\cdot H_{\B}^2-6K_{\B}^2\cdot H_{\B} \\
 &=& -30K_{\B}\cdot H_{\B}^2+4H_{\B}^3+6c_2(\B)\cdot H_{\B}-72\chi(\O_{\B}(H_{\B}))+72\chi(\O_{\B}) \\
&=& 130\lambda-72g-6d\Delta+72a-1104.
\end{eqnarray*}
Now  we conclude comparing the last formula
with Lemma \ref{lemma: double point formula} and
using the relation
\begin{equation}
70\lambda-44g+(7d-1)\Delta-596=c_3(\B)=c_1(\PP^1)c_2(Y)=2c_2(Y).
\end{equation}
\end{proof}
\begin{lemma}\label{lemma: scroll over curve}
If $r=3$, $n=8$, $\langle \B\rangle=\PP^8$ and
 $\B$ is a scroll over a smooth curve,
then we have:
 $a=6$, $\lambda=6$, $g=0$, $d=2$, $\Delta=14$.
\end{lemma}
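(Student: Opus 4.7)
The plan is to mirror Lemmas \ref{lemma: quadric fibration} and \ref{lemma: scroll over surface}, exploiting that for a scroll over a curve the adjunction structure imposes strong restrictions. Since $\B=\PP(\E)$ for some rank-$3$ vector bundle $\E$ on a smooth curve $Y$, the standard identity $K_{\B}+3H_{\B}=\beta^{\ast}(K_Y+\det\E)$ combined with $H_Y^2=0$ on $Y$ gives the numerical identities $(K_{\B}+3H_{\B})^2\cdot H_{\B}=0$ and $(K_{\B}+3H_{\B})^3=0$. Expanding these and using $K_{\B}\cdot H_{\B}^2=2g-2-2\lambda$ (adjunction on the general curve section) yields $K_{\B}^2\cdot H_{\B}=3\lambda-12g+12$ and $K_{\B}^3=54g-54$.

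Next, since $\beta$ is a smooth $\PP^2$-fibration, one has $\chi(\O_{\B})=\chi(\O_Y)=1-g_Y$ (via $R^i\beta_{\ast}\O_{\B}=\O_Y$ for $i=0$ and $0$ otherwise), and $c_3(\B)=\chi_{\mathrm{top}}(\B)=3(2-2g_Y)$ by multiplicativity of the topological Euler characteristic. Eliminating $g_Y$ through Proposition \ref{prop: hilbert polynomial} (which specializes to $\chi(\O_{\B})=2\lambda-g+a-17$ when $n=8$) produces $c_3(\B)=12\lambda-6g+6a-102$. Plugging $K_{\B}^3=54g-54$ into Lemma \ref{lemma: double point formula}, and matching this value of $c_3$ against the expression from Proposition \ref{prop: segre and chern classes}, yields the two equations
\begin{align*}
(7d+1)\Delta &= \lambda^2+23\lambda-78g-4d+36a-172, \\
(7d-1)\Delta &= 494-58\lambda+38g+6a.
\end{align*}

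To conclude, I would restrict $(a,\lambda,g)$ to the three possibilities of Lemma \ref{lemma: r=3 B nondegenerate}. Cases (\ref{a=0,lambda=13}) and (\ref{a=1,lambda=12}) are immediately ruled out, since the first equation there forces $(7d+1)\Delta<0$. In case (\ref{a geq2}), substituting $\lambda=12-a$ and $g=6-a$ reduces the system to $(7d-1)\Delta=26(a+1)$ together with $2\Delta=a^2+41a-246-4d$; eliminating $d$ yields a quadratic in $\Delta$ whose only positive integer solution in the range $0\le a\le 6$ is $a=6$, $\Delta=14$, giving $d=2$, $\lambda=6$, $g=0$. The main obstacle is just the careful arithmetic of this final case analysis; everything else reduces to formal manipulation of standard Chern-class and adjunction identities.
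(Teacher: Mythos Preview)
Your proof is correct and follows essentially the same strategy as the paper: both derive $K_{\B}^3=54(g-1)$ from the scroll relations $(K_{\B}+3H_{\B})^2\cdot H_{\B}=0$ and $(K_{\B}+3H_{\B})^3=0$, feed this into Lemma~\ref{lemma: double point formula} to get the equation $(7d+1)\Delta=\lambda^2+23\lambda-78g-4d+36a-172$, and then produce a second independent equation to finish via Lemma~\ref{lemma: r=3 B nondegenerate}.

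The only difference is in how the second equation is obtained. The paper expands $\chi(\O_{\B}(H_{\B}))=9$ via Riemann--Roch to compute $c_2=-35\lambda+30g-12a+294$, and comparing with the $c_2$-formula of Proposition~\ref{prop: segre and chern classes} yields the cleaner relation $d\Delta=22\lambda-20g+12a-176$. You instead compute $c_3$ from $\chi_{\mathrm{top}}(\B)=3\chi_{\mathrm{top}}(Y)$ and $\chi(\O_{\B})=\chi(\O_Y)$, and match it against the $c_3$-formula of Proposition~\ref{prop: segre and chern classes} to get $(7d-1)\Delta=-58\lambda+38g+6a+494$. Your equation is a bit heavier but equally effective; the case analysis against Lemma~\ref{lemma: r=3 B nondegenerate} goes through exactly as you indicate, with $a\leq3$ already excluded in the paper's approach by the sign of $d\Delta$, whereas in yours it is the sign of $(7d+1)\Delta$ (for cases (\ref{a=0,lambda=13}), (\ref{a=1,lambda=12})) and the quadratic (for case (\ref{a geq2})) that do the work.
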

\begin{proof}
We have a projection $\beta:(\B,H_{\B})\rightarrow (Y,H_Y)$ 
 over a curve  $Y$ such that
 $\beta^{\ast}(H_Y)=K_{\B}+3H_{\B}$. By expanding the expressions 
$\beta^{\ast}(H_Y)^2\cdot H_{\B}=0$ and  $\beta^{\ast}(H_Y)^3=0$ we obtain 
$K_{\B}^2\cdot H_{\B}=3\lambda-12g+12$ and 
$K_{\B}^3=54(g-1)$,
and hence by Lemma \ref{lemma: double point formula} we get
\begin{equation}\label{eq: relation scroll}
 \lambda^2+23\lambda-78g-(7d+1)\Delta-4d+36a-172 = 0.
\end{equation}
Also, by expanding the expression $\chi(\O_{\B}(H_{\B}))=9$ we obtain 
$c_2=-35\lambda+30g-12a+294 $
and hence by Proposition \ref{prop: segre and chern classes} we get
\begin{equation}\label{eq: second relation scroll}
 22\lambda-20g-d\Delta+12a-176 = 0.
\end{equation}
Now the conclusion follows from Lemma \ref{lemma: r=3 B nondegenerate}. 
\end{proof}
Finally we conclude our discussion about classification with the following:
\begin{proposition}\label{prop: r=3 B nondegenerate}
If $r=3$, $n=8$ and $\langle \B\rangle=\PP^8$, 
then  one of the following cases holds:
\begin{enumerate}[(I)]
\setcounter{enumi}{18} 
\item $a=0$,  $\lambda=12$,  $g=6$,  $\B$ is a scroll $\PP_{Y}(\E)$ over a birationally ruled surface $Y$ with $K_Y^2=5$, $c_2(\E)=8$ and $c_1^2(\E)=20$; 
\item $a=0$,  $\lambda=13$,  $g=8$,  $\B$ is  obtained as the blow-up of a Fano variety $X$ at a point $p\in X$,  $|H_{\B}|=|H_{X}-p|$;  
\item\label{case: cubic hypersurface} $a=1$,  $\lambda=11$,  $g=5$,  $\B$ is the blow-up of $Q^3$ at $5$ points $p_1,\ldots,p_5$, $|H_{\B}|=|2H_{Q^3}-p_1-\cdots-p_5|$;
\item $a=1$,  $\lambda=11$,  $g=5$,  $\B$ is a scroll over $\PP_{\PP^1}(\O\oplus\O(-1))$; 
\item $a=1$,  $\lambda=12$,  $g=7$,  $\B$ is a linear section of $S^{10}\subset\PP^{15}$;
\item\label{case: scroll over Q2 or quadric fibration} $a=2$,  $\lambda=10$,  $g=4$,  $\B$ is a scroll over $Q^2$;
\item $a=3$,  $\lambda=9$,   $g=3$,  $\B$ is a scroll over $\PP^2$ or a quadric fibration over $\PP^1$; 
\item $a=4$,  $\lambda=8$,   $g=2$,  $\B$ is a hyperplane section of $\PP^1\times Q^3$; 
\item $a=6$,  $\lambda=6$,   $g=0$,  $\B$ is a rational normal scroll.
\end{enumerate}
\end{proposition}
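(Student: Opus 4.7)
The plan is to proceed by case analysis on $a\in\{0,1,\ldots,6\}$, using Lemma~\ref{lemma: r=3 B nondegenerate} to fix the invariants $(\lambda,g)$, and then invoking the published classifications of smooth threefolds with small degree or small degree relative to codimension from \cite{ionescu-smallinvariants}, \cite{ionescu-smallinvariantsII}, \cite{ionescu-smallinvariantsIII}, \cite{fania-livorni-nine}, \cite{fania-livorni-ten}, \cite{besana-biancofiore-deg11}, and \cite{ionescu-degsmallrespectcodim}. The auxiliary Lemmas~\ref{lemma: double point formula}, \ref{lemma: quadric fibration}, \ref{lemma: scroll over surface}, and \ref{lemma: scroll over curve} will be used both to exclude incompatible fibration structures and to determine the inverse data $(d,\Delta)$ in each surviving case.

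I would start with the easy extremes. For $a=6$, Lemma~\ref{lemma: r=3 B nondegenerate} forces $\lambda=6$, $g=0$; this is a threefold of minimal degree, so by the Bertini--Del Pezzo theorem $\B$ is a rational normal scroll, giving case~(27), with $(d,\Delta)=(2,14)$ extracted from Lemma~\ref{lemma: scroll over curve}. For the potentially occurring value $a=5$, one would have $\lambda=7$, $g=1$; inspecting \cite{ionescu-smallinvariants} (or directly checking that combining the double point formula of Lemma~\ref{lemma: double point formula} with Proposition~\ref{prop: segre and chern classes} has no admissible integer solution) shows this case does not occur, which is why $a=5$ is absent from the statement.

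For $a\in\{2,3,4\}$ the pair $(\lambda,g)$ is uniquely determined and I would cross-reference it against Ionescu's lists, pruning via Lemmas~\ref{lemma: quadric fibration} and \ref{lemma: scroll over surface}: the $a=4$ case matches the quadric fibration data and gives the hyperplane section of $\PP^1\times Q^3$; the $a=3$ case splits into the scroll over $\PP^2$ (Lemma~\ref{lemma: scroll over surface}) and the quadric fibration over $\PP^1$ (Lemma~\ref{lemma: quadric fibration}), giving case~(25); and the $a=2$ case yields the scroll over $Q^2$. For $a=1$, where $(\lambda,g)\in\{(11,5),(12,7)\}$, I would combine \cite[Proposition~6.4]{note} with \cite{besana-biancofiore-deg11} and \cite{fania-livorni-nine} to obtain cases~(21)--(23); the two genuinely distinct threefolds with $(\lambda,g)=(11,5)$ are separated by the behaviour of their first adjoint bundle, one being the blow-up of $Q^3$ at five points and the other a scroll over a Hirzebruch surface. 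Finally, the $a=0$ case concerns special Cremona transformations ($\Delta=1$, $d=5$), so the two possibilities $(\lambda,g)\in\{(12,6),(13,8)\}$ are covered by the Crauder--Katz classification \cite{crauder-katz-1989,crauder-katz-1991}, producing cases~(19) and~(20).

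The main obstacle will be case~(19) with $a=0$ and $(\lambda,g)=(12,6)$: identifying the base surface $Y$ of the scroll $\PP_Y(\E)$ as birationally ruled with $K_Y^2=5$ and computing the Chern data $c_1^2(\E)=20$, $c_2(\E)=8$ requires combining the value of $K_\B^3$ predicted by Lemma~\ref{lemma: double point formula} with the scroll formula for $c_2(Y)$ furnished by Lemma~\ref{lemma: scroll over surface}, followed by a delicate numerical matching against Ionescu's tables. A secondary technical point is the clean exclusion of $a=5$ and the separation of cases~(21) and~(22), where the polarised invariants coincide but the adjoint linear systems differ.
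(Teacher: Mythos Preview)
Your overall strategy matches the paper's: case-split on $a$ via Lemma~\ref{lemma: r=3 B nondegenerate}, then invoke the low-degree threefold classifications and prune with Lemmas~\ref{lemma: quadric fibration}--\ref{lemma: scroll over curve}. The treatment of $a\in\{2,3,4,6\}$ is essentially identical to the paper's (for $a=2$ the paper explicitly notes that the a priori possible quadric-fibration structure is killed by Lemma~\ref{lemma: quadric fibration}, which you should state rather than leave implicit). For $a=1$ the paper simply cites \cite[Proposition~6.6]{note} (not~6.4), which already contains the full trichotomy (\Rmnum{21})--(\Rmnum{23}); you do not need to bring in \cite{besana-biancofiore-deg11} or \cite{fania-livorni-nine} here.

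The two places where your route diverges from the paper are $a=5$ and $a=0$. For $a=5$ the paper does not argue numerically from the double point formula in the abstract; it first uses \cite{ionescu-smallinvariants} to identify the unique candidate as $\Bl_p\PP^3\subset\PP^8$ (a del Pezzo threefold of degree~$7$), and then appeals to Example~\ref{example: a=5}, where the Segre classes of this explicit variety are computed and plugged into \eqref{eq: sollevabile} to show that the inverse cannot be liftable (equivalently, $\Sec(\B)$ has dimension $6$, contradicting Proposition~\ref{prop: B is QEL}). Your parenthetical ``no admissible integer solution'' is in fact the same mechanism, but you should make the identification $\B=\Bl_p\PP^3$ explicit first.

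For $a=0$ the paper does \emph{not} defer to \cite{crauder-katz-1989,crauder-katz-1991}; it gives self-contained arguments. For $\lambda=12$ it invokes \cite[Theorem~1]{ionescu-degsmallrespectcodim} (since $12\le 2\cdot\mathrm{codim}+2$), runs through the structural cases (a)--(h) there, and eliminates the non-scroll possibilities one by one: the quadric fibration via Lemma~\ref{lemma: quadric fibration}, scrolls over curves via Lemma~\ref{lemma: scroll over curve}, and the three ``reduction'' subcases (f1)--(f3) by explicit degree and intersection computations (including the relation $18K_X^2\cdot H_X-155s/2-210=0$ for the $\PP^2$-bundle reduction, combined with the bound $s\le 12$ from \cite{besana-biancofiore-numerical}). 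Only then does it read off $K_Y^2=5$, $c_2(\E)=8$, $c_1^2(\E)=20$ from Lemma~\ref{lemma: scroll over surface} and \cite[Theorem~11.1.2]{beltrametti-sommese}. For $\lambda=13$ the paper follows \cite[p.~16]{mella-russo-baselocusleq3}: it shows $K_S$ is a $(-1)$-curve on the surface section, uses Lemmas~\ref{lemma: quadric fibration}--\ref{lemma: scroll over curve} to prove the adjunction map $\phi_{|K_\B+2H_\B|}$ is generically finite but not finite, and then contracts the resulting $(\PP^2,\O(-1))$ to obtain the Fano threefold $X\subset\PP^9$. Your plan to cite Crauder--Katz here is a gap: those references do not settle the type~$(2,5)$ case with $r=3$, and in any event the scroll invariants in case~(\Rmnum{19}) require the argument above rather than a table lookup.
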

\begin{proof}
For $a=6$ the statement follows from \cite{ionescu-smallinvariants}.
The case with $a=5$ is excluded by  \cite{ionescu-smallinvariants} and Example
\ref{example: a=5}.
For $a=4$ the statement follows from \cite{ionescu-smallinvariantsIII}.  
 For $a\in\{2,3\}$,
by \cite{fania-livorni-nine}, 
   \cite{fania-livorni-ten} and
   \cite{ionescu-smallinvariantsII} 
it follows that the abstract structure of
$\B$  is as asserted, or $a=2$ and $\B$ is a quadric 
fibration over $\PP^1$; the last case is 
excluded by  
Lemma \ref{lemma: quadric fibration}.
 For $a=1$  the statement is just \cite[Proposition~6.6]{note}. 
Now we treat the cases with $a=0$.
\begin{case}[$a=0, \lambda=12$]
 Since $\deg(\B)\leq 2\mathrm{codim}_{\PP^8}(\B)+2$, it follows that 
$(\B,H_{\B})$  must be as in one of the cases
 (a),\ldots,(h) of \cite[Theorem~1]{ionescu-degsmallrespectcodim}.
Cases (a), (d), (e), (g), (h) are of course impossible 
and case (c) is excluded by Lemma \ref{lemma: quadric fibration}. 
% and also because  $k=\#(\L_{x,\B})\leq1$. 
If $\B$ is as in case (b), 
by Lemma \ref{lemma: scroll over curve}
we obtain that
$\B$ is a scroll over a birationally ruled surface. 
Now suppose that   $(\B,H_{\B})$ is as in case (f). 
Thus there is a reduction $(X,H_X)$ as in one of the cases:
\begin{enumerate}[(f1)]
 \item\label{case1} $X=\PP^3$, $H_X\in|\O(3)|$;
 \item\label{case2} $X=Q^3$, $H_X\in|\O(2)|$;
 \item\label{case3} $X$ is a $\PP^2$-bundle over a smooth curve such that 
                    $\O_X(H_X)$ induces $\O(2)$ on each fiber.
\end{enumerate}
By definition of reduction we have $X\subset\PP^{N}$,
where $N=8+s$, 
$\deg(X)=\lambda+s=12+s$ and
 $s$ is the number of points blown up on $X$ to get $\B$. 
Case (f\ref{case1}) and (f\ref{case2}) 
are impossible because
they force $\lambda$ to be
respectively $16$ and $11$.
In case (f\ref{case3}), 
we have a projection $\beta:(X,H_{X})\rightarrow (Y,H_Y)$ 
 over a curve  $Y$ such that
 $\beta^{\ast}(H_Y)=2K_{X}+3H_{X}$. % (see \cite[Lemma~8.1]{besana-biancofiore-numerical}) 
Hence we get 
\begin{displaymath}
 K_X H_X^2= (2K_X+3H_X)^2\cdot H_X/12 -K_X^2\cdot H_X/3 - 3H_X^3/4 = -K_X^2\cdot H_X/3 - 3H_X^3/4 ,
\end{displaymath}
from which we deduce that
\begin{eqnarray*}
 0&=&(2K_X+3H_X)^3 
   = 8K_X^3+36K_X^2\cdot H_X+54K_X\cdot H_X^2+27H_X^3 \\
  &=& 8K_X^3+18K_X^2\cdot H_X-27 H_X^3/2 \\
 &=& 8( K_{\B}^3 - 8s )+18K_X^2\cdot H_X-27 (\deg(\B)+s)/2 \\ 
 &=& 18 K_X^2\cdot H_X-155s/2-210.
\end{eqnarray*} 
Since $s\leq 12$ (see \cite[Lemma~8.1]{besana-biancofiore-numerical}),  
we conclude that case (f)
does not occur. 
Thus,  $\B=\PP_{Y}(\E)$ is a scroll over a surface $Y$; 
moreover,
by Lemma \ref{lemma: scroll over surface} and 
\cite[Theorem~11.1.2]{beltrametti-sommese}, 
% \cite[remark~5.1]{besana-biancofiore-numerical}, 
we obtain $K_Y^2=5$, $c_2(\E)=K_Y^2-K_S^2=8$ and $c_1^2(\E)=\lambda+c_2(\E)=20$.
\end{case}
\begin{case}[$a=0, \lambda=13$]
The proof is located in  \cite[page~16]{mella-russo-baselocusleq3},
but we sketch it for the reader's convenience.
By Lemma \ref{lemma: r=3 B nondegenerate} we know that
$\chi(\O_S)=2$ and  $K_S$ is an exceptional curve of the first kind.
Thus, if we blow-down the divisor $K_S$,  we obtain 
a $K3$-surface. % (see \cite[pages~387,414,422]{hartshorne-ag}, \cite[page~582]{griffiths-harris})
By using adjunction theory 
(see for instance \cite{beltrametti-sommese} or  Ionescu's papers cited in the references)   
and  by Lemmas \ref{lemma: quadric fibration}, 
\ref{lemma: scroll over surface} and \ref{lemma: scroll over curve} it follows that 
the adjunction map $\phi_{|K_{\B}+2H_{\B}|}$ is a generically finite morphism;
moreover, since $(K_{\B}+2H_{\B})\cdot K_S=0$, we see that
$\phi_{|K_{\B}+2H_{\B}|}$ is not a finite morphism.
%  $K_{\B}+2H_{\B}$ is not ample. % (we have $h^0(S,K_S)>0$, $K_S\cdot H_S=1$, hence $K_S$ is effective and integral. Now see \cite[page~14]{debarre}.)
So, we deduce that 
there is a $(\PP^2,\O_{\PP^2}(-1))$ inside $\B$  
% see \cite[page~342, Proof of Lemma~1.2]{ionescu-degsmallrespectcodim} and \cite[Theorems~2.7, 2.8, 2.9]{fania-livorni-ten}
and, after the blow-down of this divisor, we get a smooth Fano $3$-fold 
$X\subset\PP^9$  of sectional genus $8$ and degree $14$. 
\end{case}
\end{proof}
\section{Examples}\label{sec: examples}
 The calculations in the following examples can be verified 
 with the aid of  the computer algebra system \cite{macaulay2}.
\begin{example}[$r=1,2,3; n=3,4,5; a=1; d=1$]\label{example: 1}
See also \cite[\S 2]{note}.
 If $Q\subset\PP^{n-1}\subset\PP^n$ is a smooth quadric, then
the linear system $|\I_{Q,\PP^n}(2)|$ defines  a birational transformation
$\psi:\PP^n\dashrightarrow\sS\subset\PP^{n+1}$ of type $(2,1)$ whose image
 is a smooth quadric.
\end{example}
\begin{example}[$r=1; n=4; a=0; d=3$]\label{example: 2}
See also \cite{crauder-katz-1989}. If $X\subset\PP^4$ 
is a nondegenerate curve of genus $1$ and degree $5$, then $X$ is 
the scheme-theoretic intersection of the quadrics (of rank $3$) containing $X$ and 
$|\I_{X,\PP^4}(2)|$ defines a Cremona transformation 
$\PP^4\dashrightarrow\PP^4$ of type $(2,3)$.
\end{example}
\begin{example}[$r=1,2,3; n=4,5,7; a=1,0,1; d=2$]\label{example: 3}
See also \cite{ein-shepherdbarron} and \cite[Example~4.1]{note}.
If $X\subset\PP^n$ is a Severi variety, then $|\I_{X,\PP^n}(2)|$ defines
a birational transformation
 $\psi:\PP^n\dashrightarrow\PP^n$ of type $(2,2)$ whose  base locus is $X$.
The restriction of  $\psi$
to a general hyperplane is a birational transformation 
 $\PP^{n-1}\dashrightarrow\sS\subset\PP^n$ of type $(2,2)$ and
 $\sS$ is a smooth quadric.
\end{example}
\begin{example}[$r=1; n=4; a=2; d=1$ - not satisfying \ref{assumption: ipotesi}]\label{example: B2=singSred}
We have a special birational transformation
$\psi:\PP^4\dashrightarrow\sS\subset\PP^6$ of type $(2,1)$
with base locus $X$, image  $\sS$
and base locus of the inverse $Y$, as follows:
\begin{eqnarray*}
X &=& V(x_0x_1-x_2^2-x_3^2,-x_0^2-x_1^2+x_2x_3,x_4), \\
\sS &= & V(y_2y_3-y_4^2-y_5^2-y_0y_6,y_2^2+y_3^2-y_4y_5+y_1y_6), \\
 P_{\sS}(t) &=& (4t^4+24t^3+56t^2+60t+24)/4!, \\ 
\sing(\sS) &=& V(y_6,y_5^2,y_4y_5,y_3y_5,y_2y_5,y_4^2,y_3y_4,y_2y_4,2y_1y_4+y_0y_5, \\
           && y_0y_4+2y_1y_5,y_3^2,y_2y_3,y_2^2,y_1y_2+2y_0y_3,2y_0y_2+y_1y_3), \\
P_{\sing(\sS)}(t) &=& t + 5, \\
(\sing(\sS))_{\mathrm{red}} &=& V(y_6,y_5,y_4,y_3,y_2), \\
Y=(Y)_{\mathrm{red}}&=&(\sing(\sS))_{\mathrm{red}}= V(y_6,y_5,y_4,y_3,y_2).
\end{eqnarray*}
See also \cite[Example~4.6]{note} for another example 
in which \ref{assumption: ipotesi} is not satisfied.
\end{example}
\begin{example}[$r=1,2,3; n=4,5,6; a=3; d=1$]\label{example: 5}
See also \cite{russo-simis} and \cite{semple}.
If $X=\PP^1\times\PP^2\subset\PP^5\subset\PP^{6}$, 
then $|\I_{X,\PP^6}(2)|$ defines 
a birational transformation 
$\psi:\PP^{6}\dashrightarrow \sS\subset\PP^{9}$ of type $(2,1)$
whose base locus is $X$ and whose image is $\sS=\GG(1,4)$.
Restricting $\psi$ to a general $\PP^5\subset\PP^{6}$ 
(resp. $\PP^4\subset\PP^{6}$)
we obtain a birational transformation
 $\PP^5\dashrightarrow\sS\subset\PP^{8}$
(resp. $\PP^4\dashrightarrow\sS\subset\PP^{7}$) 
whose  image is a smooth linear section of $\GG(1,4)\subset\PP^{9}$.
\end{example}
\begin{example}[$r=2; n=6; a=0; d=4$]\label{example: 6}
See also \cite{crauder-katz-1989} and \cite{hulek-katz-schreyer}. 
Let $Z=\{p_1,\ldots,p_8\}\subset\PP^2$ be such that 
no $4$ of the $p_i$ are collinear and no $7$ of the $p_i$ lie on a conic and
consider the blow-up $X=\Bl_Z(\PP^2)$ 
embedded in $\PP^6$ by $|4H_{\PP^2}-p_1-\cdots-p_8|$. 
Then the homogeneous ideal of $X$ is
generated by quadrics and $|\I_{X,\PP^6}(2)|$ 
defines a Cremona transformation $\PP^6\dashrightarrow\PP^6$ of type $(2,4)$.
The same happens when
 $X\subset\PP^6$ is a septic elliptic scroll with $e=-1$.
\end{example}
\begin{example}[$r=2; n=6; a=1; d=3$]\label{example: 7}
See also \cite[Examples~4.2 and 4.3]{note}.
If $X\subset\PP^6$ is a general hyperplane section of an
Edge variety  of dimension $3$ and degree $7$ in $\PP^7$,
then $|\I_{X,\PP^6}(2)|$ defines a birational transformation
$\psi:\PP^6\dashrightarrow\sS\subset\PP^7$ of type $(2,3)$ whose
base locus is $X$ and whose image is a rank $6$ quadric.
\end{example}
\begin{example}[$r=2; n=6;a=2;d=2$]\label{example: 8}
If $X\subset\PP^6$ 
is the blow-up of $\PP^2$ at $3$ general points $p_1,p_2,p_3$ with
 $|H_{X}|=|3H_{\PP^2}-p_1-p_2-p_3|$, then 
$\Sec(X)$ is a cubic hypersurface. 
By Fact \ref{fact: K2 property} and \ref{fact: test K2} we deduce  
that $|\I_{X,\PP^6}(2)|$ defines a birational transformation 
$\psi:\PP^6\dashrightarrow\sS\subset\PP^8$ and its type is $(2,2)$.
The image $\sS$ is a complete intersection of two quadrics, $\dim(\sing(\sS))=1$ and 
the base locus of the inverse is $\PP^2\times\PP^2\subset\PP^8$.
Alternatively, we can obtain the transformation 
$\psi:\PP^6\dashrightarrow\sS\subset\PP^8$
by restriction to a general $\PP^6\subset\PP^8$
of the special Cremona transformation $\PP^8\dashrightarrow\PP^8$ of type $(2,2)$.
\end{example}
\begin{example}[$r=2; n=6; a=3; d=2$]\label{example: 9}
See also \cite{russo-simis} and \cite{semple}.
If $X=\PP_{\PP^1}(\O(1)\oplus\O(4))$ or $X=\PP_{\PP^1}(\O(2)\oplus\O(3))$, then
$|\I_{X,\PP^6}(2)|$ defines a birational
 transformations 
 $\psi:\PP^6\dashrightarrow\sS\subset\PP^9$
of type $(2,2)$ whose base locus is $X$ and whose image is
$\sS=\GG(1,4)$.
\end{example}
\begin{example}[$r=2,3;n=6,7;a=5; d=1$]\label{example: 10}
See also \cite[\Rmnum{3} Theorem~3.8]{zak-tangent}.
If $X=\GG(1,4)\subset\PP^9\subset\PP^{10}$, then 
$|\I_{X,\PP^{10}}(2)|$ defines a birational transformation 
$\psi:\PP^{10}\dashrightarrow \sS\subset\PP^{15}$ of type $(2,1)$
whose base locus is $X$ and whose image is the spinorial variety $\sS=S^{10}\subset\PP^{15}$.
Restricting $\psi$
  to a general 
 $\PP^7\subset\PP^{10}$ 
 (resp. $\PP^6\subset\PP^{10}$) 
we obtain a special birational transformation
 $\PP^7\dashrightarrow\sS\subset\PP^{12}$
(resp. $\PP^6\dashrightarrow\sS\subset\PP^{11}$) 
whose dimension of the base locus is
$r=3$ (resp. $r=2$) 
and whose image is a linear section of $S^{10}\subset\PP^{15}$.
In the first case $\sS=\overline{\psi(\PP^7)}$ is smooth while in the second case 
the singular locus of $\sS=\overline{\psi(\PP^6)}$ consists of 
$5$ lines, image of the $5$ Segre $3$-folds 
containing del Pezzo surface of degree $5$ 
and spanned by its pencils of conics.
\end{example}
\begin{example}[$r=2,3; n=6,7; a=6; d=1$]\label{example: 11}
See also \cite{russo-simis}, \cite{semple} and \cite[\Rmnum{3} Theorem~3.8]{zak-tangent}.
We have a birational transformation
$\psi:\PP^{8}\dashrightarrow\GG(1,5)\subset\PP^{14}$ of type $(2,1)$
whose base locus 
is $\PP^1\times\PP^3\subset\PP^7\subset\PP^{8}$ and whose image is $\GG(1,5)$.
Restricting $\psi$
  to a general  $\PP^7\subset\PP^{8}$ 
 we obtain a birational transformation 
 $\PP^7\dashrightarrow\sS\subset\PP^{13}$
whose base locus $X$ is
a rational normal scroll 
and whose image $\sS$ is  a smooth linear section of
 $\GG(1,5)\subset\PP^{14}$. 
Restricting $\psi$
  to a general  $\PP^6\subset\PP^{8}$ 
 we obtain a birational transformation 
 $\psi=\psi|_{\PP^6}:\PP^6\dashrightarrow\sS\subset\PP^{12}$
whose base locus $X$ is
a rational normal scroll 
(hence either $X=\PP_{\PP^1}(\O(1)\oplus\O(3))$ 
or  $X=\PP_{\PP^1}(\O(2)\oplus\O(2))$)
and whose image $\sS$ is  a singular linear section of
 $\GG(1,5)\subset\PP^{14}$. In this case,
we denote by $Y\subset\sS$ the base locus of the inverse of $\psi$ and by 
$F=(F_0,\ldots,F_5):\PP^5\dashrightarrow\PP^5$ 
the restriction of $\psi$ to $\PP^5=\Sec(X)$. 
We have
\begin{eqnarray*}
Y&=&\overline{\psi(\PP^5)}=\overline{F(\PP^5)}=\GG(1,3)\subset\PP^5\subset\PP^{12} , \\
J_4&:=&\left\{x=[x_0,...,x_5]\in\PP^5\setminus X: \mathrm{rank}\left(\left({\partial F_i}/{\partial x_j}(x)\right)_{i,j}\right)\leq 4   \right\}_{\mathrm{red}}\\
    &=& \left\{x=[x_0,...,x_5]\in\PP^5\setminus X: \dim\left(\overline{F^{-1}\left(F(x)\right)}\right)\geq2   \right\}_{\mathrm{red}}\mbox{ and }\dim\left(J_4\right) = 3,\\
\overline{\psi\left(J_4\right)} &=& \left(\sing\left(\sS\right)\right)_{\mathrm{red}} =\PP_{\PP^1}(\O(2)) \subset Y. \\
\end{eqnarray*}
\end{example}
\begin{example}[$r=3;n=8;a=0;d=5$]\label{example: 12}
 See also \cite{hulek-katz-schreyer}. If $\mathcal{X}\subset\PP^9$ is a 
general $3$-dimensional 
 linear section of $\GG(1,5)\subset \PP^{14}$,
 $p\in \mathcal{X}$ is a general point and 
 $X\subset\PP^8$ is the image of $\mathcal{X}$ under the projection from $p$,
then the homogeneous ideal of $X$ is generated by quadrics 
and $|\I_{X,\PP^8}(2)|$ defines a Cremona transformation $\PP^8\dashrightarrow\PP^8$
of type $(2,5)$.
\end{example}
\begin{example}[$r=3;n=8;a=1;d=3$]\label{example: 13} 
See also \cite[Example~4.5]{note}.
If $X\subset\PP^8$ 
is the blow-up 
of the smooth quadric $Q^3\subset\PP^4$ 
at $5$ general points $p_1,\ldots,p_5$ with
 $|H_{X}|=|2H_{Q^3}-p_1-\cdots-p_5|$, then 
$|\I_{X,\PP^8}(2)|$ defines a birational transformation 
$\psi:\PP^8\dashrightarrow\sS\subset\PP^9$ of type $(2,3)$ whose 
base locus is $X$ and whose image 
is a cubic hypersurface with singular locus of dimension $3$.
\end{example}
\begin{example}[$r=3;n=8;a=1;d=4$ - incomplete]\label{example: 14}
By \cite{alzati-fania-ruled} (see also \cite{besana-fania-flamini-f1}) there exists
a smooth irreducible nondegenerate linearly normal $3$-dimensional variety 
 $X\subset\PP^8$  with
 $h^1(X,\O_X)=0$, % vedi pag 15 di \cite{besana-fania-flamini-f1}  
 degree $\lambda=11$, sectional genus $g=5$, having the structure of a 
 scroll $\PP_{\FF^1}(\E)$  with $c_1(\E)=3C_0+5f$ and $c_2(\E)=10$ 
and hence having degrees of the Segre classes 
 $s_1(X)=-85$, $s_2(X)=386$, $s_3(X)=-1330$.
Now, by Fact \ref{fact: test K2}, $X\subset\PP^8$ is 
arithmetically Cohen-Macaulay
and by Riemann-Roch, denoting with $C$ a general curve section of $X$,
 we obtain
\begin{equation}\label{eq: riemann-rock}
h^0(\PP^8,\I_X(2))=h^0(\PP^6,\I_C(2))
= h^0(\PP^6,\O_{\PP^6}(2))-h^0(C,\O_C(2))
% see [Hartshorne, example 1.3.4, page 296]
=28-(2\lambda+1-g),
\end{equation}
hence $h^0(\PP^8,\I_X(2))=10$.
If the homogeneous ideal of $X$ is generated by quadratic forms or at least if
$X=V(H^0(\I_X(2)))$,
the linear system $|\I_X(2)|$ defines a rational map 
$\psi:\PP^8\dashrightarrow\sS=\overline{\psi(\PP^8)}\subset\PP^{9}$ 
whose base locus is
 $X$ and whose image $\sS$ is nondegenerate.
Now, by (\ref{eq: grado mappa razionale}) we deduce   $\deg(\psi)\deg(\sS)=2$, 
from which 
$\deg(\psi)=1$ and $\deg(\sS)=2$.
\end{example}   
\begin{example}[$r=3;n=8;a=1;d=4$]\label{example: 15} 
See also \cite[\S 4]{ein-shepherdbarron} and \cite[Example~4.4]{note}.
If $X\subset\PP^8$ is
a general linear $3$-dimensional section 
of the spinorial variety $S^{10}\subset\PP^{15}$, then
$|\I_{X,\PP^8}(2)|$ defines a
 birational transformation 
 $\psi:\PP^8\dashrightarrow\sS\subset\PP^9$
of type $(2,4)$ 
whose base locus is $X$ and whose image is a smooth quadric.
\end{example}
\begin{example}[$r=3;n=8;a=2;d=3$]\label{example: 16}
By \cite{fania-livorni-ten} (see also \cite{besana-fania-threefolds}) there exists
a smooth irreducible nondegenerate linearly normal $3$-dimensional variety 
 $X\subset\PP^8$  with $h^1(X,\O_X)=0$,  
 degree $\lambda=10$, sectional genus $g=4$, 
having the structure of a  scroll 
$\PP_{Q^2}(\E)$ with $c_1(\E)=\O_Q(3,3)$ and $c_2(\E)=8$ 
and hence having degrees of the Segre classes 
$s_1(X)=-76$, $s_2(X)=340$, $s_3(X)=-1156$.
By Fact \ref{fact: test K2}, $X\subset\PP^8$ is
arithmetically Cohen-Macaulay and its  homogeneous ideal is generated by quadratic forms. 
So by (\ref{eq: riemann-rock}) we have $h^0(\PP^8,\I_X(2))=11$ and 
the linear system $|\I_X(2)|$ defines a rational map 
$\psi:\PP^8\dashrightarrow\sS\subset\PP^{10}$ 
whose base locus is
 $X$ and whose image $\sS$ is nondegenerate.
By (\ref{eq: grado mappa razionale}) it follows that $\deg(\psi)\deg(\sS)=4$ 
and hence $\deg(\psi)=1$ and $\deg(\sS)=4$.
\end{example}
\begin{example}[$r=3;n=8;a=3;d=2,3$]\label{example: 17}
By \cite{fania-livorni-nine} (see also \cite{besana-fania-threefolds}) there exists
a smooth irreducible nondegenerate linearly normal $3$-dimensional variety 
 $X\subset\PP^8$  with $h^1(X,\O_X)=0$,  
 degree $\lambda=9$, sectional genus $g=3$, 
having the structure of a  scroll $\PP_{\PP^2}(\E)$ with $c_1(\E)=4$ and $c_2(\E)=7$ 
(resp. of a quadric fibration over $\PP^1$) 
and hence having degrees of the Segre classes 
$s_1(X)=-67$, $s_2(X)=294$, $s_3(X)=-984$ (resp. 
$s_1(X)=-67$, $s_2(X)=295$, $s_3(X)=-997$).
By Fact \ref{fact: test K2}, $X\subset\PP^8$ is
arithmetically Cohen-Macaulay and its homogeneous ideal is generated by quadratic forms. 
So by (\ref{eq: riemann-rock}) we have $h^0(\PP^8,\I_X(2))=12$ and 
the linear system $|\I_X(2)|$ defines a rational map 
$\psi:\PP^8\dashrightarrow\sS\subset\PP^{11}$ 
whose base locus is
 $X$  and whose image $\sS$ is nondegenerate.
By (\ref{eq: grado mappa razionale}) it follows that 
$\deg(\psi)\deg(\sS)=8$ (resp. $\deg(\psi)\deg(\sS)=5$) and 
in particular 
$\deg(\psi)\neq 0$ i.e. 
$\psi:\PP^8\dashrightarrow\sS$ is 
generically quasi-finite.
Again by Fact \ref{fact: test K2} and Fact \ref{fact: K2 property} it follows
that $\psi$ is birational and hence $\deg(\sS)=8$ (resp. $\deg(\sS)=5$).
\end{example}
\begin{example}[$r=3;n=8;a=4;d=2$]\label{example: 18}
Consider the composition
$$
f:\PP^1\times\PP^3\longrightarrow\PP^1\times Q^3\subset \PP^1\times\PP^4\longrightarrow\PP^9,
$$
where the first map is the identity of $\PP^1$ 
multiplied by
$[z_0,z_1,z_2,z_3]\mapsto [z_0^2,z_0z_1,z_0z_2,z_0z_3,z_1^2+z_2^2+z_3^2]$,
and the last map is
$([t_0,t_1],[y_0,\ldots,y_4])\mapsto [t_0y_0,\ldots,t_0y_4,t_1y_0,\ldots,t_1y_4]
=[x_0,\ldots,x_9]$.
In the equations defining 
$\overline{f(\PP^1\times\PP^3)}\subset\PP^{9}$, 
by replacing 
 $x_9$ with $x_0$, we obtain the quadrics:
\begin{equation}\label{equazioni-di-B-a=4}
 \begin{array}{c}
-x_0x_3 + x_4x_8, \ 
-x_0x_2 + x_4x_7, \ 
 x_3x_7 - x_2x_8, \ 
-x_0x_5 + x_6^2 + x_7^2 + x_8^2, \ 
-x_0x_1 + x_4x_6, \\ 
x_3x_6 - x_1x_8, \ 
x_2x_6 - x_1x_7, \
-x_0^2 + x_1x_6 + x_2x_7 + x_3x_8, \ 
-x_0^2 + x_4x_5, \ 
x_3x_5 - x_0x_8, \\ 
x_2x_5 - x_0x_7, \ 
x_1x_5 - x_0x_6, \ 
x_1^2 + x_2^2 + x_3^2 - x_0x_4.
\end{array}
\end{equation}
Denoting with $I$ the ideal generated by
quadrics (\ref{equazioni-di-B-a=4}) and $X=V(I)$, 
we have that $I$ is saturated (in particular $I_2=H^0(\I_{X,\PP^8}(2))$) 
and $X$ is smooth.
The linear system $|\I_{X,\PP^8}(2)|$ defines 
a birational transformation 
 $\psi:\PP^8\dashrightarrow\sS\subset \PP^{12}$ 
whose base locus is $X$ and whose image is the variety $\sS$ 
with homogeneous ideal generated by:
\begin{equation}
\begin{array}{c}
 y_6y_9-y_5y_{10}+y_2y_{11}, \ 
y_6y_8-y_4y_{10}+y_1y_{11}, \ 
y_5y_8-y_4y_9+y_0y_{11}, \ 
y_2y_8-y_1y_9+y_0y_{10}, \\ 
y_2y_4-y_1y_5+y_0y_6, \ 
y_2^2+y_5^2+y_6^2+y_7^2-y_7y_8+y_0y_9+y_1y_{10}+y_4y_{11}-y_3y_{12}.
\end{array}
\end{equation}
We have $\deg(\sS)=10$ and  $\dim(\sing(\sS))=3$. 
The inverse of $\psi:\PP^8\dashrightarrow\sS$ 
is defined by:
\begin{equation}
 \begin{array}{c}
-y_7y_8+y_0y_9+y_1y_{10}+y_4y_{11}, \ 
y_0y_5+y_1y_6-y_4y_7-y_{11}y_{12}, \ 
y_0y_2-y_4y_6-y_1y_7-y_{10}y_{12}, \\
-y_1y_2-y_4y_5-y_0y_7-y_9y_{12}, \ 
-y_0^2-y_1^2-y_4^2-y_8y_{12}, \  
-y_3y_8-y_9^2-y_{10}^2-y_{11}^2, \\
-y_3y_4-y_5y_9-y_6y_{10}-y_7y_{11}, \ 
-y_1y_3-y_2y_9-y_7y_{10}+y_6y_{11}, \ 
-y_0y_3-y_7y_9+y_2y_{10}+y_5y_{11}.
\end{array}
\end{equation}
Note that  $\sS\subset\PP^{12}$ 
is the intersection of a quadric hypersurface in $\PP^{12}$ 
with the cone over $\GG(1,4)\subset\PP^9\subset\PP^{12}$.
\end{example}
\begin{example}[$r=3;n=8;a=5$ - with non liftable inverse]\label{example: a=5}
If $X\subset\PP^8$ is the blow-up  of $\PP^3$  at a point $p$ with
 $|H_{X}|=|2H_{\PP^3}-p|$, then (modulo a change of coordinates)
the homogeneous ideal of $X$ is generated by the quadrics: 
\begin{equation}
 \begin{array}{c}
x_6x_7-x_5x_8,\ 
x_3x_7-x_2x_8,\ 
x_5x_6-x_4x_8,\ 
x_2x_6-x_1x_8,\ 
x_5^2-x_4x_7,\ 
x_3x_5-x_1x_8,\ 
x_2x_5-x_1x_7,\\
x_3x_4-x_1x_6,\ 
x_2x_4-x_1x_5,\ 
x_2x_3-x_0x_8,\ 
x_1x_3-x_0x_6,\ 
x_2^2-x_0x_7,\ 
x_1x_2-x_0x_5,\ 
x_1^2-x_0x_4 .
\end{array}
\end{equation}
The linear system $|\I_{X,\PP^8}(2)|$ defines a birational transformation 
$\psi:\PP^{8}\dashrightarrow\PP^{13}$ whose base locus is $X$ and whose 
image is the variety $\sS$ with homogeneous ideal generated by:
\begin{equation}
 \begin{array}{c}
y_8y_{10}-y_7y_{12}-y_3y_{13}+y_5y_{13},\ 
 y_8y_9+y_6y_{10}-y_7y_{11}-y_3y_{12}+y_1y_{13},\ 
 y_6y_9-y_5y_{11}+y_1y_{12},\\ 
 y_6y_7-y_5y_8-y_4y_{10}+y_2y_{12}-y_0y_{13},\  
 y_3y_6-y_5y_6+y_1y_8+y_4y_9-y_2y_{11}+y_0y_{12},\\ 
 y_3y_4-y_2y_6+y_0y_8,\ 
y_3^2y_5-y_3y_5^2+y_1y_3y_7-y_2y_3y_9+y_2y_5y_9-y_0y_7y_9-y_1y_2y_{10}+y_0y_5y_{10}. 
\end{array}
\end{equation}
We have $\deg(\sS)=19$, $\dim(\sing(\sS))=4$ and
the degrees of Segre classes of $X$ are: $s_1=-49$, $s_2=201$, $s_3=-627$. So,
by (\ref{eq: sollevabile}), we deduce that
 the inverse of 
 $\psi:\PP^8\dashrightarrow\sS$ is not liftable; 
however, a representative of the equivalence class of $\psi^{-1}$ is defined by:
\begin{equation}
 \begin{array}{c}
y_{12}^2-y_{11}y_{13},\  
y_8y_{12}-y_6y_{13},\  
y_8y_{11}-y_6y_{12},\  
-y_6y_{10}+y_7y_{11}+y_3y_{12}-y_5y_{12},\  
y_8^2-y_4y_{13},\\  
y_6y_8-y_4y_{12},\   
y_3y_8-y_2y_{12}+y_0y_{13},\  
y_6^2-y_4y_{11},\  
y_5y_6-y_1y_8-y_4y_9.
\end{array}
\end{equation}
We also point out that
  $\Sec(X)$ has dimension 
 $6$ and degree $6$ (against Proposition \ref{prop: B is QEL}).
\end{example}
\begin{example}[$r=3;n=8;a=6;d=2$]\label{example: 20}
 See also \cite{russo-simis} and \cite{semple}.
If $X=\PP_{\PP^1}(\O(1)\oplus\O(1)\oplus\O(4))$ or 
   $X=\PP_{\PP^1}(\O(1)\oplus\O(2)\oplus\O(3))$ or
   $X=\PP_{\PP^1}(\O(2)\oplus\O(2)\oplus\O(2))$, then 
$|\I_{X,\PP^8}(2)|$ defines a birational transformation
$\PP^8\dashrightarrow\sS\subset\PP^{14}$ of type $(2,2)$ 
whose base locus is $X$ and whose image is $\sS=\GG(1,5)$.
\end{example}
\begin{example}[$r=3; n=8; a=7; d=1$]\label{example: oadpDegree8}
See also \cite[Example~2.7]{ciliberto-mella-russo} and \cite{ionescu-smallinvariantsIII}.
Let $Z=\{p_1,\ldots,p_8\}\subset\PP^2$ be such that 
no $4$ of the $p_i$ are collinear and no $7$ of the $p_i$ lie on a conic and
consider the scroll $\PP_{\PP^2}(\mathcal{E})\subset\PP^7$ associated 
to the very ample vector bundle $\mathcal{E}$ of rank $2$,
given as an extension by the following exact sequence
$0\rightarrow\O_{\PP^2}\rightarrow\E\rightarrow 
\I_{Z,\PP^2}(4)\rightarrow0.$
The homogeneous ideal of $X\subset\PP^7$ is
generated by $7$ quadrics and so
the linear system $|\I_{X,\PP^8}(2)|$  defines a birational transformation 
$\psi:\PP^8\dashrightarrow\sS\subset\PP^{15}$ of type $(2,1)$.
Since we
have % $c_1(\mathcal{E})=4$, $c_2(\mathcal{E})=8$, 
$c_1(X)=12$,
$c_2(X)=15$,
$c_3(X)=6$, we deduce
$s_1(\mathcal{N}_{X,\PP^8})=-60$,
$s_2(\mathcal{N}_{X,\PP^8})=267$,
$s_3(\mathcal{N}_{X,\PP^8})=-909$,
and hence $\deg(\sS)=29$, by (\ref{eq: grado mappa razionale}).
The base locus  of the inverse of 
$\psi$ is $\psi(\PP^7)\simeq \PP^6\subset\sS\subset\PP^{15}$.
We also observe that the restriction of $\psi|_{\PP^7}:\PP^7\dashrightarrow\PP^6$
to a general hyperplane $H\simeq\PP^6\subset\PP^7$
gives rise to a transformation as in Example \ref{example: 6}.
\end{example}
\begin{example}[$r=3; n=8; a=8,9; d=1$]\label{example: edge}
If $X\subset\PP^7\subset\PP^8$ is a $3$-dimensional 
Edge variety of degree $7$ (resp. degree $6$), then 
$|\I_{X,\PP^8}(2)|$ defines a birational transformation
$\PP^8\dashrightarrow\sS\subset\PP^{16}$ 
(resp. $\PP^8\dashrightarrow\sS\subset\PP^{17}$) of type $(2,1)$ 
whose base locus is $X$ and 
whose degree of the image is $\deg(\sS)=33$ (resp. $\deg(\sS)=38$).
For memory overflow problems, 
we were not able to calculate the scheme $\sing(\sS)$;
however, it is easy to obtain that 
$1\leq \dim(\sing(\sS))<\dim(Y)=6$
and $\dim(\sing(Y))=1$, where $Y$ denotes the base locus of the inverse.
\end{example}
\begin{example}[$r=3; n=8; a=10; d=1$]\label{example: oadp10}
See also \cite{russo-simis}, \cite{semple} and \cite[\Rmnum{3} Theorem~3.8]{zak-tangent}.
We have a birational transformation
$\PP^{10}\dashrightarrow\GG(1,6)\subset\PP^{20}$ of type $(2,1)$
whose base locus 
is $\PP^1\times\PP^4\subset\PP^9\subset\PP^{10}$ and whose image is $\GG(1,6)$.
Restricting it
  to a general  $\PP^8\subset\PP^{10}$ 
 we obtain a birational transformation 
 $\psi:\PP^8\dashrightarrow\sS\subset\PP^{18}$
whose base locus $X$ is
a rational normal scroll 
(hence either $X=\PP_{\PP^1}(\O(1)\oplus\O(1)\oplus\O(3))$ 
or  $X=\PP_{\PP^1}(\O(1)\oplus\O(2)\oplus\O(2))$)
and whose image $\sS$ is  a linear section of
 $\GG(1,6)\subset\PP^{20}$. 
We denote by $Y\subset\sS$ the base locus of the inverse of $\psi$ and by  
$F=(F_0,\ldots,F_9):\PP^7\dashrightarrow\PP^9$ 
the restriction of $\psi$ to $\PP^7=\Sec(X)$. 
We have
\begin{eqnarray*}
Y&=&\overline{\psi(\PP^7)}=\overline{F(\PP^7)}=\GG(1,4)\subset\PP^9\subset\PP^{18} , \\
J_6&:=&\left\{x=[x_0,...,x_7]\in\PP^7\setminus X: \mathrm{rank}\left(\left({\partial F_i}/{\partial x_j}(x)\right)_{i,j}\right)\leq 6   \right\}_{\mathrm{red}}\\
    &=& \left\{x=[x_0,...,x_7]\in\PP^7\setminus X: \dim\left(\overline{F^{-1}\left(F(x)\right)}\right)\geq2   \right\}_{\mathrm{red}}\mbox{ and }\dim\left(J_6\right) = 5,\\
\overline{\psi\left(J_6\right)} &=& \left(\sing\left(\sS\right)\right)_{\mathrm{red}} \subset Y\mbox{ and }\dim\left(\overline{\psi\left(J_6\right)}\right) = 3. \\
\end{eqnarray*}
\end{example}
\section{Summary results}\label{sec: table}
\begin{theorem}\label{theorem: classification}
Table \ref{tabella: all cases 3-fold}
classifies all special quadratic transformations $\varphi$ 
as in \S \ref{sec: notation} and with $r\leq3$.
\end{theorem}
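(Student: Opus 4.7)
The plan is to assemble Table \ref{tabella: all cases 3-fold} by combining the dimension-by-dimension classifications already established, and then to verify the existence of each listed case by pointing to the constructions of Section \ref{sec: examples}. Thus the proof splits naturally into a \emph{list} half (every admissible $\varphi$ appears in the table) and a \emph{realization} half (every entry of the table is realized by an actual $\varphi$).

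For the list half, I would argue according to $r=\dim(\B)$. If $r=1$, Proposition \ref{prop: possibili casi 1-fold} directly exhausts the possibilities. If $r=2$, I would combine Proposition \ref{prop: possibili casi 2-fold}, which handles the cases where either $\delta>0$ or $\langle\B\rangle\subsetneq\PP^n$, with Proposition \ref{prop: r=2 B nondegenerate}, which covers the remaining case $n=6$, $\langle\B\rangle=\PP^6$; together with Lemma \ref{lemma: double point formula r=2}, which pins down the pair $(d,\Delta)$, these give the full list in dimension two. If $r=3$, the same strategy applies: Proposition \ref{prop: possibili casi C1} handles the cases with $\delta>0$ or $\langle\B\rangle\subsetneq\PP^n$, while Proposition \ref{prop: r=3 B nondegenerate} treats $n=8$, $\langle\B\rangle=\PP^8$. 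The values of $d$ and $\Delta$ in each row of the $r=3$ part of the table are then recovered either from Proposition \ref{prop: segre and chern classes} together with Proposition \ref{prop: hilbert polynomial}, or, in the structured cases (quadric fibration, scroll over a curve, scroll over a surface), from the more refined Lemmas \ref{lemma: quadric fibration}, \ref{lemma: scroll over surface} and \ref{lemma: scroll over curve}.

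For the realization half, the plan is simply to match each row of Table \ref{tabella: all cases 3-fold} against the explicit constructions in Section \ref{sec: examples}: Examples \ref{example: 1}--\ref{example: 5} cover the type-$(2,1)$ transformations with $r\leq 3$; Examples \ref{example: 2}, \ref{example: 3}, \ref{example: 6}, \ref{example: 7}, \ref{example: 8}, \ref{example: 9}, \ref{example: 10}, \ref{example: 11} take care of the remaining cases with $r\in\{1,2\}$; and Examples \ref{example: 12}--\ref{example: 20}, \ref{example: oadpDegree8}, \ref{example: edge}, \ref{example: oadp10} cover the $r=3$ cases. The cases involving an undetermined choice between two structures (scroll versus quadric fibration for $a=3$, or multiple scroll bundles realizing the same invariants) are confirmed by Examples \ref{example: 17} and \ref{example: 20}. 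Example \ref{example: a=5} is then invoked precisely to exclude the $a=5$ row in the $r=3$ statement of Proposition \ref{prop: r=3 B nondegenerate}, thereby justifying the omission of the corresponding numerical profile from the table.

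The main obstacle is bookkeeping rather than mathematical depth: one must verify that the union of cases produced by the propositions matches the table row by row, that no case has been counted twice, and that for each row at least one example from Section \ref{sec: examples} genuinely fulfills the full Assumption block (smoothness and connectedness of $\B$, factoriality and linear normality of $\sS$, liftability of $\varphi^{-1}$, and the non-equality in Assumption \ref{assumption: ipotesi}). The most delicate of these checks is the liftability/factoriality condition, which is why Example \ref{example: a=5} is singled out as a non-example and Example \ref{example: B2=singSred} is flagged as failing Assumption \ref{assumption: ipotesi}; once the remaining entries are cross-checked against the remarks in Section \ref{sec: notation} on parafactoriality and projective normality, the theorem follows.
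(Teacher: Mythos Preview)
Your assembly argument is exactly what the paper intends: the theorem has no separate proof in the paper because it is a summary statement, and the content is precisely the union of Propositions \ref{prop: possibili casi 1-fold}, \ref{prop: possibili casi 2-fold}, \ref{prop: r=2 B nondegenerate}, \ref{prop: possibili casi C1}, \ref{prop: r=3 B nondegenerate}, together with the numerical lemmas that pin down $(d,\Delta,c)$. Your ``list half'' is correct and complete.

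Your ``realization half'', however, slightly misreads what the theorem asserts. Table \ref{tabella: all cases 3-fold} carries an \emph{Existence} column with flags $\exists$, $\exists^{\ast}$, $\exists^{\ast\ast}$, and $?$, whose meanings are spelled out just before the table: several rows are only conjecturally realized (the scroll over a birationally ruled surface with $a=0$, $\lambda=12$ is flagged $?$), and for others it is not verified that $\sS$ satisfies all of the assumptions of \S\ref{sec: notation} (the $\exists^{\ast}$ rows). The theorem is therefore a classification of the \emph{possible} profiles of $\varphi$, not a claim that every row occurs; the examples of \S\ref{sec: examples} populate the Existence column but are not part of the proof of the theorem itself. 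So you should drop the requirement that ``every entry of the table is realized by an actual $\varphi$'' and instead present the examples as supporting evidence recorded in the table, exactly as the paper does. Apart from this over-reach on the realization side, your proposal matches the paper's approach.
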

As a consequence, 
we generalize \cite[Corollary~6.8]{note}.
\begin{corollary}\label{corollary: coindex 2}
 Let $\varphi:\PP^n\dashrightarrow\sS\subseteq\PP^{n+a}$ be 
as in \S \ref{sec: notation}. If $\varphi$ is of type $(2,3)$ and 
$\sS$ has coindex $c=2$,
then $n=8$, $r=3$ and one of the following cases holds: 
\begin{itemize}
\item $\Delta=3$, $a=1$, $\lambda=11$, $g=5$, $\B$ is the blow-up of $Q^3$ at $5$ points; % $p_1,\ldots,p_5$  and  $|H_{\B}|=|2H_{Q^3}-p_1-\cdots-p_5|$; 
\item $\Delta=4$, $a=2$, $\lambda=10$, $g=4$, $\B$ is a scroll over $Q^2$;  
\item $\Delta=5$, $a=3$, $\lambda=9$,  $g=3$, $\B$ is a quadric fibration over $\PP^1$. 
\end{itemize}
\end{corollary}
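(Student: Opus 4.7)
The plan is to combine two numerical inputs to pin down $n$ and $r$, and then filter the resulting short classification list by the condition $d=3$.

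First, the identity $c = (1-2d)r + dn - 3d + 2$, derived in Section~\ref{sec: notation} by comparing the two expressions for $K_{\widetilde{\PP^n}}$, specialised to $d=3$ and $c=2$ gives the linear constraint $3n - 5r = 9$. Combined with the QEL-variety bound $n\leq 2r+2$ of Proposition \ref{prop: B is QEL}, this forces $5r+9\leq 6r+6$, hence $r\geq 3$. Since Theorem \ref{theorem: classification} treats only $r\leq 3$, we conclude $r=3$ and $n=8$. Moreover, with $(r,n,d)=(3,8,3)$ the dichotomy of Proposition \ref{prop: possibili casi C1} immediately reduces us to $\langle \B\rangle = \PP^8$, since every case listed there has $d\in\{1,2\}$.

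Proposition \ref{prop: r=3 B nondegenerate} now supplies a finite list of possible abstract types for $\B$. For each entry one extracts the pair $(d,\Delta)$ either from the direct computations recorded in Examples \ref{example: 12}--\ref{example: 18} and \ref{example: 20}, or from Lemmas \ref{lemma: quadric fibration}, \ref{lemma: scroll over surface}, \ref{lemma: scroll over curve}, combined with Proposition \ref{prop: segre and chern classes} and the formulas (\ref{eq: grado mappa razionale})--(\ref{eq: sollevabile}) of the Remark. Keeping only the entries with $d=3$ leaves exactly three survivors: $(a,\lambda,g,\Delta)=(1,11,5,3)$ for the blow-up of $Q^3$ at five points, $(2,10,4,4)$ for the scroll over $Q^2$, and $(3,9,3,5)$ for the quadric fibration over $\PP^1$; all other entries give either $d\in\{1,2\}$ (e.g.\ hyperplane section of $\PP^1\times Q^3$, rational normal scroll, scroll over $\PP^2$), $d=4$ (scroll over $\FF_1$, linear section of $S^{10}$) or $d=5$ (the $a=0$ cases).

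The argument is essentially bookkeeping once $(d,\Delta)$ has been extracted for each abstract type. The main delicate point is to separate, inside the joint entry $(a,\lambda,g)=(3,9,3)$ of Proposition \ref{prop: r=3 B nondegenerate}, the scroll over $\PP^2$ from the quadric fibration over $\PP^1$: these share the same discrete invariants of $\B$ but carry different Segre classes and hence different pairs $(d,\Delta)=(2,8)$ and $(3,5)$, so only one of them survives the $d=3$ filter. This separation is exactly what the Segre-class computation in Example \ref{example: 17} achieves.
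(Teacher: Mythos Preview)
Your derivation of $r\geq 3$ from $3n-5r=9$ together with $n\leq 2r+2$ is correct, but the next sentence is a genuine gap: the fact that Theorem~\ref{theorem: classification} only \emph{treats} the range $r\leq 3$ does not imply that $r\leq 3$ holds for every $\varphi$ as in \S\ref{sec: notation}. Nothing in the standing assumptions of \S\ref{sec: notation} bounds $r$ from above, and the Corollary asserts $r=3$ as a \emph{conclusion}, not as a hypothesis. From $3n=5r+9$ you only know $r\equiv 0\pmod 3$, so a priori $r\in\{3,6,9,\ldots\}$ and you must rule out $r\geq 6$.

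The paper closes this gap with two external ingredients. Writing $\delta=2r+2-n=(r-3)/3$, Russo's Divisibility Theorem for $QEL$-varieties \cite[Theorem~2.8]{russo-qel1} forces $(r,n,\delta)\in\{(3,8,0),(6,13,1),(9,18,2)\}$; then the classification of conic-connected manifolds \cite[Theorem~2.2]{ionescu-russo-conicconnected} eliminates the cases $\delta\geq 1$, leaving only $(r,n,\delta)=(3,8,0)$. Once $r=3$ and $n=8$ are established, your filtering of the list in Proposition~\ref{prop: r=3 B nondegenerate} by the condition $d=3$ is correct and matches what the paper does (it simply says ``apply the results in \S\ref{sec: dim 3}'').
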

\begin{proof}
 We have that $\B\subset\PP^n$ is a $QEL$-variety of type
$\delta=(r-d-c+2)/d=(r-3)/3$ and $n=((2d-1)r+3d+c-2)/d=(5r+9)/3$.
From Divisibility Theorem \cite[Theorem~2.8]{russo-qel1}, we deduce 
$(r,n,\delta)\in\{(3,8,0),(6,13,1),(9,18,2)\}$ and 
from the classification of $CC$-manifolds \cite[Theorem~2.2]{ionescu-russo-conicconnected}, 
we obtain  $(r,n,\delta)=(3,8,0)$. Now we apply the results in \S \ref{sec: dim 3}.
\end{proof}
We can also regard Corollary \ref{corollary: coindex 2} 
in the same spirit of \cite[Theorem~5.1]{note}, 
where  we have classified the transformations $\varphi$ 
of type $(2,2)$, when $\sS$ has coindex $1$.
Moreover, in the same fashion, one can prove the following:
\begin{proposition}
 Let $\varphi$ be as in \S \ref{sec: notation} and of type $(2,1)$. If
$c=2$, then $r\geq1$ and $\B$ is $\PP^1\times\PP^2\subset\PP^5$ 
or one of its linear sections.
If  $c=3$, then $r\geq2$ and $\B$ is either 
   $\PP^1\times\PP^3\subset\PP^7$ 
or $\GG(1,4)\subset\PP^9$
or one of their linear sections.
If $c=4$, then $r\geq3$ and $\B$ is either an $OADP$ $3$-fold in $\PP^7$ 
or $\PP^1\times\PP^4\subset\PP^{9}$ 
or one of its hyperplane sections.
\end{proposition}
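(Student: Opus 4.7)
The plan is to exploit the type $(2,1)$ hypothesis, which forces the secant variety to be linear, together with the explicit classification of QEL-manifolds of large type.

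First, since $d=1$, Proposition \ref{prop: B is QEL} gives that $\Sec(\B)$ is a hyperplane of $\PP^n$. Consequently $\B$ is contained in a hyperplane $\PP^{n-1}\subset\PP^n$, spans this hyperplane, and is a QEL-variety of type $\delta=2r+2-n$ with $\Sec(\B)=\PP^{n-1}$. Specializing the coindex relation $c=(1-2d)r+dn-3d+2$ to $d=1$ yields $c=n-r-1$, so $\delta=r-(c-1)$. For the three cases in the statement this gives $\delta=r-1$, $r-2$, $r-3$ respectively, and the bound $\delta\geq 0$ supplies the stated lower bounds $r\geq 1,\,2,\,3$.

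Next I would identify the pair $(\B,\PP^{n-1})$ by invoking the QEL-manifold classification. When $r\leq 3$ one can read off the answer directly from Propositions \ref{prop: possibili casi 1-fold}, \ref{prop: possibili casi 2-fold}, \ref{prop: possibili casi C1}, which already tabulate all admissible QEL-manifolds of the required invariants (the $\delta=0$ cases there giving OADP varieties, and the higher $\delta$ cases giving the linear sections of Segre varieties and Grassmannians). For larger $r$ the ratio $\delta/r$ is strictly greater than $1/2$, so one appeals to \cite[Propositions~1.3 and 3.4]{russo-qel1}, to the Divisibility Theorem \cite[Theorem~2.8]{russo-qel1}, to the classification of conic-connected manifolds \cite[Theorem~2.2]{ionescu-russo-conicconnected}, and to the LQEL classification in \cite[Theorems~4.10 and 7.1]{ciliberto-mella-russo}. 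In each case the output matches the announced list: linear sections of $\PP^1\times\PP^2\subset\PP^5$ for $c=2$; linear sections of $\PP^1\times\PP^3\subset\PP^7$ or of $\GG(1,4)\subset\PP^9$ for $c=3$; and either an OADP $3$-fold in $\PP^7$, or $\PP^1\times\PP^4\subset\PP^9$, or a hyperplane section of the latter, for $c=4$.

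The existence direction --- that each variety on the list actually arises as the base locus of some such $\varphi$ with the prescribed coindex of $\sS$ --- is covered by the explicit constructions in Examples \ref{example: 1}, \ref{example: 5}, \ref{example: 10}, \ref{example: oadpDegree8}, \ref{example: edge}, \ref{example: oadp10}. The main obstacle is the uniform treatment across all admissible $r$: one must rule out, for each $c\in\{2,3,4\}$ and for each $r$ with $\delta=r-c+1\geq 0$, any sporadic QEL-manifold with these invariants that fails to be a linear section of the models listed. Here the Divisibility Theorem is the decisive tool, exactly in the spirit of the proof of Corollary \ref{corollary: coindex 2}, which already illustrates how strongly the type $\delta$ constrains the numerical triple $(r,n,\delta)$.
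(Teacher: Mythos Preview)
Your overall strategy coincides with what the paper intends: the paper gives no explicit proof here but only the hint ``in the same fashion'' as Corollary~\ref{corollary: coindex 2}, i.e.\ reduce to the QEL classification via the Divisibility Theorem and the conic-connected manifold theorem, together with the low-dimensional tables already established. Your reduction $c=n-r-1$, $\delta=r-c+1$ and the lower bounds on $r$ are correct.

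There is, however, a concrete error. You assert that for $r>3$ one always has $\delta/r>1/2$, and you use this to justify invoking the strong QEL classification. This is false in several of the cases you must actually handle:
for $c=3$, $r=4$ one gets $\delta/r=2/4=1/2$;
for $c=4$, $r=4$ one gets $\delta/r=1/4$;
for $c=4$, $r=5$ one gets $\delta/r=2/5$.
These are precisely the cases producing $\PP^1\times\PP^3$, a hyperplane section of $\PP^1\times\PP^4$, and $\PP^1\times\PP^4$ itself, so they cannot be bypassed. The inequality $\delta/r>1/2$ is therefore not the right hook.

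The fix is easy and does not change your toolkit. For every $r>3$ and $c\in\{2,3,4\}$ one has $\delta=r-c+1\geq 1$, so $\B$ is conic-connected and \cite[Theorem~2.2]{ionescu-russo-conicconnected} applies directly, without any hypothesis on $\delta/r$; alternatively, the case $r=4$ is already tabulated in Proposition~\ref{prop: delta mag0 4fold}. The Divisibility Theorem then bounds $r$ from above (e.g.\ it rules out $r\geq6$ when $c=4$), and the CC-classification pins down the remaining finitely many possibilities. So replace the $\delta/r>1/2$ sentence with the observation that $\delta>0$ for all $r\geq c$, and invoke the CC-manifold classification on that basis.
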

In Table \ref{tabella: all cases 3-fold} we use the following shortcuts:
\begin{description}
\item[$\exists^{\ast}$] flags cases for which
is known a transformation $\varphi$ with base locus $\B$ as required,
but we do not know if the image $\sS$
satisfies all the assumptions in \S \ref{sec: notation};
\item[$\exists^{\ast\ast}$] flags cases for which
 is known that there is a smooth irreducible variety $X\subset\PP^n$ 
such that, if $X=V(H^0(\I_X(2)))$, then the linear system $|\I_X(2)|$ defines 
a birational  transformation 
$\varphi:\PP^n\dashrightarrow\sS=\overline{\varphi(\PP^n)}\subset\PP^{n+a}$ 
as stated;
\item[$?$] flags cases for which we do not know if there exists at least
an abstract variety
$\B$ having the structure and the invariants required;
\item[$\exists$] flags cases for which everything works fine.
\end{description}
\begin{table}
\begin{center}
\begin{tabular}{|c||c|c|c|c|c|c|c|c||ll|}
\hline
$r$ & $n$ & $a$ & $\lambda$ & $g$  & Abstract structure of $\B$ & $d$ & $\Delta$ & $c$ & \multicolumn{2}{|c|}{Existence}   \\  
\hline
\hline
\multirow{4}{*}{$1$} & $3$ & $1$ & $2$ & $0$ &  $\nu_2(\PP^1)\subset\PP^2$ & $1$ & $2$ & $1$ & $\exists$ & Ex. \ref{example: 1} \\
\cline{2-11}
 & $4$ & $0$ & $5$ & $1$ & Elliptic curve & $3$ & $1$ & $0$ & $\exists$ & Ex. \ref{example: 2} \\
\cline{2-11}
 & $4$ & $1$ & $4$ & $0$ & $\nu_4(\PP^1)\subset\PP^4$ & $2$ & $2$ & $1$ & $\exists$ & Ex. \ref{example: 3} \\ 
\cline{2-11}
 & $4$ & $3$ & $3$ & $0$ & $\nu_3(\PP^1)\subset\PP^3$ & $1$ & $5$ & $2$ & $\exists$ & Ex. \ref{example: 5} \\
\hline
\hline
\multirow{14}{*}{$2$} & $4$ & $1$ & $2$ & $0$  &  $\PP^1\times\PP^1\subset\PP^3$ & $1$ & $2$ & $1$ & $\exists$ & Ex. \ref{example: 1} \\ 
\cline{2-11}
 & $5$ & $0$ & $4$ & $0$  &  $\nu_2(\PP^2)\subset\PP^5$ & $2$ & $1$ & $0$ & $\exists$ & Ex. \ref{example: 3} \\ 
\cline{2-11}
 & $5$ & $3$ & $3$ & $0$  & Hyperplane section of $\PP^1\times\PP^2\subset\PP^5$ & $1$ & $5$ & $2$ & $\exists$ & Ex. \ref{example: 5} \\ 
\cline{2-11}
 & $6$ & $0$ & $7$ & $1$  & Elliptic scroll $\PP_{C}(\E)$ with $e(\E)=-1$ & $4$ & $1$ & $0$& $\exists$ & Ex. \ref{example: 6} \\ 
\cline{2-11}
 & $6$ & $0$ & $8$ & $3$  & \begin{tabular}{c} Blow-up of $\PP^2$ at $8$ points $p_1,\ldots,p_8$,\\ $|H_{\B}|=|4H_{\PP^2}-p_1-\cdots-p_8|$ \end{tabular} & $4$ & $1$ & $0$&$\exists$ & Ex. \ref{example: 6} \\ 
\cline{2-11}
 & $6$ & $1$ & $7$ & $2$  & \begin{tabular}{c} Blow-up of $\PP^2$ at $6$ points $p_0,\ldots,p_5$,\\ $|H_{\B}|=|4H_{\PP^2}-2p_0-p_1-\cdots-p_5|$ \end{tabular} & $3$ & $2$ & $1$& $\exists$ & Ex. \ref{example: 7} \\ 
\cline{2-11}
 & $6$ & $2$ & $6$ & $1$  & \begin{tabular}{c} Blow-up of $\PP^2$ at $3$ points $p_1,p_2,p_3$,\\ $|H_{\B}|=|3H_{\PP^2}-p_1-p_2-p_3|$ \end{tabular} & $2$ & $4$ & $2$& $\exists$ & Ex. \ref{example: 8} \\ 
\cline{2-11}
 & $6$ & $3$ & $5$ & $0$  & $\PP_{\PP^1}(\O(1)\oplus\O(4))$ or $\PP_{\PP^1}(\O(2)\oplus\O(3))$   & $2$ & $5$ & $2$ & $\exists$ & Ex. \ref{example: 9} \\ 
\cline{2-11}
 & $6$ & $5$ & $5$ & $1$  & \begin{tabular}{c} Blow-up of $\PP^2$ at $4$ points $p_1\ldots,p_4$,\\ $|H_{\B}|=|3H_{\PP^2}-p_1-\cdots-p_4|$ \end{tabular} & $1$ & $12$ & $3$ & $\exists$ & Ex. \ref{example: 10} \\ 
\cline{2-11}
 & $6$ & $6$ & $4$ & $0$  & $\PP_{\PP^1}(\O(1)\oplus\O(3))$ or $\PP_{\PP^1}(\O(2)\oplus\O(2))$ & $1$ & $14$ & $3$ & $\exists$ & Ex. \ref{example: 11} \\ 
\hline
\hline
\multirow{23}{*}{$3$} & $5$ & $1$ & $2$ & $0$  & $Q^3\subset\PP^4$ & $1$ & $2$ & $1$&$\exists$ & Ex. \ref{example: 1} \\
\cline{2-11}
 & $6$ & $3$ & $3$ & $0$  & $\PP^1\times\PP^2\subset\PP^5$ & $1$ & $5$ & $2$& $\exists$ & Ex. \ref{example: 5} \\
\cline{2-11}
 & $7$ & $1$ & $6$ & $1$  & Hyperplane section of $\PP^2\times\PP^2\subset\PP^8$ & $2$ & $2$ & $1$& $\exists$ & Ex. \ref{example: 3} \\
\cline{2-11}
 & $7$ & $5$ & $5$ & $1$  & Linear section of $\GG(1,4)\subset\PP^9$ & $1$ & $12$ &$3$ & $\exists$ & Ex. \ref{example: 10} \\ 
\cline{2-11}
 & $7$ & $6$ & $4$ & $0$  & $\PP_{\PP^1}(\O(1)\oplus\O(1)\oplus\O(2))$ & $1$ & $14$ &$3$ & $\exists$ & Ex. \ref{example: 11} \\
\cline{2-11}
 & $8$ & $0$ & $12$ & $6$    & \begin{tabular}{c} Scroll $\PP_{Y}(\E)$, $Y$ birat. ruled surface, \\ $K_Y^2=5$, $c_2(\E)=8$, $c_1^2(\E)=20$ \end{tabular} & $5$ & $1$ & $0$ & $?$ & \\
\cline{2-11}
 & $8$ & $0$ & $13$ & $8$    & \begin{tabular}{c} Variety obtained as the projection \\ of a Fano variety $X$ from a point $p\in X$ \end{tabular} & $5$ & $1$ & $0$ & $\exists$ & Ex. \ref{example: 12} \\
\cline{2-11}
 & $8$ & $1$ &  $11$   &  $5$  & \begin{tabular}{c} Blow-up of $Q^3$ at $5$ points $p_1,\ldots,p_5$, \\ $|H_{\B}|=|2H_{Q^3}-p_1-\cdots-p_5|$  \end{tabular} & $3$ &  $3$ & $2$ & $\exists$ &  Ex. \ref{example: 13} \\
\cline{2-11}
 & $8$ & $1$ &  $11$   &  $5$  & Scroll over $\PP_{\PP^1}(\O\oplus\O(-1))$  & $4$ &  $2$& $1$ & $\exists^{\ast\ast}$ & Ex. \ref{example: 14} \\
\cline{2-11}
 & $8$ & $1$ &  $12$   &  $7$ & Linear section of $S^{10}\subset\PP^{15}$  & $4$ & $2$& $1$ &$\exists$ & Ex. \ref{example: 15} \\
\cline{2-11}
 & $8$ & $2$ & $10$ & $4$ & Scroll over $Q^2$ & $3$ & $4$ & $2$ & $\exists^{\ast}$ & Ex. \ref{example: 16} \\
\cline{2-11}
 & $8$ & $3$ & $9$ & $3$ & Scroll over $\PP^2$ & $2$ & $8$ & $3$ & $\exists^{\ast}$ & Ex. \ref{example: 17} \\
\cline{2-11}
 & $8$ & $3$ & $9$ & $3$ & Quadric fibration over $\PP^1$ & $3$ & $5$ & $2$ & $\exists^{\ast}$ & Ex. \ref{example: 17} \\
\cline{2-11}
 & $8$ &$4$ & $8$ & $2$ & Hyperplane section of $\PP^1\times Q^3$ & $2$ & $10$ & $3$ & $\exists^{\ast}$ & Ex. \ref{example: 18}  \\
\cline{2-11}
 & $8$ &$6$ & $6$ & $0$ & Rational normal scroll & $2$ & $14$ & $3$ & $\exists$ &  Ex. \ref{example: 20} \\
\cline{2-11}
 & $8$ & $7$ & $8$ & $3$  & \begin{tabular}{c} $\PP_{\PP^2}(\E)$, where  $0\rightarrow\O_{\PP^2}\rightarrow$ \\$\rightarrow\E\rightarrow \I_{\{p_1,\ldots,p_8\},\PP^2}(4)\rightarrow0$ \end{tabular}  & $1$ & $29$ & $4$ & $\exists^{\ast}$ &  Ex. \ref{example: oadpDegree8} \\
\cline{2-11}
 & $8$ & $8$ & $7$ & $2$  & Edge variety & $1$ & $33$ & $4$ & $\exists^{\ast}$ & Ex. \ref{example: edge} \\
\cline{2-11}
 & $8$ & $9$ & $6$ & $1$  & $\PP^1\times\PP^1\times\PP^1\subset\PP^7$ & $1$ & $38$ & $4$ & $\exists^{\ast}$ & Ex. \ref{example: edge} \\
\cline{2-11}
 & $8$ & $10$ & $5$ & $0$  & Rational normal scroll & $1$ & $42$ & $4$ & $\exists$ & Ex. \ref{example: oadp10} \\ % \begin{tabular}{c} $\PP_{\PP^1}(\O(1)\oplus\O(1)\oplus\O(3))$ or \\ $\PP_{\PP^1}(\O(1)\oplus\O(2)\oplus\O(2))$ \end{tabular} 
\hline
\end{tabular}
\end{center}
 \caption{All transformations $\varphi$ as in \S \ref{sec: notation} and with $r\leq3$}
\label{tabella: all cases 3-fold}
\end{table}
\clearpage
\section{Towards the case of dimension 4}\label{sec: dim 4}
In this section we treat the case in which $r=4$.
However, when $\delta=0$, 
we are well away from having an exhaustive classification.

Proposition \ref{prop: delta mag0 4fold} follows from  
\cite[Propositions~1.3, 3.4, Corollary~3.2]{russo-qel1}  
and \cite[Theorem~2.2]{ionescu-russo-conicconnected}.
\begin{proposition}\label{prop: delta mag0 4fold} 
If $r=4$, then either $n=10$, $d\geq2$, $\langle \B \rangle = \PP^{10}$, or  
one of the following cases holds:
\begin{itemize}
 \item  $n=6$,  $d=1$, $\delta=4$, $\B=Q^4\subset\PP^5$ is a quadric;
 \item  $n=8$,  $d=1$, $\delta=2$, $\B\subset\PP^7$ is either $\PP^1\times\PP^3\subset\PP^7$ or a linear section of $\GG(1,4)\subset\PP^9$;
 \item  $n=8$,  $d=2$, $\delta=2$, $\B$ is $\PP^2\times\PP^2\subset\PP^8$;
 \item  $n=9$,  $d=1$, $\delta=1$, $\B$ is a hyperplane section of $\PP^1\times\PP^4\subset\PP^9$;
 \item  $n=10$, $d=1$, $\delta=0$, $\B\subset\PP^9$ is an $OADP$-variety. 
\end{itemize}
\end{proposition}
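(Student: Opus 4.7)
The plan is to apply Proposition \ref{prop: B is QEL} and then invoke the classification of $QEL$-manifolds of dimension four, following the same scheme used in Sections \ref{sec: dim 1}--\ref{sec: dim 3}. By Proposition \ref{prop: B is QEL}, $\B \subset \PP^n$ is a $QEL$-variety of type $\delta = 2r+2-n = 10-n$ and $\Sec(\B)$ is a hypersurface of degree $2d-1$. Since $\delta \geq 0$ we have $n \leq 10$, and $d=1$ holds if and only if $\Sec(\B)$ is a hyperplane, in which case $\B \subset \PP^{n-1}$ is a $QEL$ $4$-fold whose secant variety fills its ambient $\PP^{n-1}$.

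First I would treat the case $d=1$ by examining each value of $\delta = 10-n \in \{0,1,2,3,4\}$. When $\delta = r = 4$ (so $n = 6$), \cite[Proposition~1.3]{russo-qel1} forces $\B = Q^4 \subset \PP^5$. When $\delta = 3$ (so $n = 7$), the divisibility constraint of \cite[Corollary~3.2]{russo-qel1} rules out the existence of such a $QEL$ $4$-fold. When $\delta = 2$ (so $n = 8$), the combination of \cite[Theorem~2.2]{ionescu-russo-conicconnected} with \cite[Proposition~3.4]{russo-qel1} leaves only $\B = \PP^1 \times \PP^3$ or a $4$-dimensional linear section of $\GG(1,4) \subset \PP^9$. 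When $\delta = 1$ (so $n = 9$), the same classifications identify $\B$ as a hyperplane section of $\PP^1 \times \PP^4 \subset \PP^9$. Finally, when $\delta = 0$ (so $n = 10$), no further restriction is imposed beyond the fact that $\B \subset \PP^9$ is an $OADP$ $4$-fold.

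For $d \geq 2$, the hypersurface $\Sec(\B)$ has degree at least $3$ and in particular is not a hyperplane, so $\B$ is nondegenerate in $\PP^n$. If $n = 10$ we land in the first alternative of the statement, which is left unclassified. If $n < 10$ then $\delta > 0$ and $\B$ is a nondegenerate $QEL$ $4$-fold whose secant variety is a proper nonlinear hypersurface; applying once more \cite[Theorem~2.2]{ionescu-russo-conicconnected} together with \cite[Proposition~3.4]{russo-qel1}, the only possibility is the Severi $4$-fold $\PP^2 \times \PP^2 \subset \PP^8$, whose secant variety is the classical cubic determinantal hypersurface, giving $2d-1 = 3$ and $\delta = 2$.

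The main obstacle I anticipate is not computational but bibliographical: one must carefully assemble the divisibility bounds with the classifications of $QEL$ and conic-connected manifolds to match each admissible triple $(\delta, d, \langle \B \rangle)$ with exactly one of the listed projective models, and in particular to verify that the intermediate value $\delta = 3$ is genuinely excluded. Once the cited classifications are in hand, the argument reduces to reading off the resulting list of $4$-folds.
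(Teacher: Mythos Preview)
Your proposal is correct and matches the paper's approach exactly: the paper simply asserts that the proposition follows from \cite[Propositions~1.3, 3.4, Corollary~3.2]{russo-qel1} and \cite[Theorem~2.2]{ionescu-russo-conicconnected}, and your argument is a faithful unpacking of precisely those citations, organized by the value of $\delta$ and the degeneracy of $\B$. The only addition you provide over the paper is the explicit explanation of why $\delta=3$ is excluded via the divisibility constraint, which the paper leaves implicit in its citation of \cite[Corollary~3.2]{russo-qel1}.
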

In Proposition \ref{prop: 4foldnondegenerate}, 
 we more generally assume that the image $\sS$ is  
nondegenerate, normal and linearly normal (not necessarily factorial) 
and furthermore we do not assume Assumptions 
\ref{assumption: liftable} and \ref{assumption: ipotesi}.
As noted earlier, we have $P_{\B}(1)=11$ and $P_{\B}(2)=55-a$ and hence 
\begin{eqnarray*}
P_{\B}(t)&=& \lambda \begin{pmatrix}t+3\cr 4\end{pmatrix}+\left( 1-g\right)  \begin{pmatrix}t+2\cr 3\end{pmatrix}+\left(2g-3\lambda+\chi(\O_{\B})-a+31 \right) \begin{pmatrix}t+1\cr 2\end{pmatrix} \\
 && +\left(-g+2\lambda-2\chi(\O_{\B})+a-21\right) t+\chi(\O_{\B}) .
\end{eqnarray*}
\begin{proposition}\label{prop: 4foldnondegenerate}
If $r=4$, $n=10$ and $\langle \B\rangle=\PP^{10}$, 
then  one of the following cases holds:
\begin{itemize}
 \item $a=10$, $\lambda=7$,             $g=0$,      $\chi(\O_{\B})=1$,                 $\B$ is a rational normal scroll;
 \item $a=7$,  $\lambda=10$,            $g=3$,      $\chi(\O_{\B})=1$,                 $\B$  is either
  \begin{itemize}
  \item a hyperplane section of $\PP^1\times Q^4\subset\PP^{11}$ or
  \item $\PP(\T_{\PP^2}\oplus \O_{\PP^2}(1))\subset\PP^{10}$;
  \end{itemize}
 \item $a=6$,  $\lambda=11$,            $g=4$,      $\chi(\O_{\B})=1$,                 $\B$ is a quadric fibration over $\PP^1$;
 \item $a=5$,  $\lambda=12$,            $g=5$,      $\chi(\O_{\B})=1$,                 $\B$ is one of the following:
\begin{itemize}
      \item $\PP^4$ blown up at $4$ points $p_1\ldots,p_4$ embedded by $|2H_{\PP^4}-p_1-\cdots-p_4|$,
      \item a scroll over a ruled surface,
      \item a quadric fibration over $\PP^1$; 
\end{itemize}         
 \item $a=4$,  $\lambda=14$,            $g=8$,      $\chi(\O_{\B})=1$,                 $\B$ is either
 \begin{itemize}
 \item a linear section of $\GG(1,5)\subset\PP^{14}$ or 
 \item the product of $\PP^1$ with a Fano variety of even index;  
 \end{itemize}
 \item $a=4$,  $\lambda=13$,            $g=6$,      $\chi(\O_{\B})=1$,                 $\B$ is either  
\begin{itemize}
      \item a scroll over a birationally ruled surface or
      \item a quadric fibration over $\PP^1$; 
\end{itemize} 
 \item $a=3$,  $14\leq\lambda\leq 16$,  $g\leq 11$, $\chi(\O_{\B})=(-g+2\lambda-18)/3$;
 \item $a=2$,  $15\leq \lambda\leq 18$, $g\leq 14$, $\chi(\O_{\B})=(-g+2\lambda-19)/3$; 
% \item $a\leq1$, $\lambda\geq15$.
 \item $a=1$, $15\leq\lambda\leq 20$, $g\leq17$, $\chi(\O_{\B})=(-g+2\lambda-20)/3$;
 \item $a=0$, $15\leq\lambda$. 
\end{itemize}
\end{proposition}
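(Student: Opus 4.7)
The plan is to proceed exactly as in the proofs of Propositions \ref{prop: hilbert polynomial}, \ref{prop: r=2 B nondegenerate} and \ref{prop: r=3 B nondegenerate}: first derive numerical data on $\lambda$, $g$, $\chi(\O_\B)$, then invoke known classifications of smooth $4$-folds of low degree. Since $\langle \B\rangle=\PP^{10}$ gives $h^0(\I_\B(1))=0$, while normality and linear normality of $\sS$ together with \cite[Lemma~2.2]{note} give $h^0(\I_\B(2))=11+a$, the key input is the vanishing $h^j(\PP^{10},\I_\B(k))=0$ for all $j,k\geq 1$. This is exactly the regime of \cite[Proposition~1.8]{mella-russo-baselocusleq3} (note that $n=2r+2$ so $\delta=0\geq 0$), and it gives $P_\B(1)=11$ and $P_\B(2)=55-a$, which are two linear conditions on the five coefficients of $P_\B$; substituting into the binomial expansion of $P_\B$ produces the displayed formula.

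Next, I would constrain $\lambda$ and $g$ by the usual Castelnuovo argument applied to a sequence of general linear sections $\Lambda\subset C\subset S\subset T\subset\B$, with $C\subset\PP^7$ the curve section and $\Lambda\subset\PP^6$ the $0$-dimensional section. Proceeding as in \cite[Lemma~6.1]{note}, one obtains
\begin{displaymath}
\min\{\lambda,16\}\leq h_{\Lambda}(2)\leq 28-h^0(\PP^{10},\I_\B(2))=17-a,
\end{displaymath}
which, combined with Castelnuovo's bound on $g$ in terms of $\lambda$ and with its refinement \cite[Theorem~2.5]{ciliberto-hilbertfunctions}, together with the integrality of $\chi(\O_\B)$ and the inequalities $g\geq 0$, $\chi(\O_\B)\geq 1$, produces a finite list of numerical possibilities for each $a\geq 4$ and an explicit range of admissible $(\lambda,g)$ for $a\in\{0,1,2,3\}$.

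For each surviving triple $(a,\lambda,g)$ with $a\geq 4$, the abstract identification is carried out by invoking the classification of smooth $4$-folds of low degree: Ionescu's series \cite{ionescu-smallinvariants,ionescu-smallinvariantsII,ionescu-smallinvariantsIII,ionescu-degsmallrespectcodim} handles the cases of small $\lambda$ relative to the codimension (in particular $a=10$ forcing a rational normal scroll, $a=7$ giving the two listed alternatives, and $a\in\{5,6\}$ giving the scroll and quadric fibration possibilities), while adjunction theory via \cite[page~62]{fujita-polarizedvarieties} together with Fujita's and Fania--Livorni's classifications completes the structure theorem in the remaining sub-cases of $a=4$. Spurious numerical cases are eliminated by arguments parallel to Claim \ref{claim: KsHs<0} and to the $a=0$ subcases analyzed in Lemma \ref{lemma: r=3 B nondegenerate}.

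The main obstacle is the range $0\leq a\leq 3$: here the Castelnuovo-type inequalities leave too wide a range of $(\lambda,g,\chi(\O_\B))$ to be reduced to a finite list of known $4$-folds, and no classification of smooth nondegenerate $4$-folds in $\PP^{10}$ with the relevant invariants is available in the literature. For this reason, the statement records only the numerical possibilities when $a\leq 3$; in particular $a=0$ would correspond to a quadratic Cremona transformation of $\PP^{10}$ with a smooth $4$-dimensional base locus, a case for which even existence of examples is unsettled.
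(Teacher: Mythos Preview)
Your outline has the right shape but is missing the central mechanism of the paper's argument, and contains a couple of concrete errors.

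First, the Castelnuovo inequality: for $\Lambda\subset\PP^6$ the correct bound is $\min\{\lambda,13\}\leq h_\Lambda(2)$ (general points in $\PP^6$ impose independent conditions on quadrics only up to $2\cdot 6+1=13$), not $\min\{\lambda,16\}$. This alone only yields $a\leq 10$ and, for $\lambda\geq 13$, $a\leq 4$; to push further and obtain the upper bounds $\lambda\leq 16,18,20$ for $a=3,2,1$ one must invoke successively \cite[Lemma~1.10, Theorems~3.1 and 3.8]{ciliberto-hilbertfunctions} and \cite[Theorem~2.17(b)]{petrakiev}, not just the refinement \cite[Theorem~2.5]{ciliberto-hilbertfunctions}.

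Second, and more seriously, you never explain where the explicit formulas $\chi(\O_\B)=(-g+2\lambda-21+a)/3$ and the \emph{lower} bounds on $\lambda$ come from. You appeal to an inequality $\chi(\O_\B)\geq 1$, which is neither stated nor proved in the paper and is not obvious for a $4$-fold. The paper's actual device is Kodaira vanishing plus Serre duality: for $t\geq 1$ one has $P_\B(-t)=h^0(\B,K_\B+tH_\B)$, and since $K_\B\cdot H_\B^3=2g-2-3\lambda$ is controlled by the refined Castelnuovo bound $\theta(\lambda)$, the divisor $K_\B+tH_\B$ cannot be effective once $\lambda$ is below an explicit threshold. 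This forces $P_\B(-1)=0$ whenever $\lambda\leq 24$ (unless $K_\B\sim -H_\B$), $P_\B(-2)=P_\B(-1)=0$ whenever $\lambda\leq 14$ (unless $K_\B\sim -2H_\B$), and $P_\B(-3)=P_\B(-2)=P_\B(-1)=0$ whenever $\lambda\leq 8$. Substituting $t=-1$ into the displayed Hilbert polynomial gives exactly $\chi(\O_\B)=(-g+2\lambda+a-21)/3$; substituting $t=-2$ as well pins down $g=(3\lambda+a-31)/2$ and, via integrality of $\chi$, forces $\lambda=17-a$ for $5\leq a\leq 8$. The same vanishing $P_\B(-2)=0$ is what rules out $\lambda\leq 14$ for $a\in\{0,1,2\}$ and leaves only $\lambda=14$ when $a=3$. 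Without this step your argument has no way to produce either the $\chi$-formulas or the lower bounds on $\lambda$, so the proposal as written does not close.
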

\begin{proof}
Denote by $\Lambda\subsetneq C\subsetneq S\subsetneq X\subsetneq \B$ a sequence of 
general linear sections of $\B$ and put  
$h_{\Lambda}(2):=h^0(\PP^6,\O(2))-h^0(\PP^6,\I_{\Lambda}(2))$.
Since $C$ is a nondegenerate curve in $\PP^7$, 
 we have $\lambda\geq 7$. 
By Castelnuovo's argument \cite[Lemma~6.1]{note}, 
it follows that
\begin{equation}\label{eq: castelnuovo-argument-4fold}
7\leq \min\{\lambda,13\}\leq h_{\Lambda}(2)\leq 28 - h^0(\PP^{10},\I_{\B}(2))=17-a 
\end{equation}
and in particular we have $a\leq 10$.
Moreover
\begin{itemize}
 \item if $\lambda\geq 13$, then $h_{\Lambda}(2)\geq 13$ and $a\leq4$, by (\ref{eq: castelnuovo-argument-4fold});
 \item if $\lambda\geq 15$, then $h_{\Lambda}(2)\geq 14$ and $a\leq3$, by Castelnuovo Lemma \cite[Lemma~1.10]{ciliberto-hilbertfunctions};
 \item if $\lambda\geq 17$, then $h_{\Lambda}(2)\geq 15$ and $a\leq2$, by \cite[Theorem~3.1]{ciliberto-hilbertfunctions};
 \item if $\lambda\geq 19$, then $h_{\Lambda}(2)\geq 16$ and $a\leq1$, by \cite[Theorem~3.8]{ciliberto-hilbertfunctions};
 \item if $\lambda\geq 21$, then $h_{\Lambda}(2)\geq 17$ and $a=0$, by \cite[Theorem~2.17(b)]{petrakiev}. 
\end{itemize}
According to the above statements,
we consider the refinement 
$\theta=\theta(\lambda)$ of Castelnuovo's bound $\rho=\rho(\lambda)$, contained in 
 \cite[Theorem~2.5]{ciliberto-hilbertfunctions}.
So, we have
\begin{equation}\label{eq: KBHB3}
K_{\B}\cdot H_{\B}^3=2g-2-3\lambda \leq 2\theta(\lambda)-2-3\lambda\leq 2\rho(\lambda)-2-3\lambda .
\end{equation}
Now, if $t\geq1$, by Kodaira Vanishing Theorem and Serre Duality,
it follows that $P_{\B}(-t)=h^4(\B,\O_{\B}(-t))=h^0(\B,K_{\B}+tH_{\B}) $;
hence, if $P_{\B}(-t)\neq0$, then $K_{\B}+tH_{\B}$ is an effective divisor and 
 we have either $K_{\B}\cdot H_{\B}^3 > -tH_{\B}^4=-t \lambda$ or $K_{\B}\sim -tH_{\B}$.
Thus, by (\ref{eq: KBHB3}) and  
 straightforward calculation, we deduce (see Figure \ref{fig: upperbounds}):
\begin{figure}[htb] 
\centering
\includegraphics[width=0.7\textwidth]{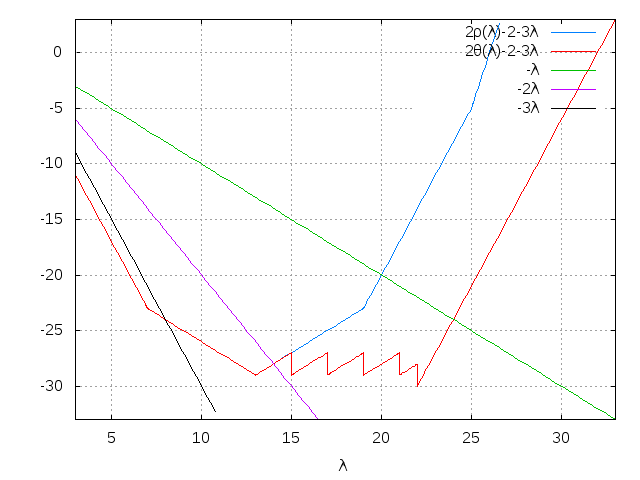} 
\caption{Upper bounds of $K_{\B}\cdot H_{\B}^3$}
\label{fig: upperbounds}
\end{figure}
\begin{enumerate}[(\ref{prop: 4foldnondegenerate}.a)]
 \item\label{case: lambda8}  if $\lambda\leq8$,  then either $P_{\B}(-3)=P_{\B}(-2)=P_{\B}(-1)=0$ or $\lambda=8$ and $K_{\B}\sim -3H_{\B}$;
 \item\label{case: lambda14} if $\lambda\leq14$, then either $P_{\B}(-2)=P_{\B}(-1)=0$ or $\lambda=14$ and $K_{\B}\sim -2H_{\B}$; 
% \item\label{case: lambda20} if $\lambda\leq23$, then either $P_{\B}(-1)=0$ or $\lambda=23$ and $K_{\B}\sim -H_{\B}$.
 \item\label{case: lambda20} if $\lambda\leq24$, then either $P_{\B}(-1)=0$ or $\lambda=24$ and $K_{\B}\sim -H_{\B}$.
\end{enumerate}
In the same way, one also sees that $h^4(\B,\O_{\B})=0$ 
% whenever $\lambda\leq 30$.
whenever $\lambda\leq 31$.
Now we discuss the cases according to the value of $a$.
\begin{case}[$9\leq a\leq 10$] We have $\lambda\leq 8$.
From the classification of del Pezzo varieties 
in \cite[\Rmnum{1} \S 8]{fujita-polarizedvarieties},
we see that the case $\lambda=8$ with $K_{\B}\sim -3H_{\B}$  is impossible and so
we obtain $\lambda=11-2a/5$, $g=1-a/10$, 
by (\ref{prop: 4foldnondegenerate}.\ref{case: lambda8}).
Hence $a=10$, $\lambda=7$, $g=0$ and $\B$ is a rational normal scroll.
\end{case}
\begin{case}[$5\leq a\leq 8$] We have $\lambda\leq 12$.
  By (\ref{prop: 4foldnondegenerate}.\ref{case: lambda14}) we obtain 
 $g=(3\lambda+a-31)/2$ and $\chi(\O_{\B})=(\lambda+a-11)/6$ and,
since  $\chi(\O_{\B})\in \ZZ$,
we obtain $\lambda=17-a$, $g=10-a$, $\chi(\O_{\B})=1$. So,
we can determine the abstract structure of $\B$ by
\cite{fania-livorni-ten},
\cite{besana-biancofiore-deg11},
\cite[Theorem~2]{ionescu-smallinvariantsII},
\cite[Lemmas~4.1 and 6.1]{besana-biancofiore-numerical}
 and  we also deduce that the case $a=8$ does not occur, by \cite{fania-livorni-nine}.
\end{case}
\begin{case}[$a = 4$] We have $\lambda\leq 14$. 
 Again by (\ref{prop: 4foldnondegenerate}.\ref{case: lambda14}), we deduce that either
 $g=(3\lambda-27)/2$ and $\chi(\O_{\B})=(\lambda-7)/6$ or
 $\B$ is a Mukai variety with 
$\lambda=14$ ($g=8$ and $\chi(\O_{\B})=1$).
In the first case, since
$\chi(\O_{\B})\in \ZZ$ and $g\geq 0$,
we obtain $\lambda=13$, $g=6$, $\chi(\O_{\B})=1$ and 
then we can determine the abstract structure of $\B$ by 
\cite[Theorem~1]{ionescu-degsmallrespectcodim} and
\cite[Lemmas~4.1 and 6.1]{besana-biancofiore-numerical}.
In the second case, if $b_2=b_2(\B)=1$ then $\B$ is a linear section 
of $\GG(1,5)\subset\PP^{14}$, 
otherwise $\B$ is a Fano variety of product type, see
 \cite[Theorems~2 and 7]{mukai-biregularclassification}.
\end{case}
\begin{case}[$a=3$] We have $\lambda\leq 16$ and 
$\chi(\O_{\B})=(-g+2\lambda-18)/3$, 
by (\ref{prop: 4foldnondegenerate}.\ref{case: lambda20}).
Moreover, if $\lambda\leq14$, 
by (\ref{prop: 4foldnondegenerate}.\ref{case: lambda14}) it follows
that $\lambda=14$, $g=7$ and $\chi(\O_{\B})=1$.
\end{case}
\begin{case}[$a=2$] We have $\lambda\leq 18$   
and $\chi(\O_{\B})=(-g+2\lambda-19)/3$, 
by (\ref{prop: 4foldnondegenerate}.\ref{case: lambda20}).
Moreover, by (\ref{prop: 4foldnondegenerate}.\ref{case: lambda14}) it follows that
$\lambda\geq 15$.
\end{case}
%  \begin{case}[$a\leq1$] If $\lambda\leq 14$,   
%  by (\ref{prop: 4foldnondegenerate}.\ref{case: lambda14})
%  and (\ref{prop: 4foldnondegenerate}.\ref{case: lambda20}) 
%  it follows that either
%  $a=1$, $\lambda=10$, $g=0$, $\chi(\O_{\B})=0$ or
%  $a=0$, $\lambda=11$, $g=1$, $\chi(\O_{\B})=0$. 
%  The first case is, of course, impossible.
%  In the second case, $\B$ must be an elliptic scroll  
%  and $\varphi$ must be of type $(2,6)$; so, 
%  by (\ref{eq: c2 4-fold}) we obtain the contradiction
%  $c_2(\B)\cdot H_{\B}^2=(990+c_4(\B))/37=990/37\notin\ZZ$.
%  \end{case}
\begin{case}[$a=1$] 
We have $\lambda\leq 20$ and 
$\chi(\O_{\B})=(-g+2\lambda-20)/3$, 
by (\ref{prop: 4foldnondegenerate}.\ref{case: lambda20}).
Moreover, if $\lambda\leq14$, 
by (\ref{prop: 4foldnondegenerate}.\ref{case: lambda14}) it follows
that  $\lambda=10$, $g=0$, $\chi(\O_{\B})=0$,
which is of course impossible.
\end{case}
\begin{case}[$a=0$]
If $\lambda\leq 14$,   
by (\ref{prop: 4foldnondegenerate}.\ref{case: lambda14})
and (\ref{prop: 4foldnondegenerate}.\ref{case: lambda20}) 
it follows that  $\lambda=11$, $g=1$, $\chi(\O_{\B})=0$. 
Thus, $\B$ must be an elliptic scroll  
and $\varphi$ must be of type $(2,6)$; so, 
by (\ref{eq: c2 4-fold}) we obtain the contradiction
$c_2(\B)\cdot H_{\B}^2=(990+c_4(\B))/37=990/37\notin\ZZ$.
\end{case}
\end{proof}
\begin{remark}
Under the hypothesis of Proposition \ref{prop: 4foldnondegenerate},
 reasoning as in Proposition \ref{prop: segre and chern classes}, 
we obtain that
if $\varphi$ is of type $(2,d)$, then
 \begin{eqnarray}
\label{eq: c2 4-fold}  37c_2(\B)\cdot H_{\B}^2-c_4(\B) &=&  -231\lambda+188g+(1-9d)\Delta+3396 ,\\
  37c_3(\B)\cdot H_{\B}+7c_4(\B) &=& 655\lambda-428g+(26d-7)\Delta-5716 .
 \end{eqnarray}
\end{remark}
\begin{remark}
If Eisenbud-Green-Harris Conjecture $I_{11,6}$ holds 
(see \cite{eisenbud-green-harris}),
 then we have that $\lambda\leq 24$,
even in the case with $a=0$.
% of Proposition \ref{prop: 4foldnondegenerate}.
If $a=0$ and $\lambda\leq 24$, we have $g\leq \theta(24)=25$ and 
one of the following cases holds:
\begin{itemize}
 \item $\lambda=24$, $g=25$, $\chi(\O_{\B})=1$ and $\B$ is a Fano variety of coindex $4$; % P_B(t)=(24*t^4+48*t^3+96*t^2+72*t+24)/24 
 \item $g\leq 24$ and $\chi(\O_{\B})=(-g+2\lambda-21)/3$.
\end{itemize}
\end{remark}
\begin{example} 
Note that 
in Proposition \ref{prop: delta mag0 4fold}, all cases  with $\delta>0$ 
really occur (see \S \ref{sec: examples});
when $\delta=0$,  
an example is obtained by taking a
general $4$-dimensional linear section of 
$\PP^1\times\PP^5\subset\PP^{11}\subset\PP^{12}$.
Below we collect some examples of 
special quadratic birational transformations appearing in Proposition
\ref{prop: 4foldnondegenerate}.
\begin{itemize}
\item If $X\subset\PP^{10}$ is a (smooth) $4$-dimensional rational normal scroll, 
then $|\I_{X,\PP^{10}}(2)|$
defines a birational transformation 
$\psi:\PP^{10}\dashrightarrow\GG(1,6)\subset\PP^{20}$ of type $(2,2)$.
\item If $X\subset\PP^{10}$ is a general hyperplane section 
of $\PP^1\times Q^4\subset\PP^{11}$,
then $|\I_{X,\PP^{10}}(2)|$ defines a birational transformation 
$\psi:\PP^{10}\dashrightarrow \overline{\psi(\PP^{10})}\subset\PP^{17}$
of type $(2,2)$ whose image has degree $28$.
\item If $X=\PP(\T_{\PP^2}\oplus\O_{\PP^2}(1))\subset\PP^{10}$, since 
$h^1(X,\O_X)=h^1(\PP^2,\O_{\PP^2})=0$, $|\I_{X,\PP^{10}}(2)|$ defines a birational 
transformation 
$\psi:\PP^{10}\dashrightarrow \overline{\psi(\PP^{10})}\subset\PP^{17}$ 
(see Facts \ref{fact: test K2} and \ref{fact: K2 property}).
\item There exists a smooth linearly normal $4$-dimensional
variety $X\subset\PP^{10}$ with $h^1(X,\O_X)=0$, degree $11$, sectional genus $4$,
having the structure of a quadric fibration over $\PP^1$ 
(see \cite[Remark~3.2.5]{besana-biancofiore-deg11});
thus $|\I_{X,\PP^{10}}(2)|$ defines a birational 
transformation 
$\psi:\PP^{10}\dashrightarrow \overline{\psi(\PP^{10})}\subset\PP^{16}$
(see Facts \ref{fact: test K2} and \ref{fact: K2 property}).
\item If $X\subset\PP^{10}$ is the blow-up of $\PP^4$ at $4$ 
general points $p_1,\ldots,p_4$,
embedded by $|2H_{\PP^4}-p_1-\cdots-p_4|$, then
$|\I_{X,\PP^{10}}(2)|$ defines a birational transformation 
$\psi:\PP^{10}\dashrightarrow \overline{\psi(\PP^{10})}\subset\PP^{15}$
whose image has degree $29$; in this case $\Sec(X)$ is a
complete intersection of two cubics.
\item If $X\subset\PP^{10}$ is a general $4$-dimensional linear section of 
$\GG(1,5)\subset\PP^{14}$, then $|\I_{X,\PP^{10}}(2)|$ defines 
a birational transformation 
$\psi:\PP^{10}\dashrightarrow \overline{\psi(\PP^{10})}\subset\PP^{14}$
of type $(2,2)$ whose image is a complete intersection of quadrics.
\end{itemize}
\end{example}
\subsection*{Acknowledgements}
The author wishes to thank 
Prof. Francesco Russo 
for helpful discussions on this paper.

\bibliographystyle{abbrv}
\bibliography{bibliography.bib}

\end{document}